\theoremstyle{plain}
\newtheorem{theorem}{Theorem}[section]
\newtheorem{lemma}[theorem]{Lemma}
\newtheorem{proposition}[theorem]{Proposition}
\newtheorem{corollary}[theorem]{Corollary}
\theoremstyle{definition}
\newtheorem{remark}[theorem]{Remark}
\newtheorem{conjecture}[theorem]{Conjecture}
\newtheorem{remarks}[theorem]{Remarks}
  \newcommand{\om}{\omega}    
  \newcommand{\sig}{\sigma}   
  \newcommand{\al}{\alpha}
  \newcommand{\del}{\delta}   \newcommand{\Del}{\Delta}
  \newcommand{\gam}{\gamma}   \newcommand{\Gam}{\Gamma}
  \newcommand{\lam}{\lambda} 
  \newcommand{\veps}{\varepsilon}
  \newcommand{\stab}{\operatorname{stab}}
   \newcommand{\eps}{\varepsilon}
  \def\b1{\text{\large 1}}  
  \def\ip<#1>{\langle#1\rangle}   
  \newcommand{\codim}{\operatorname{codim}}
\newcommand{\beqn}{\begin{equation}}
\newcommand{\eeqn}{\end{equation}}
\newcommand{\bc}{\mathbb{C}}
\newcommand{\bp}{\mathbb{P}}
\newcommand{\bz}{\mathbb{Z}}
 \newcommand{\ci}{\mathcal{I}}
 \newcommand{\cl}{\mathcal{L}}
 \newcommand{\co}{\mathcal{O}}
 \newcommand{\cs}{\mathcal{S}}
 \newcommand{\cf}{\mathcal{F}}
\begin{document}

 \numberwithin{theorem}{section}
  \title{On Positivity in $T$-equivariant $K$-theory
of Flag Varieties}
  \author{William Graham and Shrawan Kumar}

  \maketitle

 \begin{abstract} We prove some general results
on the $T$-equivariant $K$-theory $K_T(G/P)$ of the flag variety $G/P$, where $G$ is a semisimple
 complex
algebraic group, $P$ is a parabolic subgroup and $T$ is a maximal torus contained in $P$.
 In particular, we make a conjecture about a positivity
phenomenon in $K_T(G/P)$ for the product of two basis elements
written in terms of the basis of $K_T(G/P)$  given by the dual of the structure sheaf 
(of Schubert varieties) basis.
(For the full flag variety $G/B$, this dual basis is closely related
to the basis given by Kostant-Kumar.)
 This conjecture is parallel to (but different from) the conjecture of
Griffeth-Ram for the structure constants  of the product in the structure sheaf basis. 
We give
explicit expressions for the product in the $T$-equivariant $K$-theory of projective spaces
in terms of these bases.
In particular, we establish our conjecture and the conjecture of Griffeth-Ram in this case.
 \end{abstract}

  \section{Introduction}
Let $X$ denote the partial flag variety $G/P$, where $G$ is a complex semisimple
simply-connected algebraic
group and $P$ is a parabolic subgroup of $G$ containing a fixed Borel subgroup $B$.
The group $B$ acts
with finitely many orbits on $X$, and the closures of these orbits
(called the Schubert varieties) are indexed by $W^P$, the set  of minimal length
coset representatives
 of $W/W_P$ (where $W$ is the Weyl group of $G$ and $W_P$ is the Weyl group of $P$);
the Schubert variety corresponding to $w \in W^P$ is denoted $X^P_w$.
The Poincar\'e duals of the fundamental classes $[X_w^P]$ (called the Schubert
classes) form a basis
for the cohomology
ring $H^*(X)$.  The
structure constants of the multiplication in $H^*(X)$ with respect to this basis
have long been known to be non-negative.

This positivity result has
been generalized in different directions.  If $T$ is a maximal torus of
$B$, then the equivariant cohomology ring $H^*_T(X)$ is a free module
over $H^*_T(\text{pt})$, the equivariant cohomology ring of a point, again with a basis consisting
of Schubert classes.  As proved by Graham \cite{Gra:01}, the structure constants
in this basis again have a positivity property generalizing
the non-equivariant positivity.
Similarly, the Grothendieck group $K(X)$ has a basis consisting
of classes of structure sheaves $[\co_{X_w^P}]$ of Schubert varieties.  As proved by Brion
\cite{Bri:02}, the structure constants of the multiplication in
$K(X)$ have a predictable alternating sign behavior, which we will refer to
as a {\it positivity property}.

The positivity in $H^*_T(X)$ and
the positivity in $K(X)$ each imply the positivity in $H^*(X)$.  Our aim in this
paper is to discuss a positivity property for the multiplication in
the $T$-equivariant Grothendieck group $K_T(X)$ encompassing the positivity both in
$H_T(X)$ and $K(X)$. One subtlety in the $T$-equivariant $K$-theory is that
$K_T(X)$ has two natural quite different
bases: the basis consisting of classes of structure sheaves of Schubert
varieties (called the {\it structure sheaf} basis), and the dual basis with respect
to the natural pairing
on $K_T(X)$ (called the {\it dual structure sheaf} basis).

Surprisingly, both the structure sheaf basis and the dual structure sheaf basis of
$K_T(X)$ seem to exhibit the positivity phenomenon.   Let $R(T)$ denote
the representation ring of $T$, which   is a free abelian group with basis
consisting of the characters $e^{\lambda}$.
Let $\Delta$ denote the set of roots of  $\mbox{Lie }G$ with respect to $\mbox{Lie }T$,
and $\Delta^+$ the set of positive roots (chosen so that these are
the roots of $\mbox{Lie }B$).  Let $\{ \xi^w_P \}$ denote the dual basis to
the $R(T)$-basis $\{ [ \co_{X^P_w} ] \} $ of $K_T(X)$.
Write
$$
 [\co_{X^P_u} ]  [\co_{X^P_v} ] = \sum_{w\in W^P}
b^w_{u,v}(P)  [ \co_{X^P_w} ],
$$
and
 \[
\xi_P^u\, \xi_P^v = \sum_{w\in W^P} p^w_{u,v}(P) \xi_P^w,
  \]
 for (unique) elements $b^w_{u,v}(P)$ and $p^w_{u,v}(P)$ of $R(T)$.
 Griffeth and Ram conjectured a positivity property for the coefficients
 $b^w_{u,v}(P)$.  Specifically, their conjecture asserts that
 \beqn \label{e.intro1}
(-1)^{\dim (X)+\ell (u)+\ell (v)+\ell (w)} b^w_{u,v}(P) \in
\bz_+[e^{\beta}-1]_{\beta\in\Del^+}
  \eeqn
 (see Conjecture \ref{conj.GR} and Remark \ref{r.GR}). The validity of this conjecture
 for $P=B$ implies its validity for any $P$ (cf. Proposition \ref{p=b}).

In this paper we conjecture that the coefficients $p^w_{u,v}(P)$
  also exhibit the following positivity:
  \beqn \label{e.intro2}
(-1)^{\ell (u)+\ell (v)+\ell (w)}\, p^w_{u,v}(P) \in \bz_+[e^{-\beta}
-1]_{\beta\in\Del^+}
  \eeqn
  (see Conjecture \ref{conj.GK}).  It is not clear if the validity of the conjecture
  for $P=B$ implies that for any $P$.  On the other hand, this conjecture
  is compatible with the inclusion of flag varieties associated to Levi subgroups
  (see Proposition \ref{p.compatible}).
  Although the coefficients $b^w_{u,v}(P)$ and $ p^w_{u,v}(P)$ are
  related (see Propositions \ref{p.relation} and \ref{p.4.4}), it is not clear if
  one conjecture implies the other.

   The non-equivariant analogues
  of both these conjectures hold.  More precisely, if $F: R(T) \to \bz$
  is the forgetful map (sending each $e^{\lambda}$ to $1$), then
  $$
  (-1)^{\dim (X)+\ell (u)+\ell (v)+\ell (w)} F( b^w_{u,v}(P) )\ge 0
  $$
  and
  $$(-1)^{\ell (u)+\ell (v)+\ell (w)}\, F(p^w_{u,v}(P) )\ge 0.$$
   The first
  inequality is proved in \cite{Bri:02}.
  The second inequality can be easily deduced from
   \cite[Theorem 1]{Bri:02} (see Remark \ref{r.briondual}).
  In fact, we conjecture that an equivariant generalization
  of \cite[Theorem 1]{Bri:02} holds: Let $T'$ be  a subtorus of $T$.
  If $Y\subset X$ is a $T'$-stable
irreducible subvariety with rational singularities, and we write
  \[
[\co_Y] = \sum_{w\in W^P} a^Y_w [\co_{X^P_w}] ,
  \]
then
  \[
(-1)^{\codim Y +\codim X_w^P}\, a^Y_w\in\bz_+
[e^{-\beta}_{\mid T'} -1]_{\beta\in\Del^+}
\]
(see Conjecture \ref{conj.6.1}).  By Proposition \ref{p.pn.1},
 this would imply Conjecture
\ref{conj.GK}.

  The main purpose of this paper is to prove some results giving
  evidence for Conjectures \ref{conj.GK} and \ref{conj.6.1}.  The most substantial
  results are explicit formulas for the coefficients $p^w_{u,v}(P)$ and $b^w_{u,v}(P)$ in case
  $X = \bp^n$ (see Theorems \ref{t.pn.1} and \ref{t.pn.4}).  From this theorem we
  deduce recurrence relations (Theorems \ref{t.pn.2} and \ref{t.pn.5}) for the coefficients
  $p^w_{u,v}(P)$ and $b^w_{u,v}(P)$ which imply Conjectures \ref{conj.GK} and \ref{conj.GR}
  for $X = \bp^n$.   We also prove that in the case $P = B$,
  Conjecture \ref{conj.GK} holds for the coefficients $p^w_{u,e}$ and $p^w_{u, s}$,
  where $e$ and $s$ are (respectively) the identity element of $W$ and a simple reflection
  (see Proposition \ref{p.positive} and Remark \ref{r.positive}).
We verify Conjecture \ref{conj.6.1} in the case $Y$ is any opposite Schubert variety $X^w_P$
(cf. Proposition \ref{p.6.4} and Remark \ref{7.7}(a)).

A secondary purpose of this paper is to collect various results relating different
bases of $K_T(X)$, and relations among the structure constants
in these bases.   Among the natural bases of $K_T(X)$ are
the structure sheaf basis, the dual structure sheaf basis,
and the basis of the dualizing sheaves of Schubert
varieties.  Also, one can take opposite
Schubert varieties in place of Schubert varieties.
The positivity conjectures have different formulations in terms of these
different bases.
We describe some of the relations between these
bases and structure constants, in the hope that this paper will serve as a useful reference for other
workers in this area.

The contents of the paper are as follows.  Section 1 lays down the basic notation.
Section \ref{s.prelim}
contains some preliminary results on $K_T(G/P)$.  In particular, it identifies the
dual structure sheaf basis and also the basis of the dualizing sheaves of Schubert
varieties (cf. Propositions \ref{p.2.1} and \ref{p.bases}). Section \ref{s.conjecture}
contains the statement of our positivity conjecture (Conjecture \ref{conj.GK}).
We prove the conjecture for the coefficients $p^w_{u,e}$ and $p^w_{u,s}$ (for any
simple reflection $s$) in the case $P=B$ (cf. Proposition \ref{p.positive} and Remark
\ref{r.positive}). We observe that the conjecture has also been verified by an explicit calculation
for any rank $2$ group in the case $P=B$.
By a result of Brion, the nonequivariant analogue of this conjecture holds
(Remark \ref{r.briondual}).   This section also contains
the positivity conjecture of Griffeth and Ram (cf. Conjecture \ref{conj.GR}) and its equivalent
reformulation in terms of the dualizing sheaves (cf. Proposition \ref{3.12}).
It is shown that the validity of the Griffeth-Ram
conjecture for $P=B$ implies its validity for any $P$ (cf. Proposition \ref{p=b}).
Section \ref{s.relations} proves some relations between the structure
constants with respect to the structure sheaf basis and the
 dual structure sheaf basis (cf. Propositions \ref{p.relation} and \ref{p.4.4}).
 Section \ref{s.multiplicative}
proves that the structure constants with respect to either basis in the case $P=B$ lie
in the subring $\bz[e^{-\beta}-1]_{\beta\in\Del^+}$ of $R(T)$
(cf. Theorem \ref{t.structure} and Corollary \ref{5.2}).
Section \ref{s.pn} contains the explicit formula for the structure
constants in the case $X = \bp^n$ in  the
 dual structure sheaf basis, and the recurrence relation
implying Conjecture \ref{conj.GK} in this case (cf. Theorems \ref{t.pn.1},
\ref{t.pn.2} and \ref{t.pn.3}). Similar results are also obtained in the structure sheaf basis.
Section \ref{s.general} contains our more general conjecture
asserting the positivity of the coefficients of the class  of the structure sheaf of a $T'$-stable
subvariety $Y$ of $G/P$ with rational singularities written in terms of the structure sheaf
basis (cf. Conjecture \ref{conj.6.1}), for any subtorus $T'\subset T$.
We prove this conjecture in the special case where $Y$ is any
opposite Schubert variety in any $G/P$ (cf. Proposition \ref{p.6.4} and Remark \ref{7.7}(a)).

We thank M. Brion for some helpful conversations. The first author was
supported by the grant no. DMS-0403838 from NSF and the second author was supported by
the FRG grant no. DMS-0554247 from NSF.
\subsection{Definitions and notation}
We work with schemes over the ground field of complex numbers.

Let $X$ be a smooth algebraic variety with an action of
a torus $T$.  Let $K_T(X)$ denote the Grothendieck group of
$T$-equivariant coherent sheaves on $X$; because $X$ is
smooth, $K_T(X)$ may be identified with the Grothendieck
group of $T$-equivariant vector bundles on $X$.  Thus,
$K_T(X)$ is a ring; we will sometimes write the multiplication
in $K_T(X)$ using the notation of tensor product.
 The class in $K_T(X)$ of a $T$-equivariant coherent
sheaf $\cf$ will be denoted by $[\cf]$.  In particular, if
$Y \subset X$ is a $T$-stable closed subscheme, then
the structure sheaf of $Y$ defines a class $[\co_Y]$
in $K_T(X)$; if $Y $ is Cohen-Macaulay, then its
dualizing sheaf $\om_Y$ defines a class $[\om_Y]$ in $K_T(X)$.
 Let $*: K_T(X)\to
K_T(X)$ denote the standard involution taking a vector bundle to its dual and
$e^{\lam}$ to $e^{-\lam}$.  If $r \in R(T)$, we will sometimes write $\overline{r}$ for
$*r$, where $R(T)$ is the representation ring of $T$.   If $Y \supset Z$ are closed $T$-stable subschemes of $X$, then $\co_Y(-Z)$
is the ideal sheaf of $Z$ in $Y$.  Thus, viewed as an element of $K_T(X)$,
$[\co_Y(-Z)] = [\co_Y] - [\co_Z]$.

Recall that
$R(T)$  is a free abelian group (freely) generated  by the
characters $e^{\lambda}$ of $T$.  If $V$ is any representation
of $T$, we write $\text{ch }V$ for the corresponding element
of $R(T)$ (a linear combination of $e^{\lambda}$).
The group $K_T(X)$ is an $R(T)$-module.
If $X$ is proper, and $\cf$ is a $T$-equivariant coherent sheaf on
$X$, write $h^i(X,\cf) = \text{ch }H^i(X,F)$ and
 \[
\chi (X, \cf) := \sum_{p\geq 0} (-1)^p \text{ch } H^p(X, \cf) \in R(T).
  \]
 We extend this definition to define $\chi(X, \gamma)$ for any
$\gamma \in K_T(X)$.  We write
$\overline{h}^i(X, \cf) = * h^i(X,\cf)$ and
$\overline{\chi}(X, \gamma) = * \chi(X, \gamma)$.
For $X$ proper, there is
a pairing
 $$\ip< \cdot ,\cdot > : K_T(X) \otimes_{R(T)}
K_T(X)\to R(T)$$
given by
  \[
\ip< v_1,v_2> = \chi (X, v_1\otimes v_2).
  \]
  If $\cf$ is supported on a $T$-stable subscheme $Y$, then,
  viewing $\cf$ as a sheaf on $Y$, we have $\chi (X, \cf) = \chi (Y, \cf)$.

 Let $G$ be a semisimple connected simply-connected complex algebraic
group.  For the rest of the paper,  $T$ will denote a maximal
torus of $G$.
   Let $B$ be a Borel subgroup of $G$ containing $T$ and, as above,
let $\Delta$ denote the set of roots and
$\Delta^+$ the set of positive roots, chosen so that the roots
of $\text{Lie }B$ are positive.
Let $\{ \alpha_1, \ldots, \alpha_{\ell} \}\subset \Del^+$ denote the simple roots,
and let $s_i$ denote the simple reflection corresponding to $\alpha_i$.
Let $Q^+ := \sum_i\, \bz_+ \al_i $.
Let $\rho = \frac{1}{2} \sum_{\alpha \in \Delta^+} \alpha$. Let $B^-$ denote
the Borel subgroup of $G$
such that $B\cap B^- =T$.

 Let $P \supset B$ be a (standard) parabolic subgroup of $G$ and let $W_P$ be its Weyl
group.  Let $W^P$ be the set of the minimal length coset representatives
in $W/W_P$.  For $w\in W^P$, let $X^P_w$ (respectively, $X^w_P$) be the
Schubert variety (respectively, the opposite Schubert variety) defined by
  \begin{align*}
X^P_w &= \overline{BwP/P} \subset G/P\\
X^w_P &= \overline{B^-wP/P} \subset G/P.
  \end{align*}
(Here and elsewhere we use the same notation for elements of $W$ and
lifts of those elements to $G$.)  Set
  \[
\partial X_w^P = \bigsqcup_{\substack{v\in W^P\\ v<w}} BvP/P,
  \]
and
  \[
\partial X^w_P = \bigsqcup_{\substack{v\in W^P\\ v>w}} B^-vP/P.
  \]
  Except in Section \ref{s.pn},  we will abbreviate $X_w^B$ and $X^w_B$
  by $X_w$ and $X^w$, respectively.
  If $\lambda$ is a character of $P$ and $\bc_{\lambda}$ is the corresponding
  $1$-dimensional representation of $P$, let $\cl(\lambda)$ denote
  the line bundle $G \times_P \bc_{\lambda^{-1}}$ on $G/P$.

  Given an element $\gamma \in K_T(G/B)$, we write $\gamma(w)$ for the
  pullback of $\gamma$ to $K_T(\{wB\}) = R(T)$.

\section{Preliminary results on $K_T(G/P)$} \label{s.prelim}
Recall that $X^P_w$ and $X^w_P$ are Cohen-Macaulay (cf. \cite[Cor.~3.4.4]{BrKu:05})
and hence their dualizing sheaves $\om_{X^P_w}$ and $\om_{X^w_P}$
make sense.

It is well known that $\{ [ \co_{X^P_w} ] \}_{w\in W^P}$
is a $R(T)$-basis of $K_T(G/P)$, and so is $\{ [
\co_{X^w_P}] \}_{w\in W^P}$.  For any $w\in W^P$, set $\xi^w_P
= [\co_{X^w_P} (- \partial X^w_P)] \in K_T(G/P)$.

The next proposition is known and has been observed for example
by Knutson (see \cite[Section 8]{Buc:02}).

  \begin{proposition} \label{p.2.1}
For any $v,w\in W^P$,
  \[
\langle [\co_{X_w^P} ] , \xi^v_P \rangle = \del_{v,w} ,
  \]
i.e.,  $\{ [ \co_{X^P_w} ] \}_{w\in W^P}$ and $\{
\xi^w_P \}_{w\in W^P}$ are dual bases under the above pairing.
  \end{proposition}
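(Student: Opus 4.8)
The plan is to reduce the pairing to an Euler-characteristic computation on the Richardson variety cut out by $X^P_w$ and the opposite Schubert variety $X^v_P$, and then to evaluate it case by case.

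First I would record the geometric input. Put $R_{v,w} := X^P_w \cap X^v_P$. Since $G/P$ is smooth and both $X^P_w$ and $X^v_P$ are Cohen-Macaulay (as recalled above), they meet in a Tor-independent way once they meet in the expected codimension; and by the standard theory of Richardson varieties, $R_{v,w}$ is reduced and irreducible with rational singularities, of dimension $\ell(w)-\ell(v)$, when $v\le w$, and is empty when $v\not\le w$. In particular the higher Tor-sheaves vanish, so in $K_T(G/P)$ one has $[\co_{X^P_w}]\cdot[\co_{X^v_P}]=[\co_{R_{v,w}}]$. The same applies to $\partial X^v_P$, which is reduced and Cohen-Macaulay and again meets $X^P_w$ properly, giving $[\co_{X^P_w}]\cdot[\co_{\partial X^v_P}]=[\co_{\partial R_{v,w}}]$, where $\partial R_{v,w}:=X^P_w\cap\partial X^v_P=R_{v,w}\cap\partial X^v_P$.

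Next I would assemble the product $[\co_{X^P_w}]\cdot\xi^v_P$. By definition $\xi^v_P=[\co_{X^v_P}(-\partial X^v_P)]=[\co_{X^v_P}]-[\co_{\partial X^v_P}]$, so multiplying by $[\co_{X^P_w}]$ and using the two identities above gives $[\co_{X^P_w}]\cdot\xi^v_P=[\co_{R_{v,w}}]-[\co_{\partial R_{v,w}}]$. Since $R_{v,w}\not\subset\partial X^v_P$ (it contains $vP/P\notin\partial X^v_P$), the subscheme $\partial R_{v,w}$ is a divisor in $R_{v,w}$, and by the ideal-sheaf identity recalled in the notation this equals $[\co_{R_{v,w}}(-\partial R_{v,w})]$. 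Applying $\chi$ and using that Euler characteristics of sheaves supported on a subscheme may be computed there, the pairing becomes $\chi(R_{v,w},\co_{R_{v,w}})-\chi(\partial R_{v,w},\co_{\partial R_{v,w}})$. It then remains to evaluate this in three cases: if $v\not\le w$, then $R_{v,w}=\emptyset$ and the pairing is $0=\delta_{v,w}$; if $v=w$, then $R_{w,w}$ is the single reduced point $\{wP/P\}$ with empty boundary, so the pairing is $\chi(\mathrm{pt},\co)-0=1=\delta_{w,w}$; and in the remaining case $v<w$ one has $\chi(R_{v,w},\co_{R_{v,w}})=1$, since Richardson varieties are connected with vanishing higher cohomology of the structure sheaf, so it suffices to prove $\chi(\partial R_{v,w},\co_{\partial R_{v,w}})=1$.

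The hard part will be this last normalization for the boundary. Here $\partial R_{v,w}=\bigcup_{u}R_{u,w}$ is a union of the lower-dimensional Richardson varieties indexed by the covers $u$ of $v$ with $u\le w$. I would establish $\chi(\partial R_{v,w},\co)=1$ by showing that $\partial R_{v,w}$ is connected with $H^{>0}(\partial R_{v,w},\co)=0$; concretely, I expect to run an induction on $\dim R_{v,w}$ via Mayer--Vietoris for this union of Richardson varieties, whose pairwise intersections are again Richardson varieties, or alternatively to invoke the known Frobenius-splitting vanishing theorems for Richardson varieties compatibly split with their boundary divisors. Controlling the cohomology of the reducible boundary $\partial R_{v,w}$, rather than the elementary case analysis, is the real obstacle.
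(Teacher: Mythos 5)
Your proposal is correct and takes essentially the same route as the paper: reduce the pairing via Brion's proper-intersection lemma to $\chi(X^P_w\cap X^v_P,\co)-\chi(X^P_w\cap\partial X^v_P,\co)$ and then run the case analysis on whether $v\le w$, $v=w$, or $v<w$. The boundary normalization you single out as the real obstacle is not reproved in the paper either; it simply cites Proposition 1 of Brion--Lakshmibai (which applies to the possibly reducible intersection $X^P_w\cap\partial X^v_P$ and gives $\chi=1$ whenever it is nonempty, by Frobenius-splitting arguments of exactly the kind you sketch), so your plan closes along known lines.
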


  \begin{proof}
Since the intersections $X^P_w\cap X^v_P$ and $X_w^P\cap\partial X^v_P$
are proper ($\partial X^v_P$ is also Cohen-Macaulay since it is of pure
codimension 1 in the Cohen-Macaulay variety $X^v_P$), we get (by \cite[Lemma 1]{Bri:02})
  \[
\langle [\co_{X^P_w}], \xi^v_P \rangle = \chi ( G/P,
\co_{X^P_w\cap X^v_P} (-X^P_w\cap \partial X^v_P ) ) .
  \]
By \cite[Proposition 1]{BrLa:03},
  \begin{align*}
\chi ( \co_{X^P_w\cap X^v_P} ) &= 1 \; (\text{or }0)\\
\intertext{according as}
X^P_w\cap X^v_P &\neq \emptyset\; (\text{or }X^P_w\cap X^v_P =\emptyset ).
  \end{align*}
Similarly,
  \begin{align*}
\chi ( \co_{X^P_w\cap \partial X^v_P} ) &= 1 \; (\text{or }0)\\
\intertext{according as}
X^P_w\cap \partial X^v_P &\neq \emptyset\; (\text{or }X^P_w\cap
\partial X^v_P =\emptyset ).
  \end{align*}
Now,
  \begin{align*}
X^P_w\cap X^v_P\neq\emptyset &\Leftrightarrow w\geq v\quad\text{ and}\\
X^P_w\cap\partial X^v_P\neq\emptyset &\Leftrightarrow \text{there
exists a $\theta\in W^P$ such that } w\geq\theta >v\\
&\Leftrightarrow w>v.
  \end{align*}

Combining the above, we get the proposition.
  \end{proof}

Let $\{\tau^w\}_{w\in W}$ be the Kostant-Kumar $R(T)$-basis of $K_T(G/B)$
(cf. \cite[Remark 3.14]{KoKu:90}).  We abbreviate  $\xi^w_B$ by $\xi^w$
(as noted above, $X^B_w$ and $X^w_B$ are abbreviated as
$X_w$ and $X^w$,  respectively).

The next proposition gives some of the relations between various $T$-equivariant
sheaves on $G/B$ and between elements of $K_T(G/B)$.

  \begin{proposition}  \label{p.bases}
  For any $w\in W$
\begin{enumerate}
  \item[(a)] $\om_{X_w} \simeq e^{-\rho} \cl (-\rho )\otimes
\co_{X_w}(-\partial X_w)$ as $T$-equivariant sheaves.
  \item[(b)] $\om_{X^w} \simeq e^{\rho} \cl (-\rho )\otimes \co_{X^w}
  (-\partial X^w)$ as $T$-equivariant sheaves.
  \item[(c)] $*\tau^w = \xi^{w^{-1}} =  e^{-\rho} [\cl (\rho )] [\om_{X^{w^{-1}}} ]$, as elements of
  $K_T(G/B)$.
  \item[(d)] $e^{\rho} [\cl (\rho )](*\tau^w) = (-1)^{\ell (w)}
* [\co_{X^{w^{-1}}} ]$.
  \end{enumerate}
  \end{proposition}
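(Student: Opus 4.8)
The plan is to deduce (d) from (c) by inserting the Grothendieck--Serre duality relation that converts the structure sheaf of a Cohen--Macaulay subvariety into its dualizing sheaf. The only input beyond (c) that is really needed is an identity expressing $*[\co_{X^{w^{-1}}}]$ in terms of $[\om_{X^{w^{-1}}}]$; once this is in hand, (d) becomes pure line-bundle bookkeeping. So first I would establish, for any $T$-stable closed Cohen--Macaulay subvariety $Y\subset G/B$ of pure codimension $c$, the identity in $K_T(G/B)$
\[
*[\co_Y] = (-1)^{c}\,[\cl (2\rho)]\,[\om_Y].
\]

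To prove this identity I would argue as follows. Since $G/B$ is smooth, the involution $*$ sends $[\co_Y]$ to the class of the derived dual $R\mathcal{H}om_{\co_{G/B}}(\co_Y,\co_{G/B})$. Because $Y$ is Cohen--Macaulay of pure codimension $c$, this complex is concentrated in cohomological degree $c$, with the single nonvanishing cohomology sheaf $\mathcal{E}xt^{\,c}_{\co_{G/B}}(\co_Y,\co_{G/B})\cong \om_Y\otimes\om_{G/B}^{-1}$; here I use the defining property $\om_Y\cong \mathcal{E}xt^{\,c}_{\co_{G/B}}(\co_Y,\om_{G/B})$ of the dualizing sheaf, twisted by $\om_{G/B}^{-1}$. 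Passing to $K_T(G/B)$, the cohomological shift contributes the sign $(-1)^{c}$, and I would rewrite $[\om_{G/B}]^{-1}=[\cl (2\rho)]$ using the standard identification $\om_{G/B}\cong \cl(-2\rho)$ (the fiber of $\om_{G/B}$ over $eB$ is $\det(\fg/\fb)^{*}$, which has $T$-weight $\sum_{\alpha\in\Del^{+}}\alpha=2\rho$, matching $\cl(-2\rho)$ with no residual character).

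With the identity available, I would apply it to $Y=X^{w^{-1}}$, which is Cohen--Macaulay by the recollection at the start of this section and has codimension $\codim X^{w^{-1}}=\ell(w^{-1})=\ell(w)$. This gives
\[
*[\co_{X^{w^{-1}}}] = (-1)^{\ell(w)}\,[\cl (2\rho)]\,[\om_{X^{w^{-1}}}].
\]
Part (c) supplies $[\om_{X^{w^{-1}}}]=e^{\rho}[\cl(-\rho)]\,(*\tau^w)$, and substituting this together with the line-bundle relation $\cl(2\rho)\otimes\cl(-\rho)\cong\cl(\rho)$ yields
\[
*[\co_{X^{w^{-1}}}] = (-1)^{\ell(w)}\,e^{\rho}\,[\cl(\rho)]\,(*\tau^w).
\]
Multiplying through by $(-1)^{\ell(w)}$ then produces exactly statement (d).

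The routine parts are the manipulations of line bundles (using $\cl(\mu)\otimes\cl(\nu)\cong\cl(\mu+\nu)$) and of characters. I expect the genuine point of care to be the equivariant bookkeeping in the duality identity: getting the sign $(-1)^{c}$ from the cohomological degree shift correct, and confirming that $\om_{G/B}\cong\cl(-2\rho)$ holds on the nose, with no extra factor $e^{\lambda}$, so that the factor $[\cl(2\rho)]$ combines cleanly with the $e^{\rho}[\cl(-\rho)]$ coming from (c). It is also worth being explicit that the Cohen--Macaulay hypothesis is precisely what licenses the single-$\mathcal{E}xt$ form of the duality, and that the derived-dual interpretation of $*$ is the correct extension of the vector-bundle involution to the class $[\co_{X^{w^{-1}}}]$.
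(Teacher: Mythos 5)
Within its declared scope --- deducing (d) from (c) --- your argument is correct and follows the same route as the paper. The paper's proof of (d) invokes the identity $*[\co_Y] = (-1)^{\codim Y}[\om_Y]\cdot *[\om_{G/B}]$ for $T$-stable Cohen--Macaulay $Y\subset G/B$, citing \cite[\S2]{Bri:02} with the remark that it holds equivariantly, and then combines it with (c) and $\om_{G/B}\cong\cl(-2\rho)$ exactly as you do. Since $*[\om_{G/B}]=[\cl(2\rho)]$, your identity $*[\co_Y]=(-1)^c[\cl(2\rho)][\om_Y]$ is the same statement, and your derivation of it --- the derived dual $R\mathcal{H}om_{\co_{G/B}}(\co_Y,\co_{G/B})$ concentrated in degree $c$ with $\mathcal{E}xt^{\,c}(\co_Y,\om_{G/B})\cong\om_Y$ --- is precisely the proof of the cited fact, so you have made the paper's argument self-contained rather than replaced it. Your bookkeeping checks out: $\codim X^{w^{-1}}=\ell(w^{-1})=\ell(w)$, the fiber of $\cl(-2\rho)$ at $eB$ has $T$-weight $2\rho$ matching $\det(\fg/\fb)^*$ with no residual character, and $[\cl(2\rho)]\cdot e^{\rho}[\cl(-\rho)]=e^{\rho}[\cl(\rho)]$ gives (d) on the nose.

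The one genuine shortfall is scope: the statement under review is the full four-part proposition, and you take (a), (b), and (c) as given. These are not routine. The paper proves (a) by starting from Ramanathan's non-equivariant isomorphism $\om_{X_w}\simeq\cl(-\rho)\otimes\co_{X_w}(-\partial X_w)$ \cite[Theorem 4.2]{Ram:87} and then pinning down the equivariant structure by computing the $T$-weight $w\rho-\rho$ of the fiber of $\om_{X_w}$ at the smooth fixed point $wB$ (using uniqueness of equivariant structures on a rank-one reflexive sheaf normalized at a fixed point); (b) is similar; and (c) rests on the Kostant--Kumar pairing computation $\chi(X_{v^{-1}}, *\tau^w)=\delta_{v,w}$ \cite[Proposition 3.39]{KoKu:90} together with Proposition \ref{p.2.1}, plus (b) for the second equality. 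In particular, the input $[\om_{X^{w^{-1}}}]=e^{\rho}[\cl(-\rho)](*\tau^w)$ that your computation consumes is exactly the content of (b) and (c), so as a proof of the proposition as stated your attempt is incomplete; as a proof of (d) alone it is complete and correct.
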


  \begin{proof}  By \cite[Theorem 4.2]{Ram:87}, as non-equivariant sheaves,
$$\om_{X_w}\simeq \cl (-\rho )\otimes\co_{X_w}(-\partial X_w).$$  We now
determine $\om_{X_w}$ as a $T$-equivariant sheaf.  Since $BwB/B$ is a
smooth open subset of $X_w$, $\om_{X_w} |_{(BwB/B)}$ is the canonical line
bundle.  The fiber of $\om_{X_w}$ at the $T$-fixed point $wB\in BwB/B$ as
a $T$-module is given by the character $|\Del^-\cap
w\Del^+|= \sum_{\al\in\Del^-\cap w\Del^+} \al =w\rho -\rho$.
The fiber of $\cl (-\rho )$ at $wB$ has weight $w\rho$ and clearly the
fiber of $\co_{X_w}(-\partial X_w)$ at $wB$ has weight 0.  Combining the
above, we get (a).  (Here we have used the fact that on a reflexive sheaf
$\cs$ of rank 1 on an irreducible projective $T$-variety $X$, there exists
at most one $T$-equivariant structure such that the induced $T$-module
structure on the stalk of $\cs$ at a $T$-fixed point $x_0\in X$ is
trivial.)

The proof of (b) is similar.

By \cite[Proposition 3.39]{KoKu:90}, for any $v,w\in W$,
  \beqn
\chi ( X_{v^{-1}}, * \tau^w ) = \langle \co_{X_{v^{-1}}},
* \tau^w \rangle = \del_{v,w} .
  \eeqn
By the preceding proposition, this implies that $*\tau^w = \xi^{w^{-1}}$,
proving the first equality of (c).  The second equality of (c) follows
from (b) and the definition of $\xi^w$.  By
 \cite[\S2]{Bri:02} (which holds equivariantly),
 for any closed $T$-stable Cohen-Macaulay subvariety
 $Y \subset G/B$, we have
 $$
 *[\co_Y] = (-1)^{\codim Y} [\om_Y] \cdot *[\om_{G/B}].
 $$
Part (d) follows by combining (c) with this equation for $Y = X^{w^{-1}}$,
using the fact that $\om_{G/B} \cong \cl(-2 \rho)$.
  \end{proof}

  \section{A positivity conjecture for $K_T(G/P)$} \label{s.conjecture}
  \subsection{Positivity in the dual Schubert basis} \label{ss.conjecturedual}
  We make the following conjecture concerning the multiplication in
  $K_T(G/P)$ in terms of the basis $\{ \xi^w_P \}$.

    \begin{conjecture} \label{conj.GK}
For any (standard) parabolic subgroup $P$ and any $u,v\in W^P$, express
  \beqn \label{e.definep}
\xi^u_P\, \xi^v_P = \sum_{w\in W^P} p^w_{u,v}(P)\, \xi^w_P,
  \eeqn
for some (unique) $p^w_{u,v}(P) \in R(T)$.  Then,
  \[
(-1)^{\ell (u)+\ell (v)+\ell (w)}\, p^w_{u,v}(P) \in \bz_+[e^{-\beta}
-1]_{\beta\in\Del^+} ,
  \]
  where the notation $ \bz_+[e^{-\beta}
-1]_{\beta\in\Del^+}$ means polynomials in $\{e^{-\beta}
-1\}_{\beta\in\Del^+}$ with coefficients in $\bz_+$.
  \end{conjecture}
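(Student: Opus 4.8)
The plan is to reduce to the stronger Conjecture \ref{conj.6.1} and prove the latter by an equivariant refinement of Brion's resolution-and-vanishing argument. The direct handle on the coefficients is the pairing: by the duality of Proposition \ref{p.2.1}, pairing the defining relation \eqref{e.definep} against $[\co_{X^P_w}]$ gives
\[
p^w_{u,v}(P)\;=\;\langle \xi^u_P\,\xi^v_P,\,[\co_{X^P_w}]\rangle
\;=\;\chi\bigl(G/P,\ \xi^u_P\otimes\xi^v_P\otimes[\co_{X^P_w}]\bigr),
\]
with $\xi^u_P=[\co_{X^u_P}(-\partial X^u_P)]$ and likewise for $v$. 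For $P=B$, Theorem \ref{t.structure} already places $p^w_{u,v}(B)$ in the subring $\bz[e^{-\beta}-1]_{\beta\in\Del^+}$, so the entire remaining content is the \emph{sign} of the coefficients, namely that $(-1)^{\ell(u)+\ell(v)+\ell(w)}p^w_{u,v}$ is a non-negative, rather than arbitrary integral, combination of monomials in the $e^{-\beta}-1$.

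The combinatorial obstruction is visible already in this formula. The factor $\xi^v_P\otimes[\co_{X^P_w}]$ pairs one opposite Schubert variety with one Schubert variety; these meet properly, their intersection being a Richardson variety $Z$, which is reduced, Cohen--Macaulay and has rational singularities, so that $\xi^v_P\otimes[\co_{X^P_w}]=[\co_Z(-\partial Z)]$. The remaining factor $\xi^u_P$ involves the opposite Schubert variety $X^u_P$, built from the \emph{same} $B^-$, and so does not meet $Z$ transversally; there is no general-position translation available that preserves $T$-equivariance. This is exactly the situation Conjecture \ref{conj.6.1} is designed to bypass: it controls the structure-sheaf expansion of $[\co_Y]$ for an arbitrary $T'$-stable $Y$ with rational singularities, with no transversality hypothesis. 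I would therefore hand the problem off to Conjecture \ref{conj.6.1}, invoking Proposition \ref{p.pn.1} for the implication Conjecture \ref{conj.6.1} $\Rightarrow$ Conjecture \ref{conj.GK}.

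To prove Conjecture \ref{conj.6.1}, take a $T'$-equivariant resolution $\pi\colon\widetilde Y\to Y$; since $Y$ has rational singularities, $R\pi_*\co_{\widetilde Y}=\co_Y$, whence $[\co_Y]=\pi_*[\co_{\widetilde Y}]$ in $K_{T'}(G/P)$. Expanding along a Bott--Samelson filtration of $\widetilde Y$, the coefficient $a^Y_w$ becomes the equivariant Euler characteristic of a line bundle on a Schubert variety, twisted by its boundary divisor. An equivariant Kawamata--Viehweg vanishing theorem should force all but one cohomology group to vanish; this simultaneously pins the sign $(-1)^{\codim Y+\codim X^P_w}$ and identifies $a^Y_w$ with the $T'$-character of the single surviving space $H^i$. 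The desired membership in $\bz_+[e^{-\beta}_{\mid T'}-1]_{\beta\in\Del^+}$ would then be read off from the $B^-$-module structure near the fixed point $wP$, the factors $1-e^{-\beta}$ arising from the unipotent directions transverse to the Schubert cell.

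The hard part will be precisely this last step: upgrading the sign of an Euler characteristic to the full non-negative-combination statement for the surviving module. Non-equivariantly Brion needs only that $\chi$ has a definite sign---a pure vanishing statement---whereas equivariantly one must show that the whole $R(T')$-valued character lies in $\bz_+[e^{-\beta}_{\mid T'}-1]$, and vanishing theorems control the cohomological degree without revealing the internal weight decomposition of $H^i$ in the required positive form. I expect this to require an explicit Demazure-type character formula compatible with the boundary twist, together with an induction on $\ell(v)$ using multiplication by the divisor classes $\xi^s_P$, mirroring the proof of the cases $p^w_{u,e}$ and $p^w_{u,s}$ in Proposition \ref{p.positive}, to propagate the positivity from the known base cases.
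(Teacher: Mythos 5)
The statement you set out to prove is Conjecture \ref{conj.GK}, which the paper itself does not prove: it offers only partial evidence, namely Theorem \ref{t.structure} (membership of $p^w_{u,v}$ in the subring $\bz[e^{-\beta}-1]_{\beta\in\Del^+}$, with no control of signs), Proposition \ref{p.positive} and Remark \ref{r.positive} (the cases $v=e$ and $v=s_i$ for $P=B$), a rank-$2$ verification, and the complete case $G/P=\bp^n$ via the explicit formula and recursion of Theorems \ref{t.pn.1}, \ref{t.pn.2} and \ref{t.pn.3}. Your opening reduction is correct and coincides exactly with the paper's observation: by Proposition \ref{p.pn.1}, Conjecture \ref{conj.6.1} applied with $G\times G$ in place of $G$, $T'$ the diagonal torus, and $Y=D(X^P_w)$ (which has rational singularities since $X^P_w$ does) implies Conjecture \ref{conj.GK}, the signs matching because $\codim D(X^P_w)+\codim\bigl(X^P_u\times X^P_v\bigr)\equiv \ell(u)+\ell(v)+\ell(w)\pmod{2}$. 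You are also right that, given Theorem \ref{t.structure}, what remains is precisely the positivity of signs.

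The genuine gap is your claimed proof of Conjecture \ref{conj.6.1}, which in this paper remains a conjecture, verified only for $W$-translates of Schubert varieties (hence for opposite Schubert varieties, and for all of $\bp^n$) by the elementary induction of Propositions \ref{p.alpha} and \ref{p.6.3}, Lemma \ref{l.6.4} and Proposition \ref{p.6.4} --- an argument using the relation $[\co_{s_iX_w}]=e^{-\al_i}[\co_{X_w}]-(e^{-\al_i}-1)[\co_{X_{s_iw}}]$ and translation by Weyl group elements, with no resolutions or vanishing theorems. Concretely, two steps of your sketch fail as stated: (a) a ``Bott--Samelson filtration of $\widetilde Y$'' does not exist for an arbitrary $T'$-stable $Y$ with rational singularities --- Bott--Samelson desingularizations are special to Schubert varieties --- so the reduction of $a^Y_w=\chi\bigl(G/P,[\co_Y]\cdot\xi^w_P\bigr)$ to an Euler characteristic of a boundary-twisted line bundle on a Schubert variety is unsupported, the more so because $Y$ and $X^w_P$ need not meet properly and, as you yourself note, there is no $T$-equivariant moving lemma; and (b) even granting such a reduction, an equivariant Kawamata--Viehweg-type vanishing theorem can at best pin down the cohomological degree and hence the sign of $\chi$, whereas the conjecture demands that the entire $R(T')$-valued character lie in $\bz_+[e^{-\beta}|_{T'}-1]_{\beta\in\Del^+}$ --- a statement about the internal weight decomposition that, as your final paragraph concedes, your argument does not produce. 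Since that missing step is exactly the open content of the conjecture, your proposal is a plausible research program (and its vanishing-theorem flavor is a sensible direction), but it is not a proof, and nothing in the paper's own machinery closes the gap beyond the special cases listed above.
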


 We will write simply $p_{u,v}^w$ for $p_{u,v}^w(B)$.

 \begin{remarks} \label{r.conjecture}
  \begin{enumerate}
   \item[(a)] By an explicit case by case calculation, we have verified the
validity of the above conjecture for $P=B$ and any rank-2 group $G$.
  \item[(b)] We show the validity of our conjecture when $G=SL_{n+1}$ and
   $P$ is the standard maximal parabolic subgroup corresponding
to the first node (so that $G/P=\bp^n$) in Section \ref{s.pn}.
  \end{enumerate}
  \end{remarks}

The next proposition gives a relation between structure constants under the inclusion of 
flag varieties associated to Levi subgroups.  In the special case where the flag 
varieties are projective spaces, this result also follows from our explicit calculation 
of the structure constants (see Corollary \ref{c.pn.4}).

\begin{proposition} \label{p.compatible} Let $G,P,T$ be as in the above conjecture and 
let $L$ be the Levi subgroup of $P$ containing $T$. Let $Q$ be a standard parabolic 
subgroup of 
$G$ contained in $P$ and let $Q_L:=L\cap Q$ be the corresponding parabolic subgroup of 
$L$.
  Then, for any $u,v,w \in (W_P)^{Q_L}$,
\[p_{u,v}^w(Q_L)=p_{u,v}^w(Q),\]
where $p_{u,v}^w(Q_L)$ are the structure constants for the flag variety $L/Q_L$.
(Observe that  $(W_P)_{Q_L}$ can canonically be identified with $W_Q$ and $(W_P)^{Q_L}$
is canonically embedded in $W^Q$.)
\end{proposition}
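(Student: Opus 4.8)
The plan is to realize the smaller flag variety as a fiber of the projection $\pi\colon G/Q\to G/P$ and to transport the computation of structure constants from $G/Q$ to that fiber by means of the projection formula. Since the unipotent radical $U_P$ of $P$ lies in $B\subset Q$, one has $P=LQ$, so the orbit map induces a $T$-equivariant isomorphism $L/Q_L\simto P/Q$ identifying $L/Q_L$ with the fiber $\pi^{-1}(eP)=P/Q$; let $\iota\colon L/Q_L\cong P/Q\hookrightarrow G/Q$ be the resulting $T$-equivariant closed embedding. On the level of Weyl groups, $(W_P)^{Q_L}=W_P\cap W^Q$ sits inside $W^Q$, and I will use that the Bruhat order of $W^Q$ restricts, on this subset, to the intrinsic Bruhat order of $L/Q_L$.

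First I would treat the Schubert (covariant) side. For $z\in(W_P)^{Q_L}$ the element $z$ lies in $W_P$, so $z^{-1}U_Pz=U_P\subset Q$ and hence, writing $B=B_LU_P$ with $B_L=B\cap L$, one gets $BzQ=B_LzQ\subset P$. Taking closures shows that the Schubert variety $X^Q_z=\overline{BzQ/Q}$ is contained in $P/Q$ and is carried by $\iota$ precisely onto the Schubert variety $X^{Q_L}_z$ of $L/Q_L$. Since both are reduced, this gives $[\co_{X^Q_z}]=\iota_*[\co_{X^{Q_L}_z}]$ in $K_T(G/Q)$.

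The heart of the argument is the dual (contravariant) side: for $u\in(W_P)^{Q_L}$ I claim $\iota^*\xi^u_Q=\xi^u_{Q_L}$. To see this I would trivialize $\pi$ over the opposite big cell $U_P^-\cdot eP\subset G/P$, where $U_P^-$ is the unipotent radical of the opposite parabolic and $\pi^{-1}(U_P^-\cdot eP)\cong U_P^-\times(P/Q)$. Writing $B^-=U_P^-B_L^-$ and using $u^{-1}U_P^-u=U_P^-$ for $u\in W_P$, the open cell $B^-uQ/Q$ becomes $U_P^-\times(B_L^-uQ/Q)$ under this trivialization; more importantly, a cell $B^-vQ/Q$ of $\partial X^u_Q$ meets $\pi^{-1}(U_P^-\cdot eP)$ if and only if its image in $W^P$ is trivial, i.e. if and only if $v\in(W_P)^{Q_L}$. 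Consequently, over the big cell the pair $(X^u_Q,\partial X^u_Q)$ is the product $U_P^-\times(X^u_{Q_L},\partial X^u_{Q_L})$. The fiber $P/Q=\{e\}\times(P/Q)$ is a transverse slice of this product, so the derived restriction $\iota^*$ computes honest intersections with no higher $\mathrm{Tor}$, yielding $\iota^*[\co_{X^u_Q}]=[\co_{X^u_{Q_L}}]$ and $\iota^*[\co_{\partial X^u_Q}]=[\co_{\partial X^u_{Q_L}}]$, hence $\iota^*\xi^u_Q=\xi^u_{Q_L}$. I expect this step — establishing the product structure of the opposite Schubert varieties over the big cell, together with the observation that only the strata indexed by $(W_P)^{Q_L}$ survive the restriction, and the attendant Bruhat-order compatibility — to be the main obstacle.

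Finally I would assemble the two halves. For $u,v,z\in(W_P)^{Q_L}$, duality of the bases $\{[\co_{X^Q_w}]\}$ and $\{\xi^w_Q\}$ (Proposition \ref{p.2.1}) gives $p^z_{u,v}(Q)=\langle \xi^u_Q\,\xi^v_Q,\,[\co_{X^Q_z}]\rangle_{G/Q}$. Substituting $[\co_{X^Q_z}]=\iota_*[\co_{X^{Q_L}_z}]$ and applying the projection formula together with the identity $\chi(G/Q,\iota_*(-))=\chi(L/Q_L,-)$ for sheaves supported on the fiber, and then using that $\iota^*$ is a ring homomorphism with $\iota^*\xi^u_Q=\xi^u_{Q_L}$, I obtain $p^z_{u,v}(Q)=\langle \xi^u_{Q_L}\,\xi^v_{Q_L},\,[\co_{X^{Q_L}_z}]\rangle_{L/Q_L}=p^z_{u,v}(Q_L)$. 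The point of routing the computation through the triple product in this order is that I only ever need $\iota^*$ on the classes $\xi^w_Q$ with $w\in(W_P)^{Q_L}$, since the Schubert class $[\co_{X^Q_z}]$ that I push forward is already supported in the fiber; the potentially complicated restrictions $\iota^*\xi^w_Q$ for $w\notin(W_P)^{Q_L}$ never enter.
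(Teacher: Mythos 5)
Your proposal is correct, and its skeleton coincides with the paper's: both arguments run through the $T$-equivariant closed embedding $\iota\colon L/Q_L\cong P/Q\hookrightarrow G/Q$, the fact that it carries $X^{Q_L}_w$ isomorphically onto $X^Q_w$, and the key identity $\iota^*\xi^u_Q=\xi^u_{Q_L}$. The difference lies in how that identity is proved and how the endgame is organized. The paper disposes of the pullback claim in two lines by duality: for $u\in(W_P)^{Q_L}$ one has $\chi\bigl(X_u^{Q_L},\iota^*\xi^w_Q\bigr)=\chi\bigl(X_u^Q,\xi^w_Q\bigr)=\delta_{u,w}$ (projection formula plus $\iota_*[\co_{X_u^{Q_L}}]=[\co_{X_u^Q}]$), and since $\{\xi^w_{Q_L}\}$ is dual to the basis $\{[\co_{X_u^{Q_L}}]\}$ this determines $\iota^*\xi^w_Q$ for \emph{every} $w\in W^Q$: it is $\xi^w_{Q_L}$ for $w\in(W_P)^{Q_L}$ and $0$ otherwise. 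The paper then applies the ring homomorphism $\iota^*$ to the expansion of $\xi^u_Q\,\xi^v_Q$ and compares coefficients, a step for which the vanishing half of the claim is essential. You instead prove the identity geometrically --- trivializing $\pi$ over the opposite big cell, checking that exactly the strata indexed by $(W_P)^{Q_L}$ survive, that the pair $(X^u_Q,\partial X^u_Q)$ becomes $U_P^-\times(X^u_{Q_L},\partial X^u_{Q_L})$ there (your use of $W_P$ normalizing $U_P^\pm$ and of the restriction of the Bruhat order to $W_P$ is correct), and that flatness over $U_P^-$ kills the higher Tor's against the slice --- and you then route the conclusion through the pairing $p^z_{u,v}(Q)=\langle\xi^u_Q\,\xi^v_Q,[\co_{X^Q_z}]\rangle$, so that the pullbacks $\iota^*\xi^w_Q$ for $w\notin(W_P)^{Q_L}$ are never needed. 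What each buys: the paper's duality computation is far shorter and yields the vanishing statement for free; your transverse-slice argument is heavier but establishes a sheaf-level, Tor-free statement rather than a mere $K$-class identity, and your ordering of the final triple-product computation neatly sidesteps the coefficient comparison. Note, amusingly, that the pairing computation you perform at the end is essentially the same computation the paper uses to prove its claim in the first place, so the two proofs are dual rearrangements of the same identities, with your geometric proof of $\iota^*\xi^u_Q=\xi^u_{Q_L}$ as the genuinely new ingredient.
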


\begin{proof} Observe that the canonical inclusion $i: L/Q_L\hookrightarrow G/Q$ takes 
the 
Schubert variety $X_w^{Q_L} \subset L/Q_L$ isomorphically onto the Schubert variety 
$X_w^{Q} \subset G/Q$, for any $w \in (W_P)^{Q_L}$. For $w \in W^{Q}$, we claim that 
$i^*(\xi^{w}_{Q})$ equals $\xi^{w}_{Q_L}$ if $w \in (W_P)^{Q_L}$, and is $0$ otherwise.  
Indeed, for $u \in (W_P)^{Q_L}$, 
$$ \chi(X_u^{Q_L}, i^*(\xi^{w}_{Q})) = \chi(X_u^{Q}, 
\xi^{w}_{Q} )
 = \delta_{u,w},
 $$
 proving the claim.

 For $u,v \in (W_P)^{Q_L}$, we have
$$
\xi^u_{Q_L}\, \xi^v_{Q_L} = \sum_{w\in (W_P)^{Q_L}} p^w_{u,v}(Q_L)\, \xi^w_{Q_L}.
$$
On the other hand, since $i^*$ is a  ring homomorphism,
we have
$$
\xi^u_{Q_L}\, \xi^v_{Q_L} = i^* (\xi^u_{Q}\, \xi^v_{Q}) =
i^* (\sum_{w \in W^{Q}} p^{w}_{u,v}(Q)\, \xi^w_{Q} ) = \sum_{w\in (W_P)^{Q_L}}
p^w_{u,v}(Q)\, \xi^w_{Q_L}.
$$
Comparing these two expressions, we get the proposition.
\end{proof}

  \begin{lemma}
  Let $\pi: G/B \to G/P$ denote the projection. Then
  \[
\pi^*(\xi^v_P) = \sum_{u\in vW_P}\xi^u, \text{  for any }v\in W^P.
  \]
  \end{lemma}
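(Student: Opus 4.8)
The plan is to expand $\pi^*(\xi^v_P)$ in the dual structure sheaf basis $\{\xi^u\}_{u\in W}$ of $K_T(G/B)$ and to pin down the coefficients by pairing against the structure sheaf basis. Writing $\pi^*(\xi^v_P)=\sum_{u\in W}c_u\,\xi^u$ (a finite sum, since $W$ is finite), Proposition \ref{p.2.1} applied to $G/B$ gives at once
\[
c_u=\langle [\co_{X_u}],\,\pi^*(\xi^v_P)\rangle .
\]
So it suffices to evaluate this pairing for every $u\in W$.

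Next I would move the computation down to $G/P$ using the projection. Since $H^*(G/B,-)$ is computed from $H^*(G/P,R\pi_*-)$, the Euler characteristic is unchanged under pushforward, i.e. $\chi(G/B,\gamma)=\chi(G/P,\pi_*\gamma)$ for any $\gamma\in K_T(G/B)$. Combining this with the projection formula $\pi_*\big([\co_{X_u}]\otimes\pi^*(\xi^v_P)\big)=\pi_*[\co_{X_u}]\otimes\xi^v_P$ yields
\[
c_u=\chi\big(G/B,[\co_{X_u}]\otimes\pi^*(\xi^v_P)\big)=\chi\big(G/P,\pi_*[\co_{X_u}]\otimes\xi^v_P\big)=\langle \pi_*[\co_{X_u}],\,\xi^v_P\rangle .
\]

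The key geometric input is the pushforward computation $\pi_*[\co_{X_u}]=[\co_{X^P_{\bar u}}]$, where $\bar u\in W^P$ denotes the minimal length representative of the coset $uW_P$. This follows because $\pi$ restricts to a surjection $X_u\to X^P_{\bar u}$ whose fibers are connected with rational singularities (they are unions of Schubert varieties sitting inside copies of $P/B$), so that $\pi_*\co_{X_u}=\co_{X^P_{\bar u}}$ and $R^i\pi_*\co_{X_u}=0$ for $i>0$; this rests on the normality and rationality of singularities of Schubert varieties together with Kempf-type vanishing. I expect this pushforward statement to be the main point needing care, and I would invoke the standard cohomological triviality of the restricted projection $\pi|_{X_u}$ rather than reprove it from scratch.

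Granting this, Proposition \ref{p.2.1} for $G/P$ gives $c_u=\langle [\co_{X^P_{\bar u}}],\,\xi^v_P\rangle=\delta_{\bar u,v}$. Finally, $\bar u=v$ holds exactly when $uW_P=vW_P$, that is, when $u\in vW_P$ (using that $v\in W^P$ is itself the minimal representative of $vW_P$). Hence $c_u=1$ for $u\in vW_P$ and $c_u=0$ otherwise, and summing over $u\in W$ gives
\[
\pi^*(\xi^v_P)=\sum_{u\in vW_P}\xi^u,
\]
as claimed.
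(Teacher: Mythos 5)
Your proof is correct and takes essentially the same route as the paper: both pin down the coefficients via the adjunction $\langle \pi^*(\xi^v_P), [\co_{X_u}] \rangle = \langle \xi^v_P, \pi_*[\co_{X_u}] \rangle$, the pushforward identity $\pi_*[\co_{X_u}]=[\co_{X^P_{\bar u}}]$ (for which the paper simply cites \cite[Theorem 3.3.4(a)]{BrKu:05}, the cohomological triviality of $\pi|_{X_u}$ that you sketch), and two applications of Proposition \ref{p.2.1}.
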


\begin{proof} We have
$\langle \pi^*(\xi^v_P), [\co_{X_u}] \rangle = \langle \xi^v_P, \pi_*[\co_{X_u}] \rangle$.
Further, $\pi_*[\co_{X_u}]=[\co_{\pi(X_u)}]$ by \cite[Theorem 3.3.4(a)]{BrKu:05}.
Thus, by Proposition \ref{p.2.1}, $\langle \xi^v_P, \pi_*[\co_{X_u}] \rangle$
 is $0$ unless $\pi(X_u)= X^P_v$, and this holds
if and only if $u \in v W_P$.  The lemma follows from this, together
with Proposition \ref{p.2.1} applied to the case of $G/B$.
\end{proof}

 Unlike Conjecture \ref{conj.GR} below due to Griffeth-Ram, the validity of the above conjecture
for $P=B$ does not seem to give the validity of the conjecture for an
arbitrary (standard) parabolic $P$.  In fact, we have the following proposition
relating the structure constants for $P$ and $B$.

\begin{proposition} For any $u,v,w \in W^P,$
  \[
p^w_{u,v}(P) = \sum_{\substack{u'\in uW_P\\ v'\in vW_P}} p^w_{u',v'}(B).
  \]
\end{proposition}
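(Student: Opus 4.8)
The plan is to compute the pullback $\pi^*(\xi^u_P\,\xi^v_P)$ along the projection $\pi\colon G/B\to G/P$ in two different ways and then compare coefficients. Since $\pi^*$ is an $R(T)$-algebra homomorphism, the whole argument rests on the preceding lemma, which expresses $\pi^*(\xi^v_P)$ in the dual basis $\{\xi^u\}_{u\in W}$ of $K_T(G/B)$. (Recall that $\{\xi^{w'}\}_{w'\in W}$ is indeed an $R(T)$-basis of $K_T(G/B)$ by the case $P=B$ of Proposition~\ref{p.2.1}, so comparing coefficients is legitimate.)

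First I would apply $\pi^*$ to the defining relation \eqref{e.definep}. Using the $R(T)$-linearity of $\pi^*$ together with the lemma, this gives
$$\pi^*(\xi^u_P\,\xi^v_P) = \sum_{w\in W^P} p^w_{u,v}(P)\,\pi^*(\xi^w_P) = \sum_{w\in W^P} p^w_{u,v}(P)\sum_{w'\in wW_P}\xi^{w'}.$$
The crucial observation is that the cosets $\{wW_P\}_{w\in W^P}$ partition $W$, so each basis element $\xi^{w'}$ (for $w'\in W$) occurs in exactly one of the inner sums. Consequently, for the unique $w\in W^P$ with $w'\in wW_P$, the coefficient of $\xi^{w'}$ on the right-hand side is precisely $p^w_{u,v}(P)$; in particular, taking $w'=w\in W^P$ (its own minimal coset representative), the coefficient of $\xi^w$ equals $p^w_{u,v}(P)$.

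Second, since $\pi^*$ is a ring homomorphism, I would instead write $\pi^*(\xi^u_P\,\xi^v_P)=\pi^*(\xi^u_P)\,\pi^*(\xi^v_P)$, apply the lemma to each factor, and expand each product $\xi^{u'}\xi^{v'}$ via the structure constants for $G/B$, obtaining
$$\pi^*(\xi^u_P)\,\pi^*(\xi^v_P) = \Bigl(\sum_{u'\in uW_P}\xi^{u'}\Bigr)\Bigl(\sum_{v'\in vW_P}\xi^{v'}\Bigr) = \sum_{\substack{u'\in uW_P\\ v'\in vW_P}}\ \sum_{w'\in W} p^{w'}_{u',v'}(B)\,\xi^{w'}.$$
Extracting the coefficient of $\xi^w$ for $w\in W^P$ here yields $\sum_{u'\in uW_P,\,v'\in vW_P} p^w_{u',v'}(B)$. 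Equating the two computed coefficients of $\xi^w$ then gives the claimed identity.

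I expect essentially no serious obstacle here beyond bookkeeping: the only point genuinely requiring care is the disjointness of the cosets $wW_P$, which is what guarantees that the coefficient extraction in the first computation isolates a single $p^w_{u,v}(P)$ rather than a sum over $W^P$. As a byproduct, comparing coefficients of an arbitrary $\xi^{w'}$ with $w'\notin W^P$ shows that $\sum_{u',v'} p^{w'}_{u',v'}(B)$ is constant as $w'$ ranges over a fixed coset $wW_P$, although this stronger statement is not needed for the proposition.
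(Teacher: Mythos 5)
Your proposal is correct and follows essentially the same route as the paper: apply $\pi^*$ to the defining relation \eqref{e.definep}, expand both sides using the lemma $\pi^*(\xi^v_P)=\sum_{u'\in vW_P}\xi^{u'}$ and the ring-homomorphism property of $\pi^*$, and equate coefficients of $\xi^w$ in the basis $\{\xi^{w'}\}_{w'\in W}$ of $K_T(G/B)$. Your added remark about the disjointness of the cosets $wW_P$ is exactly the (implicit) point that makes the paper's coefficient comparison valid.
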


\begin{proof} Since
\[\xi^u_P\xi^v_P= \sum_{w\in W^P}\,p^w_{u,v}(P)\xi^w_P,\]
taking $\pi^*$ and using the above lemma, we get
\[\sum_{\substack{u'\in uW_P\\ v'\in vW_P}}\,\xi^{u'}\xi^{v'}= \sum_{w\in W^P}\,
\bigl(p^w_{u,v}(P)\sum_{w'\in wW_P}\xi^{w'}\bigr),\]
i.e.,
\[ \sum_{\theta\in W}\sum_{\substack{u'\in uW_P\\ v'\in vW_P}}\,p^\theta_{u',v'}(B)\xi^\theta =
\sum_{w\in W^P}
\sum_{w'\in wW_P}\,p^w_{u,v}(P)\xi^{w'}.\]
Equating the coefficients from the two sides, we get the proposition.
\end{proof}

  Let $D$ be the diagonal map $G/P
\to G/P\times G/P$.  This, of course, induces the push-forward map
  \[
D_* : K_T(G/P) \longrightarrow K_T(G/P)  \otimes_{R(T)} K_T(G/P),
  \]
  \[D_*[\cf]=\sum_{p\geq 0}\,(-1)^p [R^pD_*\cf],\]
and also the pull-back (product) map
  \[
D^*: K_T(G/P)  \otimes_{R(T)} K_T(G/P) \longrightarrow K_T(G/P).
  \]
  Here we have identified $K_T(G/P \times G/P)$ with $K_T(G/P)  \otimes_{R(T)} K_T(G/P)$
  (cf. \cite[Theorem 5.6.1]{ChGi:97}).
 The next proposition gives another description of the coefficients
  $p^w_{u,v}(P)$.

  \begin{proposition}  \label{p.pn.1} For any $u,v,w\in W^P$,
\[
D_* [\co_{X^P_w}] = \sum_{u,v \in W^P}\, p^w_{u,v}(P)[\co_{X^P_u}]\boxtimes
[\co_{X^P_v}] .
  \]
  \end{proposition}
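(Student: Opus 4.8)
The plan is to read off the coefficients of $D_*[\co_{X^P_w}]$ in the external-product basis $\{[\co_{X^P_u}]\boxtimes[\co_{X^P_v}]\}$ of $K_T(G/P\times G/P)$ by pairing against the corresponding dual basis, and then to transport that pairing back to $G/P$ via the adjunction between $D_*$ and $D^*$. First I would record that, under the Künneth identification $K_T(G/P\times G/P)\cong K_T(G/P)\otimes_{R(T)}K_T(G/P)$, the Euler characteristic is multiplicative on external products, so that $\langle a\boxtimes b,\,a'\boxtimes b'\rangle=\langle a,a'\rangle\,\langle b,b'\rangle$. Combined with Proposition \ref{p.2.1}, this shows that $\{\xi^u_P\boxtimes\xi^v_P\}$ is the basis dual to $\{[\co_{X^P_u}]\boxtimes[\co_{X^P_v}]\}$ on the product. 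Writing $D_*[\co_{X^P_w}]=\sum_{u,v}c^w_{u,v}\,[\co_{X^P_u}]\boxtimes[\co_{X^P_v}]$ for unique $c^w_{u,v}\in R(T)$, duality then yields $c^w_{u,v}=\langle D_*[\co_{X^P_w}],\,\xi^u_P\boxtimes\xi^v_P\rangle$, so it suffices to identify this pairing with $p^w_{u,v}(P)$.

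The heart of the argument is the adjunction $\langle D_*\gamma,\delta\rangle=\langle\gamma,D^*\delta\rangle$ for $\gamma\in K_T(G/P)$ and $\delta\in K_T(G/P\times G/P)$. Since $D$ is a closed immersion of smooth proper $T$-varieties, its higher direct images vanish, so $D_*$ is the honest pushforward; the adjunction then follows from the projection formula $D_*\gamma\otimes\delta=D_*(\gamma\otimes D^*\delta)$ together with the invariance of $\chi$ under pushforward, $\chi(G/P\times G/P,\,D_*\alpha)=\chi(G/P,\alpha)$. Applying this with $\delta=\xi^u_P\boxtimes\xi^v_P$ gives $c^w_{u,v}=\langle[\co_{X^P_w}],\,D^*(\xi^u_P\boxtimes\xi^v_P)\rangle$.

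Finally I would use that pulling an external product back along the diagonal gives the internal product: writing $a\boxtimes b=p_1^*a\otimes p_2^*b$, with $p_1,p_2$ the two projections and $p_i\circ D=\mathrm{id}$, we get $D^*(\xi^u_P\boxtimes\xi^v_P)=\xi^u_P\,\xi^v_P$. Substituting the defining expansion $\xi^u_P\xi^v_P=\sum_\theta p^\theta_{u,v}(P)\,\xi^\theta_P$ from \eqref{e.definep} and invoking Proposition \ref{p.2.1} gives $c^w_{u,v}=\sum_\theta p^\theta_{u,v}(P)\,\langle[\co_{X^P_w}],\xi^\theta_P\rangle=p^w_{u,v}(P)$, as desired.

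I expect the main obstacle to be purely the bookkeeping around the adjunction: one must ensure that the projection formula and the invariance of $\chi$ under $D_*$ are available $T$-equivariantly, and that the Künneth identification is compatible with the $\chi$-pairing so that the dual basis on the product is the external product of the dual bases. None of this is deep, but it is where the content of the argument actually resides, the remaining manipulations being formal consequences of duality.
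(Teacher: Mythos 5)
Your proposal is correct and is essentially the paper's own argument run in the opposite direction: the paper computes $p^w_{u,v}(P)=\pi_{X*}(\xi^u_P\xi^v_P\otimes[\co_{X^P_w}])$ and converts it, via $D^*(\xi^u_P\boxtimes\xi^v_P)=\xi^u_P\,\xi^v_P$, functoriality of pushforward, and the projection formula, into the pairing of $D_*[\co_{X^P_w}]$ against the dual basis $\{\xi^u_P\boxtimes\xi^v_P\}$ of $K_T(X\times X)$ — exactly the adjunction and duality steps you use. The ingredients (projection formula, invariance of $\chi$ under pushforward, multiplicativity of the pairing on external products, Proposition \ref{p.2.1}) coincide, so no further comment is needed.
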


  \begin{proof}  This follows from functorial properties of $K$-theory.
To see this, for any space $Y$, write $\pi_Y$ for the projection from
$Y$ to a point.  Write $X = G/P$.  By definition, $\chi (X,\cf) = \pi_{X*}(\cf)$.
By definition of the coefficients $p^w_{u,v}(P)$ and Proposition \ref{p.2.1},
  \begin{align*}
p^w_{u,v}(P) &= \pi_{X*}  (\xi_P^u \xi_P^v\otimes [\co_{X^P_w}] ) \\
&= \pi_{X*} (D^* (\xi_P^u\boxtimes\xi_P^v) \otimes
[\co_{X^P_w}] ) \\
&= (\pi_{X\times X})_* \, D_*  (D^* (\xi_P^u\boxtimes\xi_P^v)
\otimes [\co_{X^P_w}] ) \\
&= (\pi_{X\times X})_*  ( (\xi_P^u\boxtimes\xi_P^v)\otimes D_*
[\co_{X^P_w}] ) .
  \end{align*}
Since $\{\xi_P^u\boxtimes\xi_P^v\}$ and $\{ [\co_{X^P_u}]\boxtimes
[\co_{X^P_v}]\}$ are dual bases of $K_T(X\times X)$, the lemma follows.
  \end{proof}

\begin{remark} \label{r.briondual}
The non-equivariant analogue of the preceding proposition holds
with the same proof.  Combining this with \cite[Theorem 1]{Bri:02},
we see that the structure constants $F(p^w_{u,v}(P))$
for the non-equivariant multiplication in the
basis $\{\xi_P^u\}_u$ (cf. equation (4) of Conjecture \ref{conj.GK}); here
$F: R(T) \to \Bbb Z$ is the forgetful map) satisfy
\[(-1)^{\ell (w)+\ell (u)+\ell (v)}F(p^w_{u,v}(P)) \in \bz_+.\]
\end{remark}
\vskip2ex

For any subset $S\subset \{ 1,\cdots ,\ell\}$ (including $S=\emptyset$),
let $W_S$ be the subgroup of $W$ generated by the simple reflections $\{
s_i, i\in S\}$.  Recall that $Q^+ := \sum_i\, \bz_+ \al_i $.

  \begin{proposition} \label{p.positive} For any $u,w\in W$, and any
  $S \subset \{ 1,\cdots ,\ell\}$, we have
  \[
(-1)^{\ell (w)+\ell (u)}\sum_{v\in W_S} p^w_{u,v} \in \sum_{ \beta \in Q^+}
\bz_+ e^{-\beta}.
  \]
In particular,
  \[
(-1)^{\ell (w)+\ell (u)}\, p^w_{u,e} \in \bz_+[e^{-\beta}-1]_{\beta \in\Del^+} .
  \]
  \end{proposition}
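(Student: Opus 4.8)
The plan is to turn the left side into an Euler characteristic on a Richardson variety and then combine Serre duality with a vanishing theorem. Throughout write $R=X^u\cap X_w$ for the Richardson variety; it is empty unless $u\le w$, and otherwise is a normal, Cohen--Macaulay variety with rational singularities of dimension $d=\ell(w)-\ell(u)$, so that $(-1)^{\ell(u)+\ell(w)}=(-1)^{d}$. Since the pairing satisfies $\langle ab,c\rangle=\chi(G/B,a\otimes b\otimes c)=\langle a,bc\rangle$, I would write
\[
\sum_{v\in W_S}p^w_{u,v}=\Big\langle \sum_{v\in W_S}\xi^v,\ \xi^u\otimes[\co_{X_w}]\Big\rangle .
\]
Let $P_S$ be the standard parabolic with Weyl group $W_S$. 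The Lemma above (applied with $P=P_S$ and $v=e$), together with the identification of the complement of the big cell $\partial X^e_{P_S}=\sum_{i\notin S}X^{s_i}_{P_S}$ as a divisor whose class is the dominant weight $\rho_S:=\sum_{i\notin S}\varpi_i$ ($\varpi_i$ the fundamental weights), gives $\sum_{v\in W_S}\xi^v=\pi^*\xi^e_{P_S}=e^{-\rho_S}[\cl(-\rho_S)]$. On the other hand $X^u\cap X_w$ and $\partial X^u\cap X_w$ are proper intersections of Cohen--Macaulay varieties, so \cite[Lemma 1]{Bri:02} (as used in the proof of Proposition \ref{p.2.1}) gives $\xi^u\otimes[\co_{X_w}]=[\co_R(-\partial^u R)]$ with $\partial^u R=\partial X^u\cap X_w$. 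Hence
\[
\sum_{v\in W_S}p^w_{u,v}=e^{-\rho_S}\,\chi\big(R,\ \cl(-\rho_S)|_R\otimes\co_R(-\partial^u R)\big).
\]

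Next I would compute the dualizing sheaf of $R$. On the smooth locus of $R$ the subvarieties $X^u,X_w$ meet transversally, so adjunction gives $\om_R\cong\om_{X^u}|_R\otimes\om_{X_w}|_R\otimes\om_{G/B}^{-1}|_R$; feeding in Proposition \ref{p.bases}(a),(b) and $\om_{G/B}\cong\cl(-2\rho)$, the line-bundle contributions $\cl(-\rho)\otimes\cl(-\rho)\otimes\cl(2\rho)=\cl(0)$ and the factors $e^{\rho},e^{-\rho}$ all cancel, leaving the $T$-equivariant identity $\om_R\cong\co_R(-\partial R)$, where $\partial R=\partial^uR+\partial_wR$ and $\partial_w R=\partial X_w\cap R$ (the uniqueness of such an equivariant structure, as in the proof of Proposition \ref{p.bases}, makes this rigorous on the normal Cohen--Macaulay $R$). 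Writing $\co_R(-\partial^uR)=\om_R\otimes\co_R(\partial_wR)$ and applying equivariant Grothendieck--Serre duality on $R$, I obtain
\[
(-1)^{\ell(u)+\ell(w)}\sum_{v\in W_S}p^w_{u,v}=e^{-\rho_S}\,\overline{\chi\big(R,\ \cl(\rho_S)|_R\otimes\co_R(-\partial_wR)\big)} .
\]

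It remains to analyse the cohomology of the nef line bundle $\cl(\rho_S)$ twisted by the ideal sheaf of the lower boundary. Because $R$ is Frobenius split compatibly with $\partial R$, the higher cohomology $H^{i}(R,\cl(\rho_S)|_R\otimes\co_R(-\partial_wR))$ vanishes for $i>0$, so the Euler characteristic is the $T$-character of the single module $M:=H^{0}(R,\cl(\rho_S)|_R\otimes\co_R(-\partial_wR))$, an effective (nonnegative) sum of characters $e^{\chi_0}$. The decisive point is then the weight estimate that every weight $\chi_0$ of $M$ satisfies $\chi_0+\rho_S\in Q^+$; granting it, each weight of $e^{-\rho_S}\overline{M}$ is $-(\chi_0+\rho_S)\in-Q^+$, and the displayed expression lies in $\sum_{\beta\in Q^+}\bz_+e^{-\beta}$, as claimed. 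The final (``in particular'') assertion is the case $S=\emptyset$, where $W_S=\{e\}$ and $\rho_S=\rho$, together with the elementary inclusion $\sum_{\beta\in Q^+}\bz_+e^{-\beta}\subseteq\bz_+[e^{-\beta}-1]_{\beta\in\Del^+}$: for $\delta=\sum_i m_i\alpha_i\in Q^+$ one has $e^{-\delta}=\prod_i\big(1+(e^{-\alpha_i}-1)\big)^{m_i}\in\bz_+[e^{-\alpha_i}-1]_i$.

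I expect the weight estimate $\chi_0+\rho_S\in Q^+$ to be the main obstacle. The fixed-point heuristic is encouraging --- the $T$-fixed points of $R$ are the $vB$ with $u\le v\le w$, where $\cl(\rho_S)$ has fiber weight $-v\rho_S$ and $-v\rho_S+\rho_S=\rho_S-v\rho_S\in Q^+$ --- but converting this into a bound on the weights of the cohomology module $M$ over the singular variety $R$ requires the theory of good (Demazure-type) filtrations for Frobenius split Richardson varieties; the cohomology vanishing for non-regular $\rho_S$ (i.e. general $S$) likewise rests on compatible Frobenius splitting rather than on ampleness and Kawamata--Viehweg.
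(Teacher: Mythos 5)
Your reduction coincides with the paper's: both arrive at the identity $\sum_{v\in W_S} p^w_{u,v} = e^{-\rho_S}\chi\bigl(X_w\cap X^u,\ \cl(-\rho_S)(-X_w\cap\partial X^u)\bigr)$. Your route to it is in fact slicker — the paper pairs $D_*[\co_{X_w}]$ against $\xi^u\boxtimes \cl(-\rho_S)$ and then proves $\chi(X_v,\cl(-\rho_S))=0$ for $v\notin W_S$ via a $\bp^1$-fibration on which $\cl(-\rho_S)$ has degree $-1$, whereas you encode all of this in the single identity $\sum_{v\in W_S}\xi^v=\pi^*\xi^e_{P_S}=e^{-\rho_S}[\cl(-\rho_S)]$ (correct, though pinning the equivariant twist $e^{-\rho_S}$ still needs the fixed-point weight comparison at $eP_S$, in the style of Proposition \ref{p.bases}). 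Where you diverge is the second half: the paper does not redo duality by hand but invokes the equivariant form of \cite[Theorem 4]{Bri:02}, which is precisely the statement you set out to reprove — it packages both the Serre duality on the Richardson variety (transported by $w_o$, since $w_o(X_w\cap X^u)=X_{w_ou}\cap X^{w_ow}$, which is why the paper has $w_o(*\,\cdot)$ where you have the bar) and the higher cohomology vanishing of $\cl(\rho_S)$ twisted by the boundary ideal. So the vanishing you defer to compatible Frobenius splitting is exactly the content of the cited theorem rather than something to re-establish; and if you do insist on the direct route, note that your equivariant identification $\om_R\cong\co_R(-\partial R)$ needs more care than adjunction on the smooth locus plus the ``trivial weight at a fixed point'' trick: when $u<w$ every $T$-fixed point of $R$ lies in $\partial R$, so that trick does not apply verbatim; one should instead use the canonical (hence automatically equivariant) Tor-independence/adjunction isomorphism together with Proposition \ref{p.bases}(a),(b).

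The step you single out as the main obstacle — the weight estimate $\chi_0+\rho_S\in Q^+$ for the weights of $M=H^0\bigl(R,\cl(\rho_S)\otimes\co_R(-\partial_wR)\bigr)$ — is actually immediate and needs no good-filtration theory for Richardson varieties: $M\subseteq H^0(R,\cl(\rho_S))$, and by \cite[Proposition 1]{BrLa:03} the restriction $H^0(G/B,\cl(\rho_S))\to H^0(R,\cl(\rho_S))$ is surjective, so every weight of $M$ is a weight of the irreducible module $H^0(G/B,\cl(\rho_S))$ with highest weight $-w_o\rho_S$; since the lowest weight of that module is $-\rho_S$, we get $\chi_0+\rho_S\in Q^+$. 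This is exactly how the paper concludes (its inclusion \eqref{(10)}). In summary: your outline is sound and, once the two deferred points are closed by the citations \cite[Theorem 4]{Bri:02} and \cite[Proposition 1]{BrLa:03} that the paper itself uses, it becomes equivalent to the paper's proof; as written, however, it is incomplete at precisely those two points, which is where the real content lives.
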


  \begin{proof}  As in Proposition \ref{p.pn.1}, write
  \[
D_*[\co_{X_w}] = \sum_{u,v\in W} p^w_{u,v}[\co_{X_u}]\boxtimes
[\co_{X_v}].
  \]
Pairing this with $\xi^u\boxtimes \cl (-\rho_S)$, we get
  \beqn \label{e.positive1}
\chi  ( X_w, \xi^u\otimes \cl (-\rho_S) ) = \sum_{v\in W}
p^w_{u,v}\, \chi ( X_v, \cl (-\rho_S) ),
  \eeqn
where $\rho_i$ is the $i$-th
fundamental weight and  $\rho_S:=\sum_{i\notin S} \rho_i$.

We claim that, for any $v\notin W_S$,
  \beqn \label{e.positive2}
H^i ( X_v, \cl(-\rho_S) ) = 0 \text{  for all }i\geq 0.
  \eeqn

Let $P=P_S$ be the parabolic subgroup corresponding to the subset $S$,
i.e., the Levi subgroup of $P_S$ containing $T$ has for its simple roots $\{\al_i\}_{i\in
S}$.  Then, the line bundle $\cl (-\rho_S)$ is the pull-back of a line
bundle on $G/P$.  Moreover, by \cite[Theorem 3.3.4]{BrKu:05},
  \beqn \label{e.positive3}
H^i ( X_v, \cl (-\rho_S) ) \cong H^i ( X_{v'}, \cl
(-\rho_S) ),
  \eeqn
where $v'$ is the coset representative of minimal length in the coset
$vW_S$.  Take $s_j$, $j\notin S$, such that $v's_j<v'$.  Then, the standard
projection $\pi : X_{v'}\to X_{v'}^{P_j}$ is a $\bp^1$-fibration and $\cl
(-\rho_S)$ has degree -1 along the fibers of $\pi$, where $P_j=P_{\{j\}}$.  Hence,
$R^i \pi_*\cl (-\rho_S) = 0$ for all $i$;  \eqref{e.positive2} follows from this
and the Leray spectral sequence together with \eqref{e.positive3}.

For $v\in W_S$, by \eqref{e.positive3},
  \[
H^i ( X_v, \cl (-\rho_S) ) \cong H^i ( X_e, \cl
(-\rho_S) ).
  \]
Thus,
  \begin{align*}
H^i ( X_v, \cl (-\rho_S) ) &= 0 \quad\text{if $i>0$ and }\\
\text{ch } H^0 ( X_v, \cl (-\rho_S) ) &= e^{\rho_S}.
  \end{align*}
Thus, by \eqref{e.positive1},
\[\chi  ( X_w, \xi^u\otimes\cl (-\rho_S) )
= \Bigl(\sum_{v\in W_S} p^w_{u,v}\Bigr)\, e^{\rho_S},  \]
i.e.,
\beqn\label{(8)}
\sum_{v\in W_S} p^w_{u,v}
= e^{-\rho_S}\, \chi  ( X_w, \xi^u\otimes\cl (-\rho_S) ) .
  \eeqn
Now,
$$
\chi  ( X_w, \xi^u\otimes\cl (-\rho_S) ) = \chi  ( X_w\cap X^u, \cl (-\rho_S)  (-X_w\cap\partial X^u)).
$$
By \cite[Theorem 4]{Bri:02}, this equals
\beqn\label{(9)}
(-1)^{\ell (u)+\ell (w)}w_o\Bigl(*\text{ch}\Bigl( H^0  (
X_{w_ou}\cap X^{w_ow}, \cl (\rho_S)(-X_{w_ou}\cap
\partial X^{w_ow}) )\Bigr)\Bigr),
\eeqn
where $w_o$ is the longest element of $W$.
(Brion's result is stated non-equivariantly, but
if we change
his duality formula to the following:
\[c^w_v(\lambda)=(-1)^{\ell (v)+ \ell (w)} w_o\cdot(*c^{w_ov}_{w_ow}(-\lambda)),\]
then it remains true $T$-equivariantly by a similar proof.)
By \cite[Proposition 1]{BrLa:03}, the restriction map
  \[
H^0(G/B, \cl (\rho_S)) \longrightarrow H^0 ( X_{w_ou}\cap X^{w_ow},
\cl (\rho_S) )
  \]
is surjective.  Also,
$$
H^0  (X_{w_ou}\cap X^{w_ow}, \cl (\rho_S)(-X_{w_ou}\cap
\partial X^{w_ow}) ) \subset H^0 ( X_{w_ou}\cap X^{w_ow},
\cl (\rho_S) ).
$$
Since $H^0(G/B, \cl (\rho_S))$ is the irreducible $G$-module with highest weight
$- w_o \rho_S$, we see that
  \beqn\label{(10)}
e^{-\rho_S}\, w_o\Bigl(*\text{ch}\Bigl( H^0  (
X_{w_ou}\cap X^{w_ow}, \cl (\rho_S)(-X_{w_ou}\cap
\partial X^{w_ow}) )\Bigr)\Bigr) \in \sum_{\beta\in Q^+} \bz_+\, e^{-\beta} .
  \eeqn
Combining \eqref{(8)}-\eqref{(10)}, we get the lemma.
  \end{proof}

  \begin{remark}  \label{r.positive}
  Comparing the expression obtained in the above proof for
$p^w_{u,s_i}+p^w_{u,e}$ and $p^w_{u,e}$ (for any simple reflection $s_i$),
it can be shown that
  \[
(-1)^{\ell (u)+\ell (w)+1}\, p^w_{u,s_i} \in \bz_+\,
[e^{-\beta}-1]_{\beta\in\Del^+}.
  \]
  \end{remark}

\subsection{Positivity in the structure sheaf basis} \label{ss.conjectureGR} We
recall below the conjecture of
Griffeth and Ram on the nonnegativity of the product  in the structure sheaf basis.
They verified their conjecture for rank-2 groups by an explicit
case by case calculation.  In this section we prove that
the validity of their conjecture for $P=B$ implies its validity for
every (standard) parabolic subgroup $P$, and give
an equivalent formulation of their conjecture in terms of dualizing sheaves.

  \begin{conjecture}  \label{conj.GR}
  For any standard parabolic subgroup $P$ and $u,v\in
W^P$, express
  \beqn \label{e.GR1}
 [\co_{X_P^u} ]  [\co_{X^v_P} ] = \sum_{w\in W^P}
c^w_{u,v}(P)  [ \co_{X^w_P} ] \in K_T(G/P),
  \eeqn
for some (unique) $c^w_{u,v}(P)\in R(T)$.

Then,
  \[
(-1)^{\ell (u)+\ell (v)+\ell (w)} c^w_{u,v}(P) \in
\bz_+[e^{-\beta}-1]_{\beta\in\Del^+} .
  \]
  \end{conjecture}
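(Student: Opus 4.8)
The plan is to reduce to the case $P=B$ and then recast each structure constant as an equivariant Euler characteristic, so that all of the positivity is concentrated in a single surviving cohomology module. First I would invoke the reduction already announced in the introduction (Proposition \ref{p=b}): via the projection $\pi\colon G/B\to G/P$ and the behaviour of $\pi^*,\pi_*$ on the two structure-sheaf bases, the coefficients $c^w_{u,v}(P)$ are expressible in terms of the $c^w_{u,v}(B)$, so it suffices to treat $P=B$. This step is formal, and I expect no real difficulty there. Write $c^w_{u,v}:=c^w_{u,v}(B)$.

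Second, I would extract $c^w_{u,v}$ by pairing against the appropriate dual basis. By the $B\leftrightarrow B^-$ symmetric form of Proposition \ref{p.2.1}, the basis $\{[\co_{X^w}]\}$ is dual to the classes $[\co_{X_w}(-\partial X_w)]$ (the Schubert analogues of the $\xi^w$). Hence, with the pairing $\langle v_1,v_2\rangle=\chi(G/B,v_1\otimes v_2)$,
\[
c^w_{u,v}=\chi\bigl(G/B,\ [\co_{X^u}]\cdot[\co_{X^v}]\cdot[\co_{X_w}(-\partial X_w)]\bigr),
\]
an equivariant Euler characteristic on $G/B$. Using the diagonal push-forward description (the $D_*$ formula, as in Proposition \ref{p.pn.1}), this is the $\chi$ of a structure sheaf twisted by an ideal sheaf of a boundary divisor, supported on intersections of Schubert and opposite Schubert varieties (Richardson varieties $X_w\cap X^u$ and their boundary strata). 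Because such intersections are reduced, Cohen-Macaulay and have rational singularities (cf. \cite{BrKu:05}, \cite{BrLa:03}) and the boundary pieces are of pure codimension one, the relevant sheaf is well-behaved and its $\chi$ can be attacked geometrically.

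Third — and this is the crux — I would try to prove a vanishing theorem forcing the alternating sum to collapse onto a single cohomology group carrying the sign $(-1)^{\ell(u)+\ell(v)+\ell(w)}$, and then show that the $T$-module structure of that surviving group has character lying in $\bz_+[e^{-\beta}-1]_{\beta\in\Del^+}$. The model is Brion's proof of the non-equivariant statement (\cite[Theorems 1 and 4]{Bri:02}), which already yields the forgetful-image inequality; the task is to lift his duality/vanishing argument to the $T$-equivariant level, reading off the weights of the surviving $H^0$ exactly as in the $\rho_S$-twisted computation used for Proposition \ref{p.positive}. I expect this weight-positivity to be the genuine obstacle: controlling the full $T$-character — not merely the rank — of the surviving cohomology of these singular intersections seems to demand new vanishing and normality input beyond \cite{Bri:02}, and it is precisely this difficulty that leaves the statement a conjecture in general. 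I would regard the purely algebraic alternative — deducing it from the dual-basis Conjecture \ref{conj.GK} through the change-of-basis relations (Propositions \ref{p.relation}, \ref{p.4.4}, \ref{3.12}) — as merely transporting the same hard positivity between bases rather than resolving it.
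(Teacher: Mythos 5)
There is no proof in the paper to compare against: the statement you were given is Conjecture \ref{conj.GR} (due to Griffeth--Ram), and it is left open in the paper. What the paper actually establishes about it is: the reduction from general $P$ to $P=B$ (Proposition \ref{p=b}, whose proof is the formal $\pi^*$ argument you sketch, resting on $\pi^*[\co_{X^w_P}]=[\co_{X^w}]$ --- note this is why the conjecture is phrased with \emph{opposite} Schubert varieties, for which the pull-back is again a single structure sheaf class); the equivalent reformulation via dualizing sheaves (Proposition \ref{3.12}); the integrality statement $c^w_{u,v}\in\bz[e^{-\beta}-1]_{\beta\in\Del^+}$ \emph{without} positivity of the coefficients (Corollary \ref{5.2}); and the full conjecture only for $G/P=\bp^n$, by an explicit closed formula and a recurrence (Theorems \ref{t.pn.4}--\ref{t.pn.6}), plus the rank-$2$ case by direct computation and the nonequivariant case, which is Brion's theorem. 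Your proposal is therefore correctly calibrated: step 1 reproduces Proposition \ref{p=b}, step 2 is a valid extraction of $c^w_{u,v}$ by the $B\leftrightarrow B^-$ form of Proposition \ref{p.2.1}, and your step 3 honestly identifies the genuinely open point, namely an equivariant lift of Brion's vanishing/duality argument controlling the full $T$-character of a surviving cohomology group. Indeed the authors themselves pursue exactly this strategy one level up: Conjecture \ref{conj.6.1} is their proposed equivariant generalization of Brion's Theorem 1, though as stated it would imply Conjecture \ref{conj.GK} rather than \ref{conj.GR}.

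One technical caution about your step 2: after writing $c^w_{u,v}=\chi\bigl(G/B,[\co_{X^u}][\co_{X^v}][\co_{X_w}(-\partial X_w)]\bigr)$, you describe the integrand as a structure sheaf of a Richardson-type intersection twisted by a boundary ideal sheaf. That is not quite right, because $X^u$ and $X^v$ are both $B^-$-stable, so $X^u\cap X^v$ is \emph{not} a proper intersection and $[\co_{X^u}][\co_{X^v}]$ is not $[\co_{X^u\cap X^v}]$ --- the failure of properness here is exactly where the structure constants live. Brion circumvents this nonequivariantly by degenerating the diagonal in $X\times X$, and the degeneration is compatible only with the diagonal torus; any equivariant version must confront this (which is why Conjecture \ref{conj.6.1} is formulated for a subtorus $T'\subset T$, with positivity in $\bz_+[e^{-\beta}_{\mid T'}-1]$). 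So your plan is a reasonable roadmap whose hard step is precisely the paper's open problem; had you instead claimed the vanishing step as established, that would have been the fatal gap.
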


  \begin{remark} \label{r.GR} Express
  \[
 [\co_{X^P_u} ]  [\co_{X_v^P} ] = \sum_{w\in W^P}
b^w_{u,v}(P)  [ \co_{X_w^P} ] .
  \]
Then, an equivalent formulation of the above conjecture asserts that
  \[
(-1)^{\dim (G/P)+\ell (u)+\ell (v)+\ell (w)} b^w_{u,v}(P) \in
\bz_+[e^{\beta}-1]_{\beta\in\Del^+} .
  \]
  Moreover, an argument similar to the proof of Proposition \ref{p.pn.1} shows
  that the structure constants
  $b_{u,v}^w(P)$
  are described by the equation:
  $$
  D_*(\xi^w_P) = \sum_{u,v \in W^P}\, b^w_{u,v}(P) \xi^u_P \boxtimes
\xi^v_P.
$$
\end{remark}

  \begin{proposition}\label{p=b}
We have
  \beqn \label{e.validity1}
c^w_{u,v}(P) = c^w_{u,v}(B), \text{ for any } u,v,w\in W^P.
  \eeqn
  Hence, the validity of the above Conjecture \ref{conj.GR} for $P=B$ implies its validity for
every (standard) parabolic subgroup $P$.
  \end{proposition}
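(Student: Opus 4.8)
The plan is to push everything down to $G/B$ along the projection $\pi\colon G/B\to G/P$, which is a smooth (hence flat) locally trivial fiber bundle with fiber $P/B$, and to use that $\pi^*$ is a ring homomorphism. The decisive point I would isolate is a pullback formula for the opposite Schubert structure sheaves: for every $w\in W^P$,
\[
\pi^*[\co_{X^w_P}] = [\co_{X^w}] \quad\text{in } K_T(G/B).
\]
Granting this, since $u,v\in W^P$ and $\pi^*$ respects products, applying $\pi^*$ to the relation defining the $c^w_{u,v}(P)$ gives
\[
[\co_{X^u}]\,[\co_{X^v}] = \pi^*\bigl([\co_{X^u_P}]\,[\co_{X^v_P}]\bigr) = \sum_{w\in W^P} c^w_{u,v}(P)\,[\co_{X^w}].
\]
Comparing with the defining relation $[\co_{X^u}][\co_{X^v}]=\sum_{w\in W}c^w_{u,v}(B)[\co_{X^w}]$ in $K_T(G/B)$ and using that $\{[\co_{X^w}]\}_{w\in W}$ is an $R(T)$-basis, I would read off directly that $c^w_{u,v}(B)=c^w_{u,v}(P)$ for $w\in W^P$ (and, incidentally, $c^w_{u,v}(B)=0$ for $w\notin W^P$).

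To establish the pullback formula I would first identify the scheme-theoretic preimage $\pi^{-1}(X^w_P)=X^w$. Set-theoretically $\pi(X^w)=\overline{B^- wP/P}=X^w_P$, so $X^w\subseteq\pi^{-1}(X^w_P)$; both are irreducible, and the dimension count
\[
\dim\pi^{-1}(X^w_P)=\dim X^w_P+\dim(P/B)=(\dim G/P-\ell(w))+\dim(P/B)=\dim G/B-\ell(w)=\dim X^w
\]
forces equality of underlying sets. (Equivalently, $X^w=\bigsqcup_{z\ge w}B^- zB/B$ is $\pi$-saturated, because for $w\in W^P$ one has $wx\ge w$ in the Bruhat order for all $x\in W_P$, so every $\pi$-fiber meeting $X^w$ lies in $X^w$.) Since $\pi$ is smooth, the preimage of the reduced variety $X^w_P$ is reduced, whence $\pi^{-1}(X^w_P)=X^w$ as schemes; flatness of $\pi$ then yields $\pi^*\co_{X^w_P}=\co_{X^w}$ with no higher Tor contributions, and the identification is $T$-equivariant since $\pi$ and the varieties involved are $B^-$-stable. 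This is exactly $\pi^*[\co_{X^w_P}]=[\co_{X^w}]$.

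For the \emph{Hence} assertion, I observe that the exponents $\ell(u),\ell(v),\ell(w)$ in Conjecture \ref{conj.GR} are the ordinary Weyl-group lengths, identical for $G/P$ and $G/B$, so the sign $(-1)^{\ell(u)+\ell(v)+\ell(w)}$ agrees in both settings; the equality $c^w_{u,v}(P)=c^w_{u,v}(B)$ then transports the membership in $\bz_+[e^{-\beta}-1]_{\beta\in\Del^+}$ from $B$ to $P$ verbatim. I expect the only genuine work to be the scheme-theoretic preimage identity $\pi^{-1}(X^w_P)=X^w$—in particular the saturation and reducedness of the preimage—while the passage through $\pi^*$ and the comparison of coefficients are formal.
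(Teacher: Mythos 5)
Your proposal is correct and follows essentially the same route as the paper: pull back along the smooth projection $\pi\colon G/B\to G/P$, use that $\pi^*$ is a ring homomorphism together with $\pi^*[\co_{X^w_P}]=[\co_{\pi^{-1}(X^w_P)}]=[\co_{X^w}]$, and compare coefficients in the basis $\{[\co_{X^w}]\}_{w\in W}$. The only (immaterial) difference is how the identification $\pi^{-1}(X^w_P)=X^w$ is justified: you argue via irreducibility plus a dimension count (or Bruhat saturation) and reducedness of smooth preimages, whereas the paper uses the translation $B^-wP/P=w_oBw_owP/P$ with $w_ow$ the longest element of $w_owW_P$.
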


  \begin{proof}  Let $\pi: G/B \to G/P$ be the standard projection.
Since $\pi$ is a $T$-equivariant smooth morphism, for any $T$-stable
closed subvariety $Z \subseteq G/P$,
  \begin{align} \label{e.validity2}
\pi^*[\co_Z] &= [\co_{\pi^{-1}(Z)} ] .\notag\\
\intertext{Thus,}
\pi^*[\co_{X^w_P}] &= [\co_{\pi^{-1}(X^w_P)}]
\notag\\
&= [\co_{X^w}] ,
  \end{align}
since $B^-wP/P = w_oBw_owP/P$ and $w_ow$ is the longest element in the
$W_P$-orbit $w_owW_P$.

Since $\pi^*$ is a ring homomorphism, \eqref{e.validity1} follows from
\eqref{e.validity2}.
  \end{proof}

As a consequence of this proposition, we will simply write $c_{u,v}^w$
for $c_{u,v}^w(P)$.

The following proposition provides an equivalent formulation of Conjecture \ref{conj.GR}
in terms of dualizing sheaves.

  \begin{proposition} \label{3.12} For any standard parabolic subgroup $P$ and any $u,v\in
W^P$, express
  \beqn \label{e.equiv1}
[\om_{X^u_P} ] \cdot [\om_{X^v_P}] = \sum_{w\in W^P}
d^w_{u,v}(P)[\om_{X^w_P} ] [\om_{G/P} ] ,
  \eeqn
for some (unique) $d^w_{u,v}(P)\in R(T)$.  Then,
  \beqn \label{e.equiv2}
d^w_{u,v}(P) = (-1)^{\ell (u)+\ell (v)+\ell (w)} *(c^w_{u,v}) .
  \eeqn
In particular, Conjecture \ref{conj.GR} is equivalent to the conjecture that
$d^w_{u,v}(P) \in \bz_+[e^{\beta}-1]_{\beta\in\Del^+}$.

Moreover,
\[d^w_{u,v}(P)=d^w_{u,v}(B),\,\,\text{ for any }\, u,v,w\in W^P.\]

  \end{proposition}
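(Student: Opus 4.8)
The plan is to pass between the structure-sheaf product and the dualizing-sheaf product by means of the Brion duality identity already invoked in the proof of Proposition \ref{p.bases}(d): for any closed $T$-stable Cohen-Macaulay subvariety $Y\subseteq G/P$,
$$*[\co_Y] = (-1)^{\codim Y}\,[\om_Y]\cdot *[\om_{G/P}].$$
(Brion's result holds for any smooth projective $T$-variety, so it applies to $G/P$ as well as to $G/B$.) The one observation that makes everything go through is that $*[\om_{G/P}]$ is a unit in $K_T(G/P)$ whose inverse is $[\om_{G/P}]$ itself: since $\om_{G/P}$ is a line bundle, $*[\om_{G/P}] = [\om_{G/P}^{-1}] = [\om_{G/P}]^{-1}$. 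Because $\codim X^w_P = \ell(w)$ for $w\in W^P$, the identity above converts each opposite-Schubert structure sheaf into the corresponding dualizing sheaf up to an explicit sign and the unit $*[\om_{G/P}]$.

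First I would apply the ring involution $*$ to the Griffeth-Ram expansion \eqref{e.GR1}. Since $*$ is a ring homomorphism ($*[E\otimes F]=*[E]\,*[F]$ for vector bundles) and $R(T)$-semilinear, this gives
$$*[\co_{X^u_P}]\cdot *[\co_{X^v_P}] = \sum_{w\in W^P} *(c^w_{u,v})\, *[\co_{X^w_P}].$$
Substituting the duality identity for each of the three structure sheaves and then solving for the product $[\om_{X^u_P}][\om_{X^v_P}]$ — the two factors of the unit $*[\om_{G/P}]$ coming from the left combine with the single factor on the right to leave exactly one factor $(*[\om_{G/P}])^{-1}=[\om_{G/P}]$ — I expect to land on
$$[\om_{X^u_P}][\om_{X^v_P}] = \sum_{w\in W^P} (-1)^{\ell(u)+\ell(v)+\ell(w)}\,*(c^w_{u,v})\,[\om_{X^w_P}][\om_{G/P}].$$
Comparing with the defining expansion \eqref{e.equiv1} and using that $\{[\om_{X^w_P}][\om_{G/P}]\}_{w\in W^P}$ is again an $R(T)$-basis (it is the image of the structure-sheaf basis under the automorphism obtained by composing $*$ with multiplication by the unit $[\om_{G/P}]$), I equate coefficients to obtain \eqref{e.equiv2}.

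The equivalence of the two positivity statements is then immediate from \eqref{e.equiv2}: the involution $*$ sends $e^{-\beta}-1$ to $e^{\beta}-1$ and preserves $\bz_+$-coefficients, so $(-1)^{\ell(u)+\ell(v)+\ell(w)}c^w_{u,v}\in\bz_+[e^{-\beta}-1]_{\beta\in\Del^+}$ holds if and only if its image $d^w_{u,v}(P)$ lies in $\bz_+[e^{\beta}-1]_{\beta\in\Del^+}$. Finally, $d^w_{u,v}(P)=d^w_{u,v}(B)$ follows by combining \eqref{e.equiv2} with Proposition \ref{p=b}, which gives $c^w_{u,v}(P)=c^w_{u,v}(B)$; the sign and the $*$ in \eqref{e.equiv2} depend only on $u,v,w\in W$ and not on $P$. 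The main point requiring care is the bookkeeping of the $[\om_{G/P}]$ factors and the codimension signs in the cancellation step, together with the verification that $\{[\om_{X^w_P}][\om_{G/P}]\}$ is a genuine basis so that coefficients may legitimately be compared; neither is deep, but both must be executed cleanly to produce exactly the sign $(-1)^{\ell(u)+\ell(v)+\ell(w)}$.
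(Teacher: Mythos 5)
Your proposal is correct and takes essentially the same approach as the paper: both arguments pivot on Brion's identity $*[\co_{X^v_P}] = (-1)^{\ell(v)}[\om_{X^v_P}]\cdot *[\om_{G/P}]$ together with the invertibility of the line-bundle class $[\om_{G/P}]$, the only difference being direction — the paper multiplies \eqref{e.equiv1} by $(*[\om_{G/P}])^2$ and reduces it to the $*$-image of \eqref{e.GR1}, whereas you apply $*$ to \eqref{e.GR1} and solve for the dualizing-sheaf product, which is the same computation run backwards. Your handling of the positivity equivalence and the deduction of $d^w_{u,v}(P)=d^w_{u,v}(B)$ from Proposition \ref{p=b} likewise match the paper.
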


  \begin{proof}  By \cite[\S2]{Bri:02},
  \beqn \label{e.equiv3}
* [\co_{X^v_P} ] = (-1)^{\ell (v)} [\om_{X^v_P} ] \cdot
* [\om_{G/P}]  .
  \eeqn
Multiply equation \eqref{e.equiv1} by $  *[\om_{G/P}] ^2$ to get
  \[
 [\om_{X^u_P} ] *   [\om_{G/P}]   [\om_{X^v_P} ]
*   [\om_{G/P}]  = \sum_{w\in W^P} d^w_{u,v}(P)
 [\om_{X^w_P} ] *  [\om_{G/P}]  .
  \]
By \eqref{e.equiv3}, the above equation reduces to
  \[
* [\co_{X^u_P} ]\cdot\, *  [\co_{X^v_P} ]
= \sum_{w\in W^P}(-1)^{\ell (u)+\ell (v)+\ell (w)} d^w_{u,v}(P)
* [\co_{X^w_P} ] .
  \]
Comparing this with the identity \eqref{e.GR1} of Conjecture \ref{conj.GR}, we get
  \[
d^w_{u,v}(P) = (-1)^{\ell (u)+\ell (v)+\ell (w)} * (
c^w_{u,v}) .
  \]
This proves \eqref{e.equiv2}.
  \end{proof}

\section{Relations between structure sheaf constants} \label{s.relations}
In this section we restrict to the case $P=B$,
and prove two relations (Propositions \ref{p.relation}
and \ref{p.4.4})
between the structure constants
in the structure sheaf basis and the structure constants in
the dual basis.  However, we do not know how to use these
relations to relate the two positivity
conjectures \ref{conj.GK} and \ref{conj.GR}.

  Write
  \[
[\cl (\rho ) ] [\co_{X^{\theta}}] = \sum_{w\in W}\, d^{\theta}_w[\co_{X^w}],
  \]
for some (unique) $d^{\theta}_w\in R(T)$.
The following proposition gives a relation between the structure constants
$c^w_{u,v}$ and $p^w_{u,v}$.

  \begin{proposition}  \label{p.relation}
  For any $u,v,w\in W$,
  \[
c^w_{u,v} = (-1)^{\ell (u)+\ell (v)} \sum_{\theta\in W} (-1)^{\ell (\theta
)}\, e^{\rho}\, d^{\theta}_w\, \overline{p}^{\theta}_{u,v}.
  \]
  \end{proposition}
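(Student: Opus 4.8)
The plan is to move between the two bases of $K_T(G/B)$ by combining the involution $*$ with the explicit comparison of $\xi^w$ and $[\co_{X^w}]$ supplied by Proposition \ref{p.bases}. The essential bridge is the identity
\[
*[\co_{X^w}] = (-1)^{\ell(w)}\, e^{\rho}\,[\cl(\rho)]\,\xi^w ,
\]
which comes out of parts (c) and (d) of Proposition \ref{p.bases}: substituting $*\tau^w=\xi^{w^{-1}}$ from (c) into (d), setting $z=w^{-1}$ and using $\ell(z)=\ell(z^{-1})$. Since $e^{\rho}[\cl(\rho)]$ is a line bundle class it is a unit in $K_T(G/B)$, so this identity says that (up to the sign $(-1)^{\ell(w)}$) multiplication by $e^{\rho}[\cl(\rho)]$ followed by $*$ carries the dual basis $\{\xi^w\}$ to the opposite structure sheaf basis $\{[\co_{X^w}]\}$.

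First I would apply $*$ to the defining relation $\xi^u\xi^v=\sum_\theta p^\theta_{u,v}\xi^\theta$. As $*$ is a ring homomorphism with $*e^{\lambda}=e^{-\lambda}$ and $*[\cl(\lambda)]=[\cl(-\lambda)]$, this gives $\overline{\xi^u}\,\overline{\xi^v}=\sum_\theta \overline{p}^{\,\theta}_{u,v}\,\overline{\xi^\theta}$. Now I would feed in the displayed identity for each of the three $\xi$'s, clear the common invertible factor $e^{\rho}[\cl(\rho)]$, and apply $*$ once more. All the $*$'s and twists collapse, and one is left with a clean identity of the form
\[
(-1)^{\ell(u)+\ell(v)}\, e^{\rho}\,[\cl(\rho)]\,[\co_{X^u}][\co_{X^v}] = \sum_\theta (-1)^{\ell(\theta)}\,\overline{p}^{\,\theta}_{u,v}\,[\co_{X^\theta}],
\]
which expresses a twist of the product of two opposite Schubert structure sheaves directly in terms of the $\overline{p}$'s.

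The last step is to extract $c^w_{u,v}$. I would substitute $[\co_{X^u}][\co_{X^v}]=\sum_{w}c^{w}_{u,v}[\co_{X^w}]$ on the left and invoke the definition $[\cl(\rho)][\co_{X^\theta}]=\sum_w d^\theta_w[\co_{X^w}]$ to rewrite the line bundle twist, then compare coefficients of $[\co_{X^w}]$ on the two sides (equivalently, pair both sides against the basis dual to $\{[\co_{X^w}]\}$). Collecting the terms indexed by $\theta$ then yields the asserted formula
\[
c^w_{u,v} = (-1)^{\ell(u)+\ell(v)}\sum_{\theta\in W} (-1)^{\ell(\theta)}\, e^{\rho}\, d^{\theta}_w\, \overline{p}^{\,\theta}_{u,v}.
\]

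The main obstacle is not conceptual but bookkeeping: one must track the involution $*$ (which conjugates each $p^\theta_{u,v}$ to $\overline{p}^{\,\theta}_{u,v}$ and flips every $e^{\lambda}$ and $[\cl(\lambda)]$), the length signs $(-1)^{\ell(\cdot)}$ introduced by the bridge identity, and above all the precise side and sign of the residual line bundle twist, so that it is mediated by $d^\theta_w$ (as defined through $[\cl(\rho)]$) rather than by its inverse. I would pin down this normalization — in particular the sign of $\rho$ in the twist — by checking the identity against the trivial case $u=v=e$, where $[\co_{X^e}]=[\co_{G/B}]$, before trusting the general sign pattern; everything else in the argument is formal manipulation in the ring $K_T(G/B)$.
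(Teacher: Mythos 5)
Your route is the same as the paper's: you extract the bridge identity $*\xi^w=(-1)^{\ell(w)}e^{\rho}[\cl(\rho)][\co_{X^w}]$ from Proposition \ref{p.bases}(c),(d), apply $*$ to $\xi^u\xi^v=\sum_\theta p^\theta_{u,v}\xi^\theta$, and your intermediate display
\[
(-1)^{\ell(u)+\ell(v)}\,e^{\rho}[\cl(\rho)]\,[\co_{X^u}][\co_{X^v}]=\sum_{\theta\in W}(-1)^{\ell(\theta)}\,\overline{p}^{\,\theta}_{u,v}\,[\co_{X^\theta}]
\]
is correct. The gap is in your final step. Substituting $[\co_{X^u}][\co_{X^v}]=\sum_w c^w_{u,v}[\co_{X^w}]$ on the left and expanding $[\cl(\rho)][\co_{X^w}]=\sum_{w'}d^w_{w'}[\co_{X^{w'}}]$ \emph{there} yields, upon comparing coefficients,
\[
(-1)^{\ell(u)+\ell(v)}\,e^{\rho}\sum_{w}c^w_{u,v}\,d^{w}_{w'}=(-1)^{\ell(w')}\,\overline{p}^{\,w'}_{u,v},
\]
which pins down $c^w_{u,v}$ only after inverting the matrix $(d^\theta_w)$; it does not ``collect'' into the asserted formula, in which $d^\theta_w$ appears uninverted. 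To get $d$ uninverted you must instead divide your display by the unit $e^{\rho}[\cl(\rho)]$ and expand the resulting twist on the $\overline{p}$-side, i.e.
\[
[\co_{X^u}][\co_{X^v}]=(-1)^{\ell(u)+\ell(v)}\sum_{\theta}(-1)^{\ell(\theta)}\,\overline{p}^{\,\theta}_{u,v}\,e^{-\rho}[\cl(-\rho)]\,[\co_{X^\theta}],
\]
which is how the paper argues --- and note that the twist actually arising is $e^{-\rho}[\cl(-\rho)]$, so the identity is mediated by the expansion of $[\cl(-\rho)][\co_{X^\theta}]$ in the basis $\{[\co_{X^w}]\}$, not by the $d^\theta_w$ defined through $[\cl(\rho)]$.

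Your instinct to distrust the sign of $\rho$ and test small cases was exactly the right call, and the test is decisive: for $SL_2$ (so $\al=2\rho$) one computes $c^s_{s,s}=p^s_{s,s}=1-e^{-\al}$ and $d^s_s=e^{\rho}$, while $[\cl(-\rho)][\co_{X^s}]=e^{-\rho}[\co_{X^s}]$; the formula as stated, with $e^{\rho}d^\theta_w$, returns $e^{\al}(e^{\al}-1)\neq c^s_{s,s}$, whereas the $e^{-\rho}$, $\cl(-\rho)$ version returns $1-e^{-\al}$ correctly (the case $u=v=e$, $w=s$ detects the same discrepancy). In other words, the normalization issue you flagged is real: at the corresponding step the paper's proof writes $e^{\rho}[\cl(\rho)]$ where the computation produces $e^{-\rho}[\cl(-\rho)]$, and the statement inherits this $\rho\mapsto-\rho$ slip. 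With the twist placed on the correct side and $d^\theta_w$ redefined through $\cl(-\rho)$ (equivalently, with $e^{\rho}d^\theta_w$ replaced by $e^{-\rho}\hat d^\theta_w$, where $[\cl(-\rho)][\co_{X^\theta}]=\sum_w\hat d^\theta_w[\co_{X^w}]$), your argument becomes a complete, corrected version of the paper's proof.
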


  \begin{proof}  By Proposition \ref{p.bases},
  \[
*\xi^w =  \tau^{w^{-1}} = (-1)^{\ell (w)}\, e^{\rho}\, [\cl (\rho )]
[\co_{X^w}].
  \]
  We have
  \[
(*\xi^u)(*\xi^v) = \sum_{\theta\in W} \overline{p}^{\theta}_{u,v}\,
(*\xi^{\theta}) ,
  \]
and hence
  \begin{align*}
[\co_{X^u}][\co_{X^v}] &= (-1)^{\ell (u)+\ell (v)}
\sum_{\theta\in W} (-1)^{\ell (\theta )}\, \overline{p}^{\theta}_{u,v}
e^{\rho}\, [\cl (\rho )] [\co_{X^{\theta}}] \\
 &= (-1)^{\ell (u)+\ell (v)} \sum_{\theta ,w\in W} (-1)^{\ell (\theta )}\,
\overline{p}^{\theta}_{u,v} e^{\rho} d^{\theta}_w [\co_{X^w}].
  \end{align*}
From this the proposition follows.
  \end{proof}

Before stating the second relation between structure
constants, we compare the bases $\{\xi^v\}_{v\in W}$ and $ \{
 [\co_{X^v} ] \}_{v\in W}$ of $K_T(G/B)$.  Let
 \[
  \mu(v,w) =
\begin{cases} (-1)^{\ell (v)+\ell (v)} &\text{if $v\leq w$}\\
0 &\text{otherwise}  \end{cases}
  \]
denote the M\"obius function of the Weyl group $W$.

  \begin{lemma} \label{l.4.3}
For any $v\in W$, write
  \begin{align*}
 [\co_{X^v} ] &= \sum_{w\in W} e_{v,w}\, \xi^w.  \\
\intertext{Then}
e_{v,w} &= 1, \quad\text{if } v\leq w \\
&= 0, \quad\text{otherwise}.
  \end{align*}
Thus,
  \[
\xi^v = \sum_w\, \mu(v,w) [\co_{X^w} ].
\]
  \end{lemma}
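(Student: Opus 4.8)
The plan is to compute the coefficients $e_{v,w}$ by exploiting the duality of Proposition \ref{p.2.1}, which reduces each $e_{v,w}$ to the Euler characteristic of a Richardson variety; the second assertion is then a formal M\"obius inversion.

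First I would pair the defining relation $[\co_{X^v}] = \sum_{w\in W} e_{v,w}\,\xi^w$ against $[\co_{X_u}]$. Since the pairing is $R(T)$-bilinear and, by Proposition \ref{p.2.1}, $\langle \xi^w, [\co_{X_u}]\rangle = \del_{u,w}$, only the term $w=u$ survives, giving
\[
e_{v,u} = \langle [\co_{X^v}], [\co_{X_u}]\rangle = \chi\bigl(G/B,\, [\co_{X_u}]\otimes [\co_{X^v}]\bigr).
\]
The Schubert variety $X_u$ and the opposite Schubert variety $X^v$ meet properly, so by \cite[Lemma 1]{Bri:02} one has $[\co_{X_u}]\otimes[\co_{X^v}] = [\co_{X_u\cap X^v}]$ in $K_T(G/B)$, whence $e_{v,u} = \chi(X_u\cap X^v,\co_{X_u\cap X^v})$.

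Next I would evaluate this Euler characteristic exactly as in the proof of Proposition \ref{p.2.1}: by \cite[Proposition 1]{BrLa:03}, $\chi(\co_{X_u\cap X^v})$ equals $1$ if $X_u\cap X^v\neq\emptyset$ and $0$ otherwise, and the standard nonemptiness criterion for Richardson varieties gives $X_u\cap X^v\neq\emptyset \Leftrightarrow v\leq u$. Hence $e_{v,u}=1$ when $v\leq u$ and $e_{v,u}=0$ otherwise, which is the first claim.

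Finally, the relation just proved reads $[\co_{X^v}] = \sum_{w\geq v}\xi^w$, i.e. the transition matrix from $\{\xi^w\}$ to $\{[\co_{X^w}]\}$ is the zeta function of the Bruhat order $(W,\leq)$. Inverting it by M\"obius inversion yields
\[
\xi^v = \sum_{w\geq v}\mu(v,w)\,[\co_{X^w}] = \sum_{w\in W}\mu(v,w)\,[\co_{X^w}],
\]
where $\mu$ is the M\"obius function of $(W,\leq)$; its explicit value $\mu(v,w) = (-1)^{\ell(w)-\ell(v)}$ for $v\leq w$ is the classical computation of the M\"obius function of Bruhat order. I expect no real obstacle here: the only points to verify are the properness of the intersection and the nonemptiness criterion, both of which are standard facts about Richardson varieties already invoked in the proof of Proposition \ref{p.2.1}.
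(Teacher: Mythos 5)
Your proposal is correct and follows essentially the same route as the paper: computing $e_{v,w}$ via the duality pairing of Proposition \ref{p.2.1}, reducing to $\chi(X_w\cap X^v,\co_{X_w\cap X^v})$ using \cite[Lemma 1]{Bri:02} and evaluating it by \cite[Proposition 1]{BrLa:03} together with the nonemptiness criterion $v\leq w$, then inverting the resulting unitriangular (zeta) matrix by M\"obius inversion, for which the paper cites \cite[\S3]{Deo:77} where you invoke the classical value $(-1)^{\ell(w)-\ell(v)}$.
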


  \begin{proof}   By Proposition \ref{p.2.1},
  $$
  e_{v,w} =  \langle  [\co_{X_w} ] , [\co_{X^v} ] \rangle = \chi(G/B,  [\co_{X_w} ] \cdot [\co_{X^v} ] ).
  $$
  Since
 $X_w\cap X^v$ is a proper intersection,
 by \cite[Lemma 1]{Bri:02},
 $[\co_{X_w} ] \cdot [\co_{X^v} ] = [\co_{X_w\cap X^v} ]$.
 Thus, $e_{v,w} =  \chi  ( X_w\cap X^v, \co_{X_w\cap X^v} )$.
By \cite[Proposition 1]{BrLa:03} (cf. proof of Proposition \ref{p.2.1}),
 \[
\chi ( X_w\cap X^v, \co_{X_w\cap X^v} ) =1 \text{ (or 0)}
  \]
according as $X_w\cap X^v$ is nonempty (or empty), i.e.,
  \begin{align*}
\chi ( X_w\cap X^v,\co_{X_w\cap X^v} ) &= 1,\quad\text{if } v\leq w \\
&= 0 \quad\text{otherwise.}
  \end{align*}
This proves the first part of the lemma.

Define the matrix
  \[   E = (e_{v,w})_{v,w\in W} .  \]
Then, by \cite[\S3]{Deo:77}, $E^{-1}$ is the M\"obius function, i.e.,
  \[   ( E^{-1} )_{v,w} = \mu(v,w).  \]
From this the second part of the lemma follows.
  \end{proof}

We can now state the second relation between the structure constants.

  \begin{proposition} \label{p.4.4}
For any $u,v,w\in W$,
  \beqn \label{e.4.4.1}
c^w_{u,v} = (-1)^{\ell (w)}
\sum_{\substack{u\leq y\\ v\leq z\\ \theta\leq w}}
 (-1)^{\ell
(\theta )} p^{\theta}_{y,z} .
  \eeqn
 Similarly,
  \beqn \label{e.4.4.2}
p^w_{u,v} = (-1)^{\ell (u)+\ell (v)} \sum_{\substack{u\leq y\\ v\leq z\\
\theta\leq w}} (-1)^{\ell (y)+\ell (z)}\, c^{\theta}_{y,z}.
  \eeqn
  \end{proposition}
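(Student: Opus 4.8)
The plan is to treat this as a pure change-of-basis computation, feeding the transition matrices of Lemma \ref{l.4.3} into the defining relations for the two families of structure constants. Recall from that lemma that $[\co_{X^v}] = \sum_{w\geq v}\xi^w$ and, dually, $\xi^v = \sum_w \mu(v,w)[\co_{X^w}]$, where $\mu(v,w) = (-1)^{\ell(v)+\ell(w)}$ for $v\leq w$ and $0$ otherwise. Everything reduces to substituting these two expansions into the products $[\co_{X^u}][\co_{X^v}]$ and $\xi^u\xi^v$ and reading off coefficients.

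For \eqref{e.4.4.1}, I would first expand each factor of the product $[\co_{X^u}][\co_{X^v}]$ into the dual basis, obtaining $[\co_{X^u}][\co_{X^v}] = \sum_{y\geq u,\, z\geq v}\xi^y\xi^z$. Substituting the definition $\xi^y\xi^z = \sum_\theta p^\theta_{y,z}\xi^\theta$ and then re-expanding each $\xi^\theta$ back into the structure sheaf basis via $\xi^\theta = \sum_w \mu(\theta,w)[\co_{X^w}]$ isolates the coefficient of $[\co_{X^w}]$, which by definition is $c^w_{u,v}$. Reading off this coefficient gives $c^w_{u,v} = \sum_{\theta\leq w}\mu(\theta,w)\sum_{y\geq u,\, z\geq v}p^\theta_{y,z}$, and pulling the factor $(-1)^{\ell(w)}$ out of $\mu(\theta,w)$ yields exactly \eqref{e.4.4.1}.

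For \eqref{e.4.4.2}, the identical argument runs with the roles of the two bases interchanged: expand $\xi^u\xi^v = \sum_{y,z}\mu(u,y)\mu(v,z)[\co_{X^y}][\co_{X^z}]$, insert $[\co_{X^y}][\co_{X^z}] = \sum_\theta c^\theta_{y,z}[\co_{X^\theta}]$, and convert each $[\co_{X^\theta}]$ back to the dual basis using $[\co_{X^\theta}] = \sum_{w\geq\theta}\xi^w$. The coefficient of $\xi^w$ is then $p^w_{u,v} = \sum_{\theta\leq w}\sum_{y\geq u,\, z\geq v}\mu(u,y)\mu(v,z)\,c^\theta_{y,z}$, and factoring $(-1)^{\ell(u)+\ell(v)}$ out of the product $\mu(u,y)\mu(v,z) = (-1)^{\ell(u)+\ell(v)+\ell(y)+\ell(z)}$ produces \eqref{e.4.4.2}.

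There is no genuine analytic or geometric obstacle here: both identities are formal consequences of the fact that the transition matrix $E = (e_{v,w})$ of Lemma \ref{l.4.3} is unitriangular with inverse given by the M\"obius function. The only step demanding care is the bookkeeping, namely correctly tracking the Bruhat-order constraints $u\leq y$, $v\leq z$, $\theta\leq w$ that arise as the supports of $\mu$ and of $E$, and checking that the signs from $\mu$ assemble into the prefactors $(-1)^{\ell(w)}$ and $(-1)^{\ell(u)+\ell(v)}$ respectively. Since the $c$'s and $p$'s are inverse-related under this base change, the two formulas are in fact equivalent, so it would suffice to establish one and deduce the other by M\"obius inversion; I would nonetheless write out both expansions directly, as each is a one-line manipulation.
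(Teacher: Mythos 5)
Your proposal is correct and follows essentially the same route as the paper's own proof: the paper likewise expands $[\co_{X^u}][\co_{X^v}]$ (respectively $\xi^u\xi^v$) via the transition matrices $e_{v,w}$ and $\mu(v,w)$ of Lemma \ref{l.4.3}, inserts the defining relations for $p^{\theta}_{y,z}$ (respectively $c^{\theta}_{y,z}$), converts back to the original basis, and equates coefficients, with the Bruhat constraints and the sign prefactors assembling exactly as you describe.
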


  \begin{proof}  By Lemma \ref{l.4.3},
  \begin{align*}
[\co_{X^u}]\, [\co_{X^v}] &=  (\sum_y\, e_{u,y}\, \xi^y )\cdot \Bigl( \sum_z \, e_{v,z}\, \xi^z\Bigr) \\
&= \sum_{y,z}\, e_{u,y} e_{v,z}\, \xi^y\xi^z \\
&= \sum_{y,z,\theta} e_{u,y}e_{v,z}\, p^{\theta}_{y,z}\xi^{\theta} \\
&= \sum_{y,z,\theta ,w} e_{u,y}e_{v,z}\, p^{\theta}_{y,z}\, \mu (\theta ,w)
[\co_{X^w}] \\
&= \sum_{\substack{u\leq y\\ v\leq z\\ \theta\leq w}} p^{\theta}_{y,z}\,
(-1)^{\ell (\theta )+\ell (w)} [\co_{X^w}] .
  \end{align*}
Thus, equating the coefficients in the $\{ [\co_{X^w}]\}_w$ basis, we get
  \[
c^w_{u,v} = (-1)^{\ell (w)} \sum_{\substack{ u\leq y\\ v\leq z\\
\theta\leq w}}\, (-1)^{\ell (\theta )} p^{\theta}_{y,z} .
  \]
To prove \eqref{e.4.4.2}, write by Lemma \ref{l.4.3},
  \begin{align*}
\xi^u \xi^v &= \Bigl( \sum_y\, \mu (u,y) [\co_{X^y}]\Bigr)
\cdot \Bigl( \sum_z\, \mu (v,z)[\co_{X^z}]\Bigr) \\
&= \sum_{y,z}\, \mu (u,y)\mu(v,z)[\co_{X^y}]\cdot [\co_{X^z}]\\
&= \sum_{y,z,\theta} \mu (u,y)\mu (v,z)\, c^{\theta}_{y,z} [\co_{X^\theta} ] \\
&= \sum_{y,z,\theta ,w} \mu (u,y) \mu (v,z)\, c^{\theta}_{y,z} e_{\theta ,w}
 \xi^w .
  \end{align*}
Thus,
  \begin{align*}
p^w_{u,v} &= \sum_{y,z,\theta} \mu (u,y)\mu (v,z)\, c^{\theta}_{y,z}
e_{\theta ,w} \\
&= \sum_{\substack{u\leq y\\ v\leq z \\ \theta \leq w}} (-1)^{\ell (u)+\ell (y)+\ell (v)+\ell (z)}\,
c^{\theta}_{y,z}.
  \end{align*}
This proves the proposition.
  \end{proof}

  \section{Multiplicative structure constants lie in $\bz
[e^{-\beta}-1]$}  \label{s.multiplicative}
 Let $Z$ denote the center of $G$, and let $T' = T/Z$.  The map $T \to T'$ induces
  an injection $R(T') \hookrightarrow R(T)$ whose image is
   the subring $\bz [e^{\beta}]_{\beta \in \Del}$ of $R(T)$.
  Of course,  $\bz [e^{\beta}]_{\beta \in \Del} = \bz [e^{\beta}-1]_{\beta \in \Del}$; writing
  the ring in this way emphasizes the relationship with the positivity conjectures.

The main result of this section is the following theorem concerning
the structure constants in the dual structure sheaf basis, in the case $P=B$.

  \begin{theorem}  \label{t.structure}
  With the notation as in Conjecture \ref{conj.GK}, for any $u,v,w\in W$, we have
  \[
p^w_{u,v} \in \bz [e^{-\beta}-1]_{\beta\in\Del^+}.
  \]
  \end{theorem}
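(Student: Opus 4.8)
The plan is to separate the claim into two features of the ring $A:=\bz[e^{-\beta}-1]_{\beta\in\Del^+}$. First note that $A=\bz[e^{-\beta}]_{\beta\in\Del^+}$ is precisely the $\bz$-span of $\{e^{-\mu}\}_{\mu\in Q^+}$, a proper subring of $R(T')=\bz[e^{\beta}-1]_{\beta\in\Del}=R(T/Z)$ recalled at the start of this section. Thus proving $p^w_{u,v}\in A$ amounts to showing (i) that $p^w_{u,v}$ involves only characters $e^{\mu}$ with $\mu$ in the root lattice $Q$, and (ii) that in fact only $\mu\in -Q^+$ occur. I would prove (i) by descent to $T'$ and treat (ii), the genuinely harder point, by a cohomological effectivity argument.

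For (i): since $Z\subseteq T$ is central and contained in $B$, it acts trivially on $G/B$, so the $T$-action factors through $T'=T/Z$ and there is a ring homomorphism $\phi\colon K_{T'}(G/B)\to K_T(G/B)$ restricting to the inclusion $R(T')\hookrightarrow R(T)$. Each opposite Schubert variety $X^w$ and its boundary $\partial X^w$ is $Z$-stable, and the equivariant structure on the structure sheaf of an invariant subscheme is canonical; hence $\xi^w=[\co_{X^w}(-\partial X^w)]$ lies in the image of $\phi$, say $\xi^w=\phi(\tilde\xi^w)$. The Bruhat cells $B^-wB/B$ are $T$-stable affine spaces on which $Z$ acts trivially, so the argument proving that $\{\xi^w\}$ is an $R(T)$-basis of $K_T(G/B)$ shows verbatim that $\{\tilde\xi^w\}$ is an $R(T')$-basis of $K_{T'}(G/B)$. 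Writing $\tilde\xi^u\tilde\xi^v=\sum_w\tilde p^w_{u,v}\,\tilde\xi^w$ with $\tilde p^w_{u,v}\in R(T')$ and applying the ring map $\phi$ gives $\xi^u\xi^v=\sum_w\tilde p^w_{u,v}\,\xi^w$; by uniqueness of the expansion in the $R(T)$-basis $\{\xi^w\}$ we conclude $p^w_{u,v}=\tilde p^w_{u,v}\in R(T')$.

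For (ii) I would compute the constants as Euler characteristics and bound the weights that appear. By Proposition \ref{p.2.1}, $p^w_{u,v}=\chi(G/B,\ \xi^u\otimes\xi^v\otimes[\co_{X_w}])$, and it suffices to show that every character occurring in this class lies in $\sum_{\mu\in Q^+}\bz\,e^{-\mu}$. Using Proposition \ref{p.bases} to replace $\xi^u$ and $\xi^v$ by dualizing sheaves twisted by $\cl(\rho)$, and using \cite[Lemma~1]{Bri:02} to reduce tensor products to structure (and ideal) sheaves on the relevant intersections of Schubert and opposite Schubert varieties, one is reduced---after the vanishing of higher cohomology guaranteed by rational singularities---to the $T$-module of sections of an effective line bundle on such an intersection. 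As in the proof of Proposition \ref{p.positive}, these sections form a subquotient of a space $H^0(G/B,\cl(\lambda))$, a dual Weyl module all of whose weights are bounded above by its highest weight; the normalizing twists by $e^{-\rho}$ coming from Proposition \ref{p.bases} then force the surviving characters into $-Q^+$. Combined with (i), this yields $p^w_{u,v}\in A$.

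The hard part is (ii), and specifically the appearance of two opposite Schubert classes $\xi^u,\xi^v$ in the triple product: whereas a single opposite Schubert class paired against a Schubert class gives an irreducible Richardson variety (with rational singularities, to which the equivariant form of \cite[Theorem~4]{Bri:02} quoted in the proof of Proposition \ref{p.positive} applies directly), the intersection $X^u\cap X^v$ of two like-oriented Schubert varieties is in general reducible, so the clean ``single highest weight'' bound must be replaced by a more careful accounting over its components. An alternative, purely algebraic, route would be to establish a Chevalley-type recursion for multiplication in the basis $\{\xi^w\}$ with all coefficients manifestly in $A$, using Proposition \ref{p.positive} and Remark \ref{r.positive} (the cases $v=e$ and $v=s_i$) as the base of an induction; the obstacle there is that $K$-theoretic products carry terms of arbitrarily high length, so the induction cannot be organized by $\ell(v)$ alone and must instead be controlled by the augmentation filtration of $A$.
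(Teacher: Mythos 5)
Your step (i) is correct, and it plays the same role as the first step of the paper's proof (the paper descends to $R(T')$ via the structure sheaf basis, the cellular fibration lemma of \cite[Lemma 5.5.1]{ChGi:97}, and Proposition \ref{p.4.4}, whereas you descend the $\xi$-basis directly; both work). The genuine gap is in step (ii), and it sits exactly where you flag it, but it is worse than a bookkeeping issue over reducible components: the intersection $X^u\cap X^v$ of two $B^-$-stable Schubert varieties is in general not proper (already $X^u\cap X^u=X^u$ has codimension $\ell(u)$, not $2\ell(u)$), so \cite[Lemma 1]{Bri:02} simply does not apply; higher Tor sheaves contribute, $[\co_{X^u}]\cdot[\co_{X^v}]\neq[\co_{X^u\cap X^v}]$ in general, and there is no effective line bundle on a Richardson-type variety whose sections compute the class $\xi^u\otimes\xi^v\otimes[\co_{X_w}]$. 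The highest-weight bound you want to import from the proof of Proposition \ref{p.positive} works there only because one of the two factors paired against $\xi^u$ is the line bundle $\cl(-\rho_S)$, so that Brion's duality \cite[Theorem 4]{Bri:02} applies to $\chi(X_w\cap X^u,\cl(-\rho_S)(-X_w\cap\partial X^u))$; there is no analogue of that theorem for a product of two dual classes, and rewriting $\xi^u\xi^v=e^{-2\rho}[\cl(2\rho)][\om_{X^u}][\om_{X^v}]$ via Proposition \ref{p.bases} does not help, since $[\om_{X^u}][\om_{X^v}]$ faces the same improper intersection. Your fallback (a Chevalley-type recursion seeded by the cases $v=e,s_i$) is likewise only a gesture, and, as you note yourself, it lacks a well-founded induction.

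For comparison, the paper avoids intersections entirely and argues by localization at $T$-fixed points. By Proposition \ref{p.bases}(c), $\xi^w=*\tau^{w^{-1}}$ in the Kostant--Kumar basis, so $p^w_{u,v}=*a^w_{u,v}$, where $a^w_{u,v}$ are the $\tau$-structure constants. Since $p^w_{u,v}=0$ unless $u,v\leq w$, and $\tau^{\delta^{-1}}(w^{-1})=0$ for $\delta\not\leq w$ by \cite[Proposition 2.22(b)]{KoKu:90}, restricting $\tau^{u^{-1}}\tau^{v^{-1}}$ to the fixed point $w^{-1}$ gives
\begin{equation*}
\tau^{u^{-1}}(w^{-1})\,\tau^{v^{-1}}(w^{-1})=\sum_{\theta<w}a^{\theta}_{u,v}\,\tau^{\theta^{-1}}(w^{-1})+a^w_{u,v}\,\tau^{w^{-1}}(w^{-1}).
\end{equation*}
By \cite{Wil:07} or \cite{Gra:02}, each restriction $\tau^{\theta^{-1}}(w^{-1})$ lies in $\bz[e^{\beta}-1]_{\beta\in\Del^+}$, while $\tau^{w^{-1}}(w^{-1})=\prod_{\nu\in w\Del^-\cap\Del^+}(1-e^{\nu})$ is a polynomial in the $e^{\alpha_i}$ with constant term $1$. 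Induction on $\ell(w)$ then puts $a^w_{u,v}\,\tau^{w^{-1}}(w^{-1})$ in $\bz[e^{\alpha_1},\dots,e^{\alpha_\ell}]$, and dividing by $\tau^{w^{-1}}(w^{-1})$ --- this is precisely where your step (i), i.e.\ $a^w_{u,v}\in R(T')$, is needed to exclude poles and non-integral coefficients --- yields $a^w_{u,v}\in\bz[e^{\beta}]_{\beta\in\Del^+}$, hence $p^w_{u,v}\in\bz[e^{-\beta}-1]_{\beta\in\Del^+}$. So the descent half of your proposal survives and is in fact an essential ingredient, but the heart of the theorem requires the fixed-point/Kostant--Kumar input (or some equivalent localization data), for which your cohomological sketch, by your own admission, offers no substitute.
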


  \begin{proof}
  We first show that $p_{u,v}^w \in R(T')$.
   Because $Z$ acts trivially on $G/B$, the action of $T$ on $G/B$ factors through
   the action of $T' = T/Z$.  Therefore, there is a canonical map $K_{T'}(X) \to K_T(X)$ compatible
   with the map $R(T') \to R(T)$.  By the cellular fibration lemma \cite[Lemma 5.5.1]{ChGi:97},
   $K_{T'}(X)$ is free over $R(T')$ and the
   classes of $\co_{X_w}$ in $K_{T'}(X)$ form a basis.  Since the class of $\co_{X_w}$
   in $K_{T'}(X)$ maps to the class of the same sheaf in $K_T(X)$, and the map
   $K_{T'}(X) \to K_T(X)$ is a ring homomorphism, the structure constants $b_{u,v}^w$of the multiplication
   in $K_T(X)$ with respect to this basis must be the images
   of the corresponding structure constants in $K_{T'}(X)$, and hence must lie
   in $R(T')$.  Thus, by \eqref{e.4.4.2},  $p_{u,v}^w$ must lie in $R(T')$ as well.

  By Proposition \ref{p.bases}(c),
  \beqn \label{e.structure1}
\xi^w = *\tau^{w^{-1}}, \text{  for any } w\in W.
  \eeqn
  By \cite[Proposition 2.22(h)]{KoKu:90}, $p^w_{u,v}=0$ unless $u,v\leq w$.
We will prove the proposition by induction on $\ell (w)$.
 The proposition is true for $w=1$ since  $p^1_{u,v}=0$ unless $u=v=1.$  Moreover,
 $p^1_{1,1} = 1$, since by
\cite[Proposition 2.22(b),(f)]{KoKu:90},
  \[
\tau^1 \tau^1 = \tau^1 + \sum_{w\neq e} d^w\tau^w,   \]
 for some $ d_w\in R(T)$.

Fix
$u,v\in W$ and assume by induction that $p^{\theta}_{u,v}\in\bz
[e^{-\beta}-1]$ for all $\theta <w$. Write
  \[
\tau^{u^{-1}}\cdot \tau^{v^{-1}} = \sum_{\theta <w}
a^{\theta}_{u,v}\tau^{\theta^{-1}} + a^w_{u,v}\tau^{w^{-1}} +
\sum_{\del\not\leq w} a^{\del}_{u,v}\tau^{\del^{-1}} .
  \]
  By \eqref{e.structure1},
    \beqn \label{e.structure2}
a^w_{u,v} = \overline{p}^w_{u,v}, \text{  for any } u,v,w\in W.
  \eeqn
Now,
  \beqn \label{e.structure3}
\tau^{u^{-1}}(w^{-1}) \tau^{v^{-1}}(w^{-1})= \sum_{\theta <w}
a^{\theta}_{u,v}\tau^{\theta^{-1}}(w^{-1}) + a^w_{u,v}
\tau^{w^{-1}}(w^{-1}),
  \eeqn
as $\tau^{\del^{-1}}(w^{-1})=0$ for $\del\not\leq w$ by \cite[Proposition 2.22(b)]{KoKu:90}.  By \cite{Wil:07} or \cite{Gra:02},
  \beqn \label{e.structure4}
  \tau^{\theta^{-1}}(w^{-1}) \in \bz [e^{\beta}-1]_{\beta\in\Del^+}.
    \eeqn
Moreover, by \cite[Proposition 2.22(b)]{KoKu:90},
  \beqn \label{e.structure5}
\tau^{w^{-1}}(w^{-1}) = \prod_{\nu\in w\Del^-\cap\Del^+} (1-e^{\nu}).
  \eeqn

  Let $x_i := e^{\al_i}$, $1\leq i\leq\ell$, where $\{\al_1,\dots
  ,\al_{\ell}\}$ are the simple roots.  Then, $R := \bz [e^{\beta}]_{\beta \in \Del^+}$
  is the polynomial ring $\bz [x_1,\dots ,x_{\ell}]$, and
    \[
R  \subset R(T') := \bz  [ x_1^{\pm
1},\dots ,x^{\pm 1}_{\ell} ] .
  \]
 We have proved earlier that $a^w_{u,v}$ is in $R(T')$; thus, to prove the theorem,
 it suffices by  \eqref{e.structure2}
to show that $a^w_{u,v} \in R$.
    By \eqref{e.structure2}--\eqref{e.structure4}, and induction,
  \beqn \label{e.structure6}
    a^w_{u,v}\, \tau^{w^{-1}}(w^{-1}) \in R.
    \eeqn
Moreover, by \eqref{e.structure5}, $\tau^{w^{-1}}(w^{-1})$ is in $R$ and does not
vanish at 0, so $a^w_{u,v}$ has no pole at 0.  Hence,
  \[   a^w_{u,v} \in \bc [x_1,\dots ,x_{\ell}] .   \]

  We next show that the coefficient of each monomial in $a^w_{u,v}$ must
be an integer, so that $a^w_{u,v}\in R$.  Write
  \[
a^w_{u,v} = \sum_{\underline{d}\in \bz_+^{\ell}}
c_{\underline{d}}\underline{x}^{\underline{d}}, \text{ for}\, c_{\underline{d}}
\in \Bbb C, \]
where
$\underline{x}^{\underline{d}} := x_1^{d_1}\cdots
x_{\ell}^{d_{\ell}}$ for $\underline{d} = (d_1,\dots ,d_{\ell}).$
Choose, if possible,  $c_{\underline{d}^o} \notin \bz$ so that $|\underline{d}^o|
:= d^o_1 + \cdots +d^o_{\ell}$ is minimum with this property.  Then, \eqref{e.structure6}
implies that
  \[
\Bigl( \sum_{|\underline{d}|\geq |\underline{d}^0|}
c_{\underline{d}} \, \underline{x}^{\underline{d}}\Bigr)\,
\tau^{w^{-1}}(w^{-1}) \in R,
  \]
which is a contradiction, since the constant term of
$\tau^{w^{-1}}(w^{-1})$ is $1$.  Hence, $a^w_{u,v} \in R$,
as desired.
  \end{proof}

  \begin{corollary} \label{5.2} For any $u,v,w \in W$, we have
 that $c^w_{u,v}\in\bz  [
e^{-\beta}-1 ]_{\beta\in\Del^+}$.
  \end{corollary}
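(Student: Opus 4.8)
The plan is to reduce the statement for the structure-sheaf constants $c^w_{u,v}$ to the already-established Theorem~\ref{t.structure} for the dual-basis constants $p^w_{u,v}$, using the explicit relation between the two families proved in Proposition~\ref{p.4.4}. No new geometry is needed; the corollary is a formal consequence of the theorem together with a combinatorial inversion.

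First I would recall equation \eqref{e.4.4.1}, which expresses
\[
c^w_{u,v} = (-1)^{\ell(w)} \sum_{\substack{u\leq y\\ v\leq z\\ \theta\leq w}} (-1)^{\ell(\theta)}\, p^{\theta}_{y,z}.
\]
This exhibits $c^w_{u,v}$ as a finite $\bz$-linear combination, with coefficients $\pm 1$, of the constants $p^{\theta}_{y,z}$, where $\theta,y,z$ range over the Bruhat conditions $\theta\leq w$, $u\leq y$, $v\leq z$ inside the (finite) Weyl group $W$.

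Next I would invoke Theorem~\ref{t.structure}, which guarantees $p^{\theta}_{y,z}\in\bz[e^{-\beta}-1]_{\beta\in\Del^+}$ for every choice of indices. The final observation is that $\bz[e^{-\beta}-1]_{\beta\in\Del^+}$ is a subring of $R(T)$ --- by definition it is the $\bz$-subalgebra generated by the elements $e^{-\beta}-1$ --- and in particular it is closed under addition and under multiplication by integers. Hence any $\bz$-linear combination of its elements again lies in it, and the displayed formula for $c^w_{u,v}$ immediately yields $c^w_{u,v}\in\bz[e^{-\beta}-1]_{\beta\in\Del^+}$.

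There is essentially no obstacle to overcome: the only point worth noting is that the index sets are finite (since $W$ is finite), so the sum genuinely lands in the ring itself rather than in some completion. I note that one could equally run the argument through \eqref{e.4.4.2} in the reverse direction; but since the input we are given is the positivity/membership statement on the $p$-side, the direction \eqref{e.4.4.1}, which writes $c$ in terms of $p$, is the one to apply.
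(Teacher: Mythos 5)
Your proof is correct and is essentially the paper's own argument: the paper deduces Corollary \ref{5.2} precisely by combining Theorem \ref{t.structure} with Proposition \ref{p.4.4}, exactly as you do via \eqref{e.4.4.1}, with the same observation that the signed sum over the finite Weyl group stays in the subring $\bz[e^{-\beta}-1]_{\beta\in\Del^+}$. (The paper also mentions, without details, an alternative proof modeled on that of Theorem \ref{t.structure} using the fixed-point pullback formula of \cite{Gra:02}, but its primary route is yours.)
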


\begin{proof}
This follows by combining Theorem \ref{t.structure} and Proposition
\ref{p.4.4}.  Alternatively, it can be deduced by an argument
similar to the proof of Theorem \ref{t.structure}, using the formula
of \cite{Gra:02}
for the pullback of elements of the structure sheaf basis to $T$-fixed
points.
\end{proof}

  \section{Positivity in equivariant $K$-theory of $\Bbb P^n$} \label{s.pn}
  In this section we prove an explicit formula for the structure constants
  in the dual structure sheaf basis in case $G/P = \bp^n$.  We use this to deduce a recurrence
  relation which implies Conjecture \ref{conj.GK} in this case.  We give the
  analogous results for the structure sheaf basis.
  This section can be read independently of the previous sections, except
  for references to a few results.

\subsection{Preliminary results on $K_T(\Bbb P^n)$}
Let $X=\Bbb P^n$ with projective coordinates $[x_1, \dots , x_{n+1}]$. Thus, $X=G/P$,
 where $G=SL_{n+1}$ and $P=\stab [1,0, \dots ,0]$.  Let $T=\{
(t_1, \dots , t_{n+1})\in (\bc^*)^{n+1} | \prod t_i=1\}$ be
 the maximal torus of $G$  acting on $X$ in the obvious way.
 Let $B$ denote the
set of upper triangular matrices in $G$.   Let $e^{\eps_i}\in\hat{T}$ be defined
by $e^{\eps_i}(t_1, \dots ,t_{n+1}) = t_i$, where $\hat{T}$ is the group of characters
of $T$. Written additively, we denote $e^{\eps_i}$ by $\eps_i$ itself.
 Then, the set of positive roots
is $\Del^+ = \{ \eps_i-\eps_j | 1\leq i < j\leq n+1\}$.
 Let $\chi_i =
\eps_1+\cdots +\eps_i, i=1, \dots , n+1$; for $i\leq n$ these are the
fundamental (dominant) weights.  The elements of $W^P$
can be identified with the set of integers $[n]:=\{0, 1, \ldots, n\}$.  In this section
we deviate from our convention in the rest of the paper and, for any $u\in [n]$,
simply write $X_u$, $X^u, \xi^u$ for $X_u^P, X^u_P$
and $\xi^u_P$ respectively.
Then, $X_u = \{ [x_1, \dots ,x_{u+1},0, \dots ,0]\}$ and
$X^u = \{ [0, \dots ,0,x_{u+1}, \dots , x_{n+1}]\}$.   Note that $\dim X_u =\codim
X^u =u$, and the intersections $X_u\cap X^u$ are transverse.
We will write simply $p_{u,v}^w$ for the structure
constants with respect to the basis $\{ \xi^u \}$ of $K_T(\Bbb P^n)$,
and $b_{u,v}^w$ for the structure
constants with respect to the basis $\{ [\co_{X_u} ] \}$.

Let
$V(\mu_1, \dots , \mu_k)$ denote the $T$-module with weights
$\mu_1, \dots ,\mu_k$.  Let $E_p(x_1, \dots ,x_r)$ denote the $p$-th elementary symmetric function in
the variables $x_1, \dots ,x_r$.  Recall that $D$ denotes the diagonal
embedding of $X$ in $X \times X$.

  \begin{lemma}  \label{l.pn.2}  If $\gam \in K_T(X)$, then, for any $u,w\in [n],$
  \begin{multline*}
\chi  ( X\times X, D_*[\co_{X_w}]\otimes ( [\co_{X^u}(-\partial
 X^u)] \boxtimes\gam ) )\\
= \chi (X_w\cap X^u, \, \gam ) - \chi (X_w\cap\partial X^u,\, \gam ).\qquad\qquad
  \end{multline*}
 \end{lemma}

  \begin{proof}  Arguing as in the proof of Proposition \ref{p.pn.1}, we get
  that the left hand side
equals $\chi(X,[\co_{X_w}]\otimes [\co_{X^u} (-\partial X^u)]\otimes\gam
)$.  This in turn equals
\[\chi (X, [\co_{X_w}]\otimes [\co_{X^u}]\otimes\gam
)-\chi (X, [\co_{X_w}]\otimes [\co_{\partial X^u}]\otimes\gam ).\]
  Since $X_w$ intersects $X^u$ and $\partial X^u$ properly and
these varieties are Cohen-Macaulay, Brion's result \cite[Lemma 1]{Bri:02} implies that
$[\co_{X_w}]\otimes [\co_{X^u}] = [\co_{X_w\cap X^u}]$ and
$[\co_{X_w}]\otimes [\co_{\partial X^u}] = [\co_{X_w\cap \partial X^u}]$.
Hence the above difference equals
  \[
\chi \Bigl( X,  [ \co_{X_w\cap X^u} ]\otimes\gam\Bigr) -
 \chi \Bigl( X,  [\co_{X_w\cap\partial X^u} ]\otimes\gam\Bigr) .
  \]
In general, if $Y$ is a closed subscheme of $X$,
the projection formula implies that $\chi (X,
[\co_Y]\otimes\gam ) = \chi (Y,\gam )$.  The result follows.
  \end{proof}

For any $n\in \Bbb Z$, the character $e^{n\eps_1}$ of $T$ extends to a character of $P$.
As earlier,  let $\bc_{n\eps_1}$ denote the corresponding $P$-module and
let $\cl (n\eps_1)$
denote the line bundle  $G \times_{P}\bc_{-n\eps_1}$ on $X=G/P=\Bbb P^n$.

  \begin{lemma} \label{l.pn.3} For any $n\in \Bbb Z$,
  $\cl (n\eps_1) \cong \co_X(n)$ as $G$-equivariant line bundles, where $\co_X(1)$
  denotes the dual of the tautological bundle.
  \end{lemma}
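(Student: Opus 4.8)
The plan is to use the standard equivalence between $G$-equivariant line bundles on $G/P$ and characters of $P$: since $G$ acts transitively on $X=G/P$, a $G$-equivariant line bundle $\ce$ is recovered as $G\times_P\ce_{eP}$ from its fiber $\ce_{eP}$ at the base point $eP=[1,0,\dots,0]$, regarded as a one-dimensional $P$-module. Thus, to prove $\cl(n\eps_1)\cong\co_X(n)$ as $G$-equivariant line bundles, it suffices to show that the fiber of $\co_X(n)$ at $eP$ is the $P$-module $\bc_{-n\eps_1}$, since by definition $\cl(n\eps_1)=G\times_P\bc_{-n\eps_1}$ has fiber exactly $\bc_{-n\eps_1}$ at $eP$.

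First I would compute the fiber of the tautological bundle $\co_X(-1)$ at $eP$. By definition the fiber of $\co_X(-1)$ at a line $\ell\in\bp^n$ is $\ell$ itself, so at $eP=[1,0,\dots,0]$ it is the coordinate line $\bc e_1\subset\bc^{n+1}$. The parabolic $P=\stab[1,0,\dots,0]$ consists of those $g\in G$ with $g\cdot\bc e_1=\bc e_1$; such $g$ acts on $e_1$ by $g e_1=g_{11}e_1$, so $P$ acts on the fiber $\bc e_1$ through the character $g\mapsto g_{11}$. Restricted to $T$ this character is $(t_1,\dots,t_{n+1})\mapsto t_1=e^{\eps_1}$, and it is precisely the extension of $\eps_1$ to $P$ recalled just before the lemma. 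Hence the fiber of $\co_X(-1)$ at $eP$ is the $P$-module $\bc_{\eps_1}$. Dualizing and taking the $n$-th tensor power, the fiber of $\co_X(n)=\co_X(-1)^{\otimes(-n)}$ at $eP$ is the $P$-module $\bc_{-n\eps_1}$, which matches the defining fiber of $\cl(n\eps_1)$, yielding the desired $G$-equivariant isomorphism.

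The computation itself is routine; the only points requiring care are the sign/inverse conventions (the definition $\cl(\lambda)=G\times_P\bc_{\lambda^{-1}}$ and the fact that $\co_X(1)$ is the \emph{dual} of the tautological bundle), which must be tracked so that both sides land on the character $-n\eps_1$. The one structural input is the reconstruction of a $G$-equivariant line bundle from its fiber $P$-module. Should one prefer to avoid this, an equivalent route is to check agreement of the two bundles $T$-equivariantly: compute the fiber weights at the two $T$-fixed points $eP$ and $s_1P$ of the $T$-stable line $X_1=\{[x_1,x_2,0,\dots,0]\}$, which are $(-n\eps_1,-n\eps_2)$ for $\cl(n\eps_1)$ and likewise for $\co_X(n)$, deduce the common degree $n$ and hence an ordinary isomorphism, and finally upgrade to a $T$-equivariant (thus $G$-equivariant, as $G$ has no nontrivial characters) isomorphism via the uniqueness-of-equivariant-structure statement used in the proof of Proposition \ref{p.bases}.
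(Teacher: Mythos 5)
Your proposal is correct and follows essentially the same route as the paper's proof: both arguments use that a $G$-equivariant line bundle on $G/P$ is determined by the character of $P$ on its fiber over the $P$-fixed point $[1,0,\dots,0]$, and both verify that $P$ acts by $e^{-n\eps_1}$ on each of $\cl(n\eps_1)$ and $\co_X(n)$. Your computation of the character via the tautological bundle (with the sign/dual conventions carefully tracked) just makes explicit what the paper states in one line.
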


  \begin{proof}  Both $\cl (n\eps_1)$ and $\co_X(n)$ are sheaves of sections of
  $G$-equivariant line bundles.  Hence, the line bundles are determined by
  the character of $P$ on the fiber over the $P$-fixed point [1,0, \dots ,0].
  Since $P$ acts by the character $e^{-n\eps_1}$ on each line bundle, the line
  bundles are isomorphic.
  \end{proof}

  \begin{lemma}  \label{l.pn.4}
  If $w\geq u$, then as $T$-equivariant coherent sheaves on $X_w \cap X^u$,
  \[
\om_{X_w\cap X^u} = \cl  ( (-w+u-1)\,\eps_1 ) \big|_{_{X_w\cap X^u}} \otimes e^{\chi_u-\chi_{w+1}} .
  \]
  \end{lemma}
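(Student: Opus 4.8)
The plan is to identify the intersection $Y := X_w \cap X^u$ explicitly, determine $\om_Y$ non-equivariantly, and then pin down the $T$-equivariant structure by matching a single fiber weight. First I would note that, since $X_w$ is cut out by $x_{w+2} = \cdots = x_{n+1} = 0$ and $X^u$ by $x_1 = \cdots = x_u = 0$, the intersection is the coordinate linear subspace
\[
Y = \{[0,\dots,0,x_{u+1},\dots,x_{w+1},0,\dots,0]\} \cong \bp^{w-u},
\]
with homogeneous coordinates $x_{u+1},\dots,x_{w+1}$; in particular $Y$ is smooth of dimension $w-u$, so its dualizing sheaf $\om_Y$ is just its canonical line bundle. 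Because $Y$ is a linear subspace, $\co_X(1)\big|_Y \cong \co_Y(1)$, and the canonical bundle of $\bp^{w-u}$ is $\co_Y(-(w-u)-1)$. Combining this with Lemma \ref{l.pn.3}, I obtain a \emph{non-equivariant} isomorphism
\[
\om_Y \cong \co_Y(-w+u-1) \cong \cl((-w+u-1)\eps_1)\big|_Y .
\]

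Next I would upgrade this to a $T$-equivariant statement. Both sides of the asserted identity are $T$-equivariant line bundles on the irreducible projective $T$-variety $Y$ that agree non-equivariantly, so their "difference" is a rank-one reflexive sheaf that is non-equivariantly trivial. By the uniqueness principle for equivariant structures used in the proof of Proposition \ref{p.bases} (a rank-one reflexive sheaf on an irreducible projective $T$-variety carries at most one $T$-equivariant structure inducing a prescribed — here trivial — $T$-module on the stalk at a $T$-fixed point), it therefore suffices to verify that $\om_Y$ and the proposed right-hand side carry the \emph{same} $T$-weight on the fiber over one $T$-fixed point of $Y$.

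I would carry out this check at the fixed point $[e_{u+1}]\in Y$. The tangent space $T_{[e_{u+1}]}Y = \Hom(\langle e_{u+1}\rangle, \langle e_{u+2},\dots,e_{w+1}\rangle)$ has weights $\eps_j - \eps_{u+1}$ for $u+2\le j\le w+1$, so the fiber of $\om_Y$, being $\wed^{\mathrm{top}}$ of the cotangent space, has weight $(w-u)\eps_{u+1} - (\eps_{u+2}+\cdots+\eps_{w+1})$. On the other side, the fiber of $\co_X(1)$ at $[e_i]$ has weight $-\eps_i$, so $\cl((-w+u-1)\eps_1)\big|_Y$ contributes $(w-u+1)\eps_{u+1}$ at $[e_{u+1}]$, while the twist $e^{\chi_u-\chi_{w+1}} = e^{-(\eps_{u+1}+\cdots+\eps_{w+1})}$ shifts every fiber weight by $\chi_u-\chi_{w+1}$; their sum is again $(w-u)\eps_{u+1} - (\eps_{u+2}+\cdots+\eps_{w+1})$. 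Since the two weights coincide, the equivariant structures agree and the lemma follows. The only real subtlety — and the step I would be most careful with — is the bookkeeping of weight conventions (tangent versus fiber weights, and the fiber weight of $\co_X(1)$ at each coordinate point); everything else is a short linear computation once $Y$ is recognized as a coordinate $\bp^{w-u}$.
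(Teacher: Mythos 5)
Your proof is correct, but it pins down the equivariant structure by a genuinely different mechanism than the paper. Both arguments begin identically: identify $X_w\cap X^u$ as the coordinate subspace $\bp^{w-u}$ and use the non-equivariant isomorphism $\om_{\bp^k}\cong\co_{\bp^k}(-(k+1))$ together with Lemma \ref{l.pn.3}, so the only issue is which character twists this isomorphism equivariantly. The paper resolves this globally: it proves the general statement that if $T$ acts on $V$ with weights $\mu_1,\dots,\mu_{k+1}$ then $\om_{\bp(V)}\cong e^{-(\mu_1+\cdots+\mu_{k+1})}\,\co_{\bp(V)}(-(k+1))$ as $T$-equivariant sheaves, determining the character by noting that Serre duality forces $T$ to act trivially on $H^k(\bp(V),\om_{\bp(V)})$, while an explicit \v{C}ech cocycle $\tau_1\cdots\tau_{k+1}$ shows that $T$ acts on $H^k(\bp(V),\co_{\bp(V)}(-(k+1)))$ with weight $\mu_1+\cdots+\mu_{k+1}$. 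You instead resolve it locally, comparing fiber weights of the two sides at the single fixed point $[e_{u+1}]$ — computing the fiber of $\om_Y$ via the tangent space $\Hom(\langle e_{u+1}\rangle, V/\langle e_{u+1}\rangle)$ — and invoking the uniqueness of an equivariant structure with prescribed stalk weight at a $T$-fixed point; this is exactly the device the paper uses in its proof of Proposition \ref{p.bases}(a), just not the one it chose for this lemma. Your weight bookkeeping is correct on both sides (each gives $(w-u)\eps_{u+1}-(\eps_{u+2}+\cdots+\eps_{w+1})$, and the degenerate case $w=u$ comes out right with empty products). What each approach buys: your fixed-point comparison is shorter and avoids any cohomology computation, while the paper's route records the reusable general formula for $\om_{\bp(V)}$ for an arbitrary $T$-module $V$ (the equivariant form of \cite[Ex.~III.8.4]{Har:77}), which is the statement it actually proves and cites.
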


 \begin{proof} Observe first that $X_w\cap X^u=\{[0, \dots, 0, x_{u+1}, \dots, x_{w+1}, 0, \dots, 0]\}.$
 In particular, $X_w\cap X^u$ is a projective space. To prove the lemma,
we will show more generally that if $T$ acts on a vector space $V$ with
 weights $\mu_1, \dots ,\mu_{k+1}$, then, as $T$-equivariant sheaves on $\Bbb P(V)$, we have
  \[
\omega_{\Bbb P(V)} \cong e^{-(\mu_1 + \cdots + \mu_{k+1})} \,\co_{\Bbb P(V)}(-(k+1)),
  \]
  where $\Bbb P(V)$ denotes the space of lines in $V$.  This is a consequence of
  \cite[Ex.~III.8.4]{Har:77}; a direct proof is as follows.
We know that non-equivariantly $\omega_{\Bbb P(V)}\cong\co_{\Bbb P(V)}(-(k+1))$.
 So, equivariantly we must have $\omega_{\Bbb P(V)}\cong e^{\mu}\otimes\co_{\Bbb P(V)}(-(k+1))$,
 for some character $e^\mu$ of $T$.
 To determine $\mu$, observe that $T$ acts on $H^k(\Bbb P(V), \omega_{\Bbb P(V)})$
 by the trivial character $e^0=1$.  This holds since, by Serre duality, $H^k(\Bbb P(V),
 \omega_{\Bbb P(V)}) \cong H^0(\Bbb P(V),\, \co_{\Bbb P(V)})^*$. Moreover,
 this isomorphism is $T$-equivariant because the Serre duality is natural;
 since $T$ acts trivially on $H^0(\Bbb P(V),\, \co_{\Bbb P(V)})$, the claim follows.
 On the other hand, we claim that $T$ acts on $H^k ( \Bbb P(V)$, $ \co_{\Bbb P(V)}(-(k+1)) )$
 with weight $\mu_1 +\cdots +\mu_{k+1}$.
To prove this, observe that on the open set $U_i$ where the $i$-th coordinate
$x_i$ is nonzero, we have a section
  \[
\tau_i: [x_1, \dots ,x_{k+1}] \mapsto \Bigl(\frac{x_1}{x_i}, \frac{x_2}{x_i},
 \dots ,\frac{x_{k+1}}{x_i}\Bigr)
  \]
of $\co_{\Bbb P(V)}(-1)$, on which $T$ acts with weight $\mu_i$.  A generator of
$H^k(\Bbb P(V)$, $
\co_{\Bbb P(V)}(-(k+1)))$ is represented (as a \v{C}ech cocycle) by the
section $\tau_1\tau_2\cdots\tau_{k+1}\in$ $\Gam  ( U_1\cap\cdots\cap U_{k+1},\, \co (-(k+1) )$.
Since $T$ acts on this section with weight $\mu_1 +\cdots +\mu_{k+1}$, the claim follows.
 We conclude that, $T$-equivariantly,
  \[
\co_{\Bbb P(V)} (-(k+1))\cong \omega_{\Bbb P(V)}\otimes e^{\mu_1 + \cdots+\mu_{k+1}} ,
  \]
proving the lemma.
  \end{proof}

Let $Y_i\subset X$ be defined by the equation $x_i=0$.

  \begin{lemma} \label{l.pn.5} $[\co_{Y_i}] = 1-e^{-\eps_i} [\cl (-\eps_1)]$ in $K_T(X)$.
  \end{lemma}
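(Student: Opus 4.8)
The plan is to present $Y_i$ as a $T$-stable effective Cartier divisor, namely the zero scheme of a $T$-eigensection, and then read off $[\co_{Y_i}]$ from its ideal-sheaf sequence; the only genuine computation is to identify the equivariant character twist. Since $Y_i=\{x_i=0\}$ is a hyperplane, it is the zero scheme of the section $x_i$ of $\co_X(1)$, which equals $\cl(\eps_1)$ by Lemma \ref{l.pn.3}. Because $X=\bp^n$ is smooth and $Y_i$ is a smooth hyperplane, the ideal sheaf $\ci_{Y_i}=\co_X(-Y_i)$ is a $T$-equivariant line bundle, and non-equivariantly it is $\co_X(-1)=\cl(-\eps_1)$; hence $\ci_{Y_i}\cong\cl(-\eps_1)\otimes\bc_\nu$ for a unique character $\nu$. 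Using the convention $[\co_X(-Y_i)]=[\co_X]-[\co_{Y_i}]$ recorded in the notation section, I would then get
\[
[\co_{Y_i}] = 1 - [\ci_{Y_i}] = 1 - e^{\nu}\,[\cl(-\eps_1)],
\]
so that the whole lemma reduces to the claim $\nu=-\eps_i$.

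To pin down $\nu$ I would localize at the $T$-fixed point $[e_i]=[0,\dots,0,1,0,\dots,0]$ (with the $1$ in the $i$-th slot), which lies off $Y_i$ since $x_i([e_i])=1\neq 0$. Away from its divisor the ideal sheaf $\ci_{Y_i}$ agrees with $\co_X$, so its fiber at $[e_i]$ carries the trivial character. On the other hand the tautological bundle $\co_X(-1)$ has fiber $\bc e_i$ at $[e_i]$, on which $T$ acts by $t\cdot e_i=t_i e_i=e^{\eps_i}(t)\,e_i$, i.e.\ with weight $\eps_i$; hence $\bigl(\cl(-\eps_1)\otimes\bc_\nu\bigr)\big|_{[e_i]}$ has weight $\eps_i+\nu$. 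Comparing the two values forces $\eps_i+\nu=0$, that is $\nu=-\eps_i$. (As a cross-check, $x_i=e_i^{*}\in V^{*}=H^0(X,\co_X(1))$ is a $T$-eigenvector of weight $-\eps_i$, and multiplication by it gives exactly the $T$-equivariant inclusion $\cl(-\eps_1)\otimes\bc_{-\eps_i}\hookrightarrow\co_X$.)

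Substituting $\nu=-\eps_i$ into the displayed identity yields $[\co_{Y_i}]=1-e^{-\eps_i}[\cl(-\eps_1)]$, as asserted. The one delicate point is the bookkeeping of the equivariant twist $\nu$: one must be consistent about whether the $T$-weight of the fiber of $\co_X(-1)$ at $[e_i]$ is $\eps_i$ or $-\eps_i$. The fixed-point comparison above is the safeguard I would rely on, since the triviality of $\ci_{Y_i}$ off the divisor makes the sign of the twist unambiguous.
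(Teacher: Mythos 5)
Your proof is correct, and it reaches the paper's conclusion by a slightly different route. Both arguments reduce the lemma, via $[\co_{Y_i}]=1-[\ci_{Y_i}]$, to the $T$-equivariant identification $\ci_{Y_i}\cong e^{-\eps_i}\,\co_X(-1)$; the difference lies in how the character twist is pinned down. The paper constructs the isomorphism explicitly: on each chart $U_j=\{x_j\neq 0\}$ the ideal sheaf is generated by $\sig_j=x_i/x_j$, of weight $\eps_j-\eps_i$, while $\co_X(-1)$ is generated by the section $\tau_j$, of weight $\eps_j$; since the transition functions agree, $\tau_j\otimes 1\mapsto\sig_j$ defines a $T$-equivariant isomorphism $e^{-\eps_i}\co_X(-1)\simto\ci_{Y_i}$. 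You instead invoke rigidity --- an equivariant line bundle on $\bp^n$ is determined by its underlying bundle up to a unique character twist --- and fix the twist by comparing fiber weights at the single fixed point $[e_i]$ lying off $Y_i$, where the inclusion $\ci_{Y_i}\hookrightarrow\co_X$ is an isomorphism (so the fiber weight is trivial) while $\co_X(-1)$ has fiber $\bc e_i$ of weight $\eps_i$; both weight computations are right, so $\nu=-\eps_i$ follows. Your fixed-point argument is shorter and is exactly the technique the paper uses elsewhere (cf.\ the proofs of Proposition \ref{p.bases}(a) and Lemma \ref{l.pn.4}, including the parenthetical rigidity statement for rank-one sheaves in the former), so it is entirely in the spirit of the paper; what the chart computation buys is an explicit isomorphism with no rigidity input, and in substance it establishes the general principle recorded as Remark \ref{r.pn} --- indeed your cross-check, that $x_i\in V^*$ is a weight $-\eps_i$ section of $\cl(\eps_1)$ cutting out $Y_i$, is precisely Remark \ref{r.pn} specialized to this case.
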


  \begin{proof}  By Lemma \ref{l.pn.3}, $\cl(-\eps_1)=\co_X(-1)$.  Let $\ci_{Y_i}$
  denote the ideal sheaf of $Y_i$.  Since $[\co_{Y_i}]=1 -[\ci_{Y_i}]$,
  it suffices to show that $\ci_{Y_i}\cong e^{-\eps_i}\, \co_X(-1)$ as $T$-equivariant
  coherent sheaves on $X$.  On the open set $U_j: x_j\neq 0$, we have affine
  coordinates $\frac{x_k}{x_j}$ $(k\neq j)$.  Now, $\ci_{Y_i}(U_j)$ is generated by
  the section $\sig_j =\frac{x_i}{x_j}$, which transforms under $T$ by the weight
  $\eps_j-\eps_i$.  Similarly, $\co_X(-1)(U_j)$ is generated by the section
  \[
\tau_j: [x_1, \dots ,x_{n+1}] \mapsto \Bigl(\frac{x_1}{x_j}, \frac{x_2}{x_j},
 \dots , \frac{x_{n+1}}{x_j}\Bigr),
  \]
which transforms under $T$ by the weight $\eps_j$.  Since
  \[
\frac{\sig_k}{\sig_j} = \frac{\tau_k}{\tau_j} = \frac{x_j}{x_k},
  \]
it follows that the map $\co_X(-1)\otimes e^{-\eps_i}$ $\to \ci_{Y_i}$ defined
on $U_j$ by $\tau_j\otimes 1 \mapsto\sig_j$ is a $T$-equivariant sheaf isomorphism.
  \end{proof}

\begin{remark} \label{r.pn}
More generally, the following is true.  Let
$Y$ be any $T$-scheme and let $\cl$
be a $T$-equivariant line bundle on $Y$.  Given a section
 $\sigma$ of weight $\lambda$ and zero scheme $Z(\sigma)$, we have
 $$
 [\co_{Z(\sigma)}] = 1 - e^{\lambda}[\cl^*].
 $$
 See Proposition \ref{p.alpha} for a related result.
 \end{remark}

  \begin{corollary} \label{c.pn.1}
  \begin{align*}
\xi^v &=  e^{-\eps_{v+1}} [ \cl(-\eps_1)] \prod^v_{i=1}  ( 1-e^{-\eps_i}\, [\cl(-\eps_1)] )\,
 \qquad (0\leq v<n) \\
\xi^n &= \prod^n_{i=1}  ( 1-e^{-\eps_i}[\cl (-\eps_1)] ) .
  \end{align*}
  \end{corollary}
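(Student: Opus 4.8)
The plan is to reduce the statement to the explicit coordinate description of the opposite Schubert varieties together with Lemma \ref{l.pn.5}. First I would unwind the definition $\xi^v = [\co_{X^v}(-\partial X^v)]$, which by the ideal-sheaf identity $[\co_Y(-Z)] = [\co_Y]-[\co_Z]$ equals $[\co_{X^v}] - [\co_{\partial X^v}]$. Since $X^v = \{[0,\dots,0,x_{v+1},\dots,x_{n+1}]\}$ and $\partial X^v = \bigsqcup_{u>v} B^-uP/P$, comparing coordinate descriptions shows that for $0\le v<n$ one has $\partial X^v = X^{v+1}$, while $\partial X^n = \emptyset$. Hence $\xi^v = [\co_{X^v}] - [\co_{X^{v+1}}]$ for $0\le v<n$, and $\xi^n = [\co_{X^n}]$.

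The second step is to establish a product formula for $[\co_{X^v}]$. The variety $X^v$ is cut out by $x_1 = \cdots = x_v = 0$, that is, $X^v = Y_1\cap\cdots\cap Y_v$ with $Y_i = \{x_i=0\}$ as in Lemma \ref{l.pn.5}. As these are coordinate hyperplanes meeting transversally, each successive intersection is proper, and each is a linear projective subspace, hence Cohen-Macaulay; so repeated application of \cite[Lemma 1]{Bri:02} gives $[\co_{X^v}] = \prod_{i=1}^v [\co_{Y_i}]$ in $K_T(X)$. Substituting $[\co_{Y_i}] = 1 - e^{-\eps_i}[\cl(-\eps_1)]$ from Lemma \ref{l.pn.5} yields
\[
[\co_{X^v}] = \prod_{i=1}^v \bigl(1 - e^{-\eps_i}[\cl(-\eps_1)]\bigr).
\]

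The corollary then follows immediately. For $v=n$ this reads $\xi^n = [\co_{X^n}] = \prod_{i=1}^n (1 - e^{-\eps_i}[\cl(-\eps_1)])$, the second formula. For $0\le v<n$ the difference telescopes:
\[
\xi^v = [\co_{X^v}] - [\co_{X^{v+1}}]
= \prod_{i=1}^v\bigl(1-e^{-\eps_i}[\cl(-\eps_1)]\bigr)\,
\bigl(1 - (1 - e^{-\eps_{v+1}}[\cl(-\eps_1)])\bigr),
\]
and the last factor collapses to $e^{-\eps_{v+1}}[\cl(-\eps_1)]$, which is exactly the first formula. I do not expect any serious obstacle here: the only point deserving a line of justification is that the partial intersections $Y_1\cap\cdots\cap Y_i$ are proper and Cohen-Macaulay so that \cite[Lemma 1]{Bri:02} applies at each stage, and this is automatic for coordinate linear subspaces of $\bp^n$.
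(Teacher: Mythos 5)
Your proposal is correct and follows essentially the same route as the paper: it writes $\xi^v = [\co_{X^v}]-[\co_{X^{v+1}}]$, computes $[\co_{X^v}] = \prod_{i=1}^v(1-e^{-\eps_i}[\cl(-\eps_1)])$ from the transverse intersection $X^v = Y_1\cap\cdots\cap Y_v$ via Lemma \ref{l.pn.5} (the paper's appeal to transversality is exactly your invocation of \cite[Lemma 1]{Bri:02}), and subtracts. Your explicit factorization of the difference is just a slightly more detailed rendering of the paper's final subtraction step.
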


  \begin{proof} $\xi^v = [\co_{X^v}(-\partial X^v)] = [\co_{X^v}]
-[\co_{X^{v+1}}]$.  For $1\leq v\leq n$, since $X^v$ is the transverse
intersection of $Y_1, \dots , Y_v$, we have by Lemma \ref{l.pn.5},
  \[
[\co_{X^v}] = \prod^v_{i=1} [\co_{Y_i}] = \prod^v_{i=1} (1-e^{-\eps_i}\, [\cl (-\eps_1)]) .
  \]
(For $v=0$, this formula is interpreted as saying that $[\co_{X^0}]=1$,
which is true since $X^0=X$.)  A similar equation holds for $[\co_{X^{v+1}}]$
(with $[\co_{X^{n+1}}]=0$, as $X^{n+1}$ is empty).  Subtracting the two formulae gives the result.
  \end{proof}

\subsection{Structure constants with respect to the dual structure sheaf basis for $K_T(\Bbb P^n)$}
Write $[\sum_ia_it^i]_p = a_p$ and $[\sum_{i,j} b_{i,j}\, s^it^j]_{p,q}
= b_{p,q}$.  We have the following explicit formula for the structure constants
with respect to the dual structure sheaf basis.

  \begin{theorem} \label{t.pn.1} For any $0\leq u,v,w\leq n$,
  \begin{multline*}
(-1)^{u+v+w} p^w_{u,v}\\
 = e^{\chi_{w+1}-\chi_{u+1}-\chi_{v+1}} \biggl[
\frac{\bigl(\prod^u_{i=1} (1-te^{\eps_i})\bigr)\, \bigl(\prod^v_{i=1} (1-te^{\eps_i})\bigr)}
{\prod^{w+1}_{i=1} (1-te^{\eps_i})}\bigg]_{u+v-w+1} .\qquad\quad
  \end{multline*}
  \end{theorem}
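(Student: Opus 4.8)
The plan is to compute the structure constants $p^w_{u,v}$ directly by exploiting the explicit formula for the dual basis elements $\xi^v$ given in Corollary \ref{c.pn.1}, together with the description of $p^w_{u,v}$ via the diagonal push-forward in Proposition \ref{p.pn.1} and the pairing computation in Lemma \ref{l.pn.2}. The key observation is that on $\bp^n$ everything can be reduced to explicit cohomology computations on the projective subspaces $X_w \cap X^u$, whose canonical sheaves are pinned down $T$-equivariantly in Lemma \ref{l.pn.4}. First I would write, using Proposition \ref{p.pn.1} paired against $\xi^u \boxtimes \xi^v$ and the duality of Proposition \ref{p.2.1},
\[
p^w_{u,v} = \chi\bigl(X, [\co_{X_w}]\otimes \xi^u \otimes \xi^v\bigr),
\]
and then substitute one of the two factors, say $\xi^u = [\co_{X^u}(-\partial X^u)]$, so that Lemma \ref{l.pn.2} applies with $\gam = \xi^v$. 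This converts the problem into the difference
\[
\chi(X_w \cap X^u,\, \xi^v\big|_{X_w\cap X^u}) - \chi(X_w\cap\partial X^u,\, \xi^v\big|_{X_w\cap X^u}),
\]
an Euler characteristic computation on projective spaces.

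Next I would restrict the explicit formula for $\xi^v$ from Corollary \ref{c.pn.1} to the projective space $X_w\cap X^u = \{[0,\dots,0,x_{u+1},\dots,x_{w+1},0,\dots,0]\}$, using Lemma \ref{l.pn.3} to identify $\cl(-\eps_1)$ with $\co_X(-1)$. The restriction of each factor $1 - e^{-\eps_i}[\cl(-\eps_1)]$ becomes, after restriction, either the class of a hyperplane section inside $X_w\cap X^u$ or a unit, depending on whether $i$ lies in the range $\{u+1,\dots,w+1\}$; this is where the product $\prod_{i=1}^{v}(1-te^{\eps_i})$ in the numerator and the product $\prod_{i=1}^{w+1}(1-te^{\eps_i})$ in the denominator will originate. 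The strategy is to package the computation through a generating function in an auxiliary variable $t$ tracking the twist by $\co(1)$: on $\bp^k = \bp(V)$ with $V$ having weights $\mu_1,\dots,\mu_{k+1}$, one has the standard equivariant Euler-characteristic formula, for which the cleanest input is that $\chi(\bp(V),\co(m))$ is computed by the coefficient extraction encoded in $\prod_i (1-te^{\mu_i})^{-1}$. The $T$-equivariant Serre-duality/canonical-sheaf bookkeeping of Lemma \ref{l.pn.4} supplies the overall character shift $e^{\chi_{w+1}-\chi_{u+1}-\chi_{v+1}}$ and the sign $(-1)^{u+v+w}$.

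After assembling the restricted class $\xi^v\big|_{X_w\cap X^u}$ as a polynomial in $[\co_X(1)]$ with coefficients in $R(T)$, I would compute its Euler characteristic against $\co_{X_w\cap X^u}$ and against $\co_{X_w\cap\partial X^u}$, take the difference dictated by Lemma \ref{l.pn.2}, and read off the result as a single coefficient $[\,\cdot\,]_{u+v-w+1}$ of the rational generating function displayed in the theorem. The degree-shift index $u+v-w+1$ should emerge naturally from comparing the number of factors contributed by $\xi^u$ and $\xi^v$ (namely $u$ and $v$ hyperplane-type factors) against the dimension $w - u$ (or $w+1$ after accounting for the boundary subtraction) of the ambient projective subspace $X_w\cap X^u$.

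The main obstacle, I expect, will be the careful $T$-equivariant bookkeeping of characters: one must track precisely which weights $\eps_i$ survive the restriction to $X_w\cap X^u$, correctly handle the boundary term $\partial X^u$ (which shifts $u$ to $u+1$ and contributes the alternating-sign difference), and reconcile the canonical-bundle twist from Lemma \ref{l.pn.4} so that the leading character factor comes out exactly as $e^{\chi_{w+1}-\chi_{u+1}-\chi_{v+1}}$. In particular, verifying that the denominator is $\prod_{i=1}^{w+1}(1-te^{\eps_i})$ rather than a product over the exact index range $\{u+1,\dots,w+1\}$ will require using that the extra factors $\prod_{i=1}^{u}(1-te^{\eps_i})$ appearing in both numerator and denominator do not affect the extracted coefficient after the character normalization — a cancellation that I would check explicitly once the generating function is in hand.
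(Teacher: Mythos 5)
Your proposal is essentially the paper's own proof: the same reduction $p^w_{u,v}=\chi(X_w\cap X^u,\xi^v)-\chi(X_w\cap X^{u+1},\xi^v)$ via Proposition \ref{p.pn.1} (paired using Proposition \ref{p.2.1}) and Lemma \ref{l.pn.2}, the same expansion of $\xi^v$ from Corollary \ref{c.pn.1} into powers of $[\cl(-\eps_1)]$ with elementary-symmetric-function coefficients, and the same $T$-equivariant Serre-duality computation using Lemma \ref{l.pn.4} packaged into a generating function whose coefficient in degree $u+v-w+1$ is extracted after setting $s=t^{-1}$, with the cancellation you flag at the end being exactly how the paper passes from the denominator $\prod_{j=u+1}^{w+1}(1-te^{\eps_j})$ to the symmetric form $\prod_{i=1}^{w+1}(1-te^{\eps_i})$. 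The only detail you do not anticipate is that the paper treats the remaining case $u=v=n$ separately, by the self-intersection formula and induction on $n$, because the formula for $\xi^n$ in Corollary \ref{c.pn.1} lacks the prefactor $e^{-\eps_{v+1}}[\cl(-\eps_1)]$ on which the main computation relies.
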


  \begin{proof}  We have
  \begin{align}\label{(21)}
p^w_{u,v} &= \chi  ( X\times X,\, D_*[\co_{X_w}]\otimes
([ \co_{X^u}(-\partial X^u) ] \boxtimes\xi^v) ) \notag\\
&= \chi (X_w\cap X^u,\, \xi^v) - \chi (X_w\cap X^{u+1},\,\xi^v),
  \end{align}
where the first equality is by Propositions \ref{p.pn.1} and \ref{p.2.1} and the second is by Lemma
\ref{l.pn.2}.
Suppose first that $v<n$.  (We separate the case $v=n$ because the
formula for $\xi^v$ is different in this case.)  We have, by Corollary \ref{c.pn.1},
 \begin{align*}
\chi (X_w\cap X^u, \xi^v) &= e^{-\eps_{v+1}}\chi (X_w\cap X^u, [ \cl
(-\eps_1) ] \prod^v_{i=1} (1-e^{-\eps_i} [ \cl (-\eps_1) ] ) \\
&= e^{-\eps_{v+1}}\, \chi \Bigl( X_w\cap X^u, \sum^v_{p=0} \,(-1)^p E_p
(e^{-\eps_1}, \dots ,e^{-\eps_v})\, [ \cl (-(p+1)\eps_1) ] \Bigr)\\
&= \sum^v_{p=0} \,(-1)^pe^{-\eps_{v+1}}\, E_p(e^{-\eps_1}, \dots ,e^{-\eps_v})\, \chi
(X_w\cap X^u,\,[ \cl (-(p+1)\eps_1) ] ) .
  \end{align*}
We apply Serre duality to this formula, using the formula of Lemma \ref{l.pn.4} for
the dualizing sheaf.  Thus, the above sum can be expressed as  (note that
$X_w\cap
X^u $ is nonempty iff $w\geq u$ and $\dim X_w\cap
X^u $ $= w-u$):
  \[
(-1)^{w-u} \sum^v_{p=0}\,(-1)^p e^{-\eps_{v+1}}\,
E_p(e^{-\eps_1}, \dots ,e^{-\eps_v})\, \bar{\chi}(X_w\cap X^u, [ \cl
((p-w+u)\eps_1) ] \otimes e^{\chi_u-\chi_{w+1}}).
  \]
In this expression, the only contribution comes from the cohomology in
degree 0 (this follows from \cite[Theorem III.5.1]{Har:77}, since $X_w\cap X^u$ is a projective space). So,  the above expression reduces to
  \begin{align*}
(-1)^{w+u}e^{\chi_{w+1}-\chi_u-\eps_{v+1}} \sum^v_{p=0} &(-1)^p\, E_p
(e^{-\eps_1}, \dots ,e^{-\eps_v})\\
&\times \bar{h}^0 (X_w\cap X^u, \cl ((p-w+u)\,\eps_1))\\
=(-1)^{w+u}\, e^{\chi_{w+1}-\chi_u-\eps_{v+1}}\sum^v_{p=0} &(-1)^p\,
E_p(e^{-\eps_1}, \dots ,e^{-\eps_v})\\
&\times \text{ch}\Bigl(
S^{p-w+u} ( V(\eps_{u+1}, \dots ,\eps_{w+1}) )\Bigr).
  \end{align*}
Form the generating function
  \begin{align*}
f(s,t) &= \sum_{p,q} (-1)^p\, E_p(e^{-\eps_1}, \dots ,e^{-\eps_v})\,
\text{ch}\Bigl( S^q ( V(\eps_{u+1}, \dots ,\eps_{w+1}) )\Bigr)\, s^p t^q \\
&=  (\prod^v_{i=1} (1-se^{-\eps_i}) ) \, ( \prod^{w+1}_{j=u+1}
(1-te^{\eps_j})^{-1} ),
  \end{align*}
where the expansion of $(1-te^{\eps_j})^{-1}$ is at $t=0$ (i.e., as a
power series in $t$).  Then,
  \[
\chi (X_w\cap X^u, \xi^v) = (-1)^{w+u}\,
e^{\chi_{w+1}-\chi_u-\eps_{v+1}} \sum_p\, [f(s,t)]_{p,p-w+u}.
  \]
Setting $s=t^{-1}$, we obtain
  \begin{align*}
\chi &( X_w\cap X^u, \xi^v)
= (-1)^{w+u}\, e^{\chi_{w+1}-\chi_u-\eps_{v+1}} [f(t^{-1},t)]_{u-w}\\
&= (-1)^{w+u}\, e^{\chi_{w+1}-\chi_u-\eps_{v+1}} \Biggl[t^v (\prod^v_{i=1}
(1-t^{-1}e^{-\eps_i}) )\,  (\prod^{w+1}_{j=u+1}
(1-te^{\eps_j})^{-1} )\Biggr]_{u+v-w}\\
&= (-1)^{u+v+w}\, e^{\chi_{w+1}-\chi_u-\eps_{v+1}} \Biggl[ (\prod^v_{i=1}
(e^{-\eps_i}-t) )\, (\prod^{w+1}_{j=u+1}
(1-te^{\eps_j})^{-1} )\Biggr]_{u+v-w}\\
&= (-1)^{u+v+w}\, e^{\chi_{w+1}-\chi_u-\chi_{v+1}} \Biggl[ (\prod^v_{i=1}
(1-te^{\eps_i}) )\, (\prod^{w+1}_{j=u+1}
(1-te^{\eps_j})^{-1} )\Biggr]_{u+v-w},
  \end{align*}
where multiplying by
$t^v$ enabled us to shift degree from $u-w$ to $u+v-w$,
and in the last step we have used the equation
$\chi_{v+1}=\eps_1+...+\eps_{v+1}$.     From this last expression, we obtain
  \begin{multline*}
\chi (X_w\cap X^u, \xi^v)\\
= (-1)^{u+v+w}\, e^{\chi_{w+1}-\chi_u-\chi_{v+1}} \Biggl[
\frac{\bigl(\prod^u_{i=1}(1-te^{\eps_i})\bigr)\bigl(\prod^v_{i=1}(1-te^{\eps_i})\bigr)}
{\prod^{w+1}_{i=1}(1-te^{\eps_i})}\Biggr]_{u+v-w} .
  \end{multline*}
Set
\[A=\frac{\bigl(\prod^u_{i=1}(1-te^{\eps_i})\bigr)\bigl(\prod^v_{i=1}(1-te^{\eps_i})\bigr)}
{\prod^{w+1}_{i=1}(1-te^{\eps_i})}.\]
 Replacing $u$ by $u+1$ in
the formula for $\chi (X_w\cap X^u, \xi^v)$ yields an expression for $\chi (X_w\cap X^{u+1},\xi^v)$
(the expression turns out to be $0$ if $u=n$, so we can allow the case $u=n$ in the argument).
The difference is
  \begin{align}\label{(22)}
\chi &(X_w\cap X^u, \xi^v) - \chi (X_w\cap X^{u+1}, \xi^v)\notag\\
&= (-1)^{u+v+w}\, e^{\chi_{w+1}-\chi_u-\chi_{v+1}} [A]_{u+v-w}\notag\\
&\qquad\quad - (-1)^{u+v+w+1}\, e^{\chi_{w+1}-\chi_{u+1}-\chi_{v+1}}
[A(1-te^{\eps_{u+1}})]_{u+v-w+1} \notag\\
&= (-1)^{u+v+w}\, e^{\chi_{w+1}-\chi_u-\chi_{v+1}} \Bigl( [tA]_{u+v-w+1}
+ e^{-\eps_{u+1}}[A(1-te^{\eps_{u+1}})]_{u+v-w+1}\Bigr) \notag\\
 &= (-1)^{u+v+w}\, e^{\chi_{w+1}-\chi_u-\chi_{v+1}}  [
A(t+e^{-\eps_{u+1}}(1-te^{\eps_{u+1}}) ]_{u+v-w+1}\notag\\
&= (-1)^{u+v+w}\, e^{\chi_{w+1}-\chi_u-\eps_{u+1}-\chi_{v+1}}
[A]_{u+v-w+1}.
  \end{align}
Since $\chi_u+\eps_{u+1} = \chi_{u+1}$, we obtain the desired
expression from \eqref{(21)}.  This proves the result if $v<n$.  Interchanging the roles
of $u$ and $v$ proves the result if $u<n$.  The only remaining case is
$u=v=n$.  Rather than imitate the argument above for this case, we will
simply calculate both sides of the equation in the statement of the
theorem.  Since $\xi^n =[\co_{X^n}]$ and $X^n = \{ [0,\ldots,0,1]\}$, the
self-intersection formula (cf. \cite[Theorem 2.1]{vistoli}) implies that
  \[
\xi^n\xi^n =  (\prod^n_{i=1} (1-e^{\eps_{n+1}-\eps_i}) )\, \xi^n .
  \]
So
\beqn\label{(23)} p^w_{n,n}=0\,\,\text{ if} \,w<n\,\text{ and}
\eeqn
  \beqn\label{(24)}
(-1)^n\, p^n_{n,n} = (-1)^n\prod^n_{i=1} (1-e^{\eps_{n+1}-\eps_i}) =
\prod^n_{i=1}(e^{\eps_{n+1}-\eps_i}-1).
  \eeqn
Observe that the right  side of the equation in the statement of the
theorem is 0 unless $w=n$ (see the proof of Corollary \ref{c.pn.2} below).  So, it
suffices to calculate this side if $w=n$.  The case $n=1$ gives the correct
result (we omit the calculation).  Assume the result holds for $n-1$.
We have
  \begin{align*}
e^{-\chi_{n+1}} &\Biggl[
\frac{\prod^n_{i=1}(1-te^{\eps_i})}{1-te^{\eps_{n+1}}}
\Biggr]_{n+1}\\
&= e^{-\chi_{n+1}}\Biggl[\frac{\prod^n_{i=1}(1-te^{\eps_i})}
{1-te^{\eps_{n+1}}} -
\frac{(1-te^{\eps_{n+1}})\cdot \prod^n_{i=2}(1-te^{\eps_i})}{1-te^{\eps_{n+1}}}\\
&\qquad\qquad\quad +
\frac{(1-te^{\eps_{n+1}})\cdot \prod^n_{i=2}(1-te^{\eps_i})}
{1-te^{\eps_{n+1}}}\Biggr]_{n+1}\\
&= e^{-\chi_{n+1}}\Biggl[\frac{ t(e^{\eps_{n+1}}-e^{\eps_1})\cdot \prod^n_{i=2}(1-te^{\eps_i})
}{1-te^{\eps_{n+1}}}
 + \prod^n_{i=2}(1-te^{\eps_i})\Biggr]_{n+1}.
  \end{align*}
The second product is of degree $n-1$ in $t$ and therefore contributes
nothing to the degree $n+1$ part.  So, the above expression equals
  \begin{align*}
e^{-\chi_{n+1}} & ( e^{\eps_{n+1}} -e^{\eps_1} )
\Biggl[\frac{\prod^n_{i=2}(1-te^{\eps_i})}{1-te^{\eps_{n+1}}}\Biggr]_n\\
&=  ( e^{\eps_{n+1}-\eps_1}-1 ) e^{-\chi_{n+1} + \eps_1}
\Biggl[\frac{\prod^n_{i=2}(1-te^{\eps_i})}{1-te^{\eps_{n+1}}}\Biggr]_n\\
&=  ( e^{\eps_{n+1}-\eps_1}-1 )
\prod^n_{i=2} ( e^{\eps_{n+1}-\eps_i}-1 ),
  \end{align*}
where in the last step we have used our inductive hypothesis.  The
result follows from identity \eqref{(24)}.
  \end{proof}

  \begin{corollary} \label{c.pn.2} If $p^w_{u,v}\neq 0$, then $u,v\leq w\leq u+v+1$.
  \end{corollary}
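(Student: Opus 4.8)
The plan is to read off all three inequalities directly from the explicit formula of Theorem \ref{t.pn.1}, by analyzing the order and the degree in $t$ of the rational function
\[
A = \frac{\bigl(\prod^u_{i=1}(1-te^{\eps_i})\bigr)\bigl(\prod^v_{i=1}(1-te^{\eps_i})\bigr)}{\prod^{w+1}_{i=1}(1-te^{\eps_i})},
\]
expanded as a power series in $t$ about $t=0$. Indeed, by Theorem \ref{t.pn.1} the element $p^w_{u,v}$ equals $[A]_{u+v-w+1}$ times the sign $(-1)^{u+v+w}$ and the unit $e^{\chi_{w+1}-\chi_{u+1}-\chi_{v+1}}$ of $R(T)$, so $p^w_{u,v}$ vanishes precisely when $[A]_{u+v-w+1}$ does.

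First I would dispose of the upper bound $w\leq u+v+1$. The numerator of $A$ is a polynomial in $t$, and each denominator factor expands as $(1-te^{\eps_i})^{-1}=\sum_{k\geq 0}t^ke^{k\eps_i}$, so $A$ is an honest power series in $t$ with no terms of negative degree. Hence if $u+v-w+1<0$, that is $w>u+v+1$, the coefficient $[A]_{u+v-w+1}$ vanishes and $p^w_{u,v}=0$.

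For the lower bounds I would exploit the symmetry of $A$ under $u\leftrightarrow v$, so that it suffices to prove $w\geq u$. Suppose instead $w<u$, i.e.\ $w+1\leq u$. Then the whole denominator $\prod^{w+1}_{i=1}(1-te^{\eps_i})$ cancels against the first $w+1$ factors of $\prod^u_{i=1}(1-te^{\eps_i})$, leaving the polynomial
\[
A=\Bigl(\prod^u_{i=w+2}(1-te^{\eps_i})\Bigr)\Bigl(\prod^v_{i=1}(1-te^{\eps_i})\Bigr)
\]
of degree $(u-w-1)+v=u+v-w-1$ in $t$. Since the index $u+v-w+1$ strictly exceeds this degree, $[A]_{u+v-w+1}=0$, whence $p^w_{u,v}=0$. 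Interchanging $u$ and $v$ yields $w\geq v$ as well, completing all three inequalities.

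The argument presents no genuine obstacle: it is a clean order-and-degree count on a rational function. The only points requiring a moment's care are checking that $A$ really has a power-series (rather than Laurent) expansion, so that negatively indexed coefficients vanish, and correctly tracking the degree drop after the cancellation in the case $w<u$. This reasoning also covers the degenerate case $u=v=n$ uniformly, since Theorem \ref{t.pn.1} is valid for all $0\leq u,v,w\leq n$; in particular it supplies exactly the vanishing invoked earlier in the proof of that theorem.
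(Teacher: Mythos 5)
Your proof is correct and follows essentially the same route as the paper's: the paper likewise observes that $A$ has no negative powers of $t$ (giving $w\leq u+v+1$) and that for $w\leq u-1$ the cancellation leaves a polynomial of degree $u+v-w-1$, forcing $[A]_{u+v-w+1}=0$, with the bound $v\leq w$ obtained by symmetry. Your explicit degree count after cancellation and the remark that this formal vanishing is exactly what is invoked in the $u=v=n$ case of Theorem \ref{t.pn.1} are accurate refinements of the same argument.
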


  \begin{proof} Keeping the notation of the previous proof, we observe
that the power series expansion of $A$ contains no negative powers of
$t$.  Hence, if $p^w_{u,v} \neq 0$ then, by the previous theorem,  $u+v-w+1\geq 0$, i.e.,
$w\leq
u+v+1$.  Next, if $w\leq u-1$, then $A$ is a polynomial of degree
$u+v-w-1$.  Hence, $[A]_{u+v-w+1}=0$, so $p^w_{u,v}=0$.  Thus, if
$p^w_{u,v}\neq 0$ then $u\leq w$; similarly, $v\leq w$.
  \end{proof}

  \begin{corollary}  \label{c.pn.3}
  \begin{align*}
 p^w_{u,0} &= 0\quad\text{ if }\quad u\neq w-1,w,\\
 p^w_{w-1,0} &= -e^{\eps_{w+1}-\eps_1}, \,\text{ for } \, w\geq 1\\
 p^w_{w,0} &= e^{\eps_{w+1}-\eps_1}.
  \end{align*}
  \end{corollary}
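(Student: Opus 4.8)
The plan is to specialize the explicit formula of Theorem \ref{t.pn.1} to the case $v=0$ and read off the three assertions directly. Setting $v=0$, the product $\prod^v_{i=1}(1-te^{\eps_i})$ becomes the empty product $1$, and $\chi_{v+1}=\chi_1=\eps_1$, so the theorem specializes to
\[
(-1)^{u+w}\,p^w_{u,0}=e^{\chi_{w+1}-\chi_{u+1}-\eps_1}\Biggl[\frac{\prod^u_{i=1}(1-te^{\eps_i})}{\prod^{w+1}_{i=1}(1-te^{\eps_i})}\Biggr]_{u-w+1}.
\]
First I would dispose of the vanishing assertion. By Corollary \ref{c.pn.2}, $p^w_{u,0}\neq 0$ forces $u\le w\le u+1$, that is $u\in\{w-1,w\}$; hence $p^w_{u,0}=0$ whenever $u\neq w-1,w$.

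For the case $u=w$, the bracket is $\bigl[1/(1-te^{\eps_{w+1}})\bigr]_1$; since $1/(1-te^{\eps_{w+1}})=\sum_{k\ge 0}t^k e^{k\eps_{w+1}}$, this coefficient equals $e^{\eps_{w+1}}$. The sign $(-1)^{u+w}=(-1)^{2w}$ is $+1$ and the prefactor collapses to $e^{\chi_{w+1}-\chi_{w+1}-\eps_1}=e^{-\eps_1}$, giving $p^w_{w,0}=e^{\eps_{w+1}-\eps_1}$. For the case $u=w-1$ (with $w\ge 1$), the bracket is the constant term $\bigl[1/\bigl((1-te^{\eps_w})(1-te^{\eps_{w+1}})\bigr)\bigr]_0=1$, the sign is $(-1)^{2w-1}=-1$, and using $\chi_{w+1}-\chi_w=\eps_{w+1}$ the prefactor is $e^{\chi_{w+1}-\chi_w-\eps_1}=e^{\eps_{w+1}-\eps_1}$; hence $-p^w_{w-1,0}=e^{\eps_{w+1}-\eps_1}$, i.e. $p^w_{w-1,0}=-e^{\eps_{w+1}-\eps_1}$.

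There is no serious obstacle here: the computation is a direct substitution into Theorem \ref{t.pn.1} together with one invocation of Corollary \ref{c.pn.2}. The only point requiring care is the bookkeeping of the weight exponents—correctly simplifying $\chi_{w+1}-\chi_{u+1}-\eps_1$ in each case via the identity $\chi_{w+1}-\chi_w=\eps_{w+1}$—and tracking the overall sign $(-1)^{u+w}$, which equals $+1$ when $u=w$ and $-1$ when $u=w-1$.
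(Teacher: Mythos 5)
Your proposal is correct and follows exactly the paper's own route: the paper's proof likewise derives the vanishing statement immediately from Corollary \ref{c.pn.2} and the two explicit values as easy consequences of Theorem \ref{t.pn.1}, details which you have simply written out in full (and your bookkeeping of the sign $(-1)^{u+w}$ and the exponents via $\chi_{w+1}-\chi_w=\eps_{w+1}$ checks out).
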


  \begin{proof}  The first statement is immediate from Corollary \ref{c.pn.2}.  The
other formulas are easy consequences of Theorem \ref{t.pn.1}.
  \end{proof}

  \begin{corollary}  \label{c.pn.4} If $n,m\geq w$ then the structure constants
$p^w_{u,v}$ are the same for $\Bbb P^n$ and $\Bbb P^m$.
  \end{corollary}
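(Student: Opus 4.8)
The plan is to read the result directly off the explicit formula of Theorem~\ref{t.pn.1}, whose right-hand side refers to the ambient dimension only through the characters $\eps_1,\dots,\eps_{w+1}$. First I would dispose of the vanishing case using Corollary~\ref{c.pn.2}: the support condition $u,v\le w\le u+v+1$ for $p^w_{u,v}\ne 0$ does not involve $n$, so whenever it fails both $p^w_{u,v}(\bp^n)$ and $p^w_{u,v}(\bp^m)$ are zero and there is nothing to prove. I may therefore assume $u,v\le w$, and since $n,m\ge w$ this forces $u+1,v+1,w+1\le n+1,m+1$; consequently the exponent $\chi_{w+1}-\chi_{u+1}-\chi_{v+1}$ and every factor $1-te^{\eps_i}$ in the bracket of Theorem~\ref{t.pn.1} involve only $\eps_1,\dots,\eps_{w+1}$, and the whole expression depends on $n$ (resp.\ $m$) through nothing but these characters together with $u,v,w$.

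The one point requiring care is the sense in which the two elements, lying a priori in the different representation rings $R(T_n)$ and $R(T_m)$, are ``the same.'' Here I would record a degree count: in the bracket each power of $t$ carries exactly one $\eps_i$ (the numerator contributes $e^{\sum_{i\in S}\eps_i}$ with $|S|$ equal to the $t$-power, while the geometric expansion of the denominator contributes complete symmetric functions of the $e^{\eps_i}$ of the same $\eps$-degree), so $[A]_{u+v-w+1}$ is homogeneous of $\eps$-degree $u+v-w+1$; on the other hand $\chi_{w+1}-\chi_{u+1}-\chi_{v+1}$ has $\eps$-degree $(w+1)-(u+1)-(v+1)=-(u+v-w+1)$. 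Hence the product is homogeneous of $\eps$-degree $0$, i.e.\ a $\bz$-linear combination of $e^{\mu}$ with $\mu$ in the lattice spanned by the roots $\eps_i-\eps_j$, $1\le i,j\le w+1$ (consistent with $p^w_{u,v}\in R(T')$).

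Finally I would observe that these roots $\eps_i-\eps_j$ with $i,j\le w+1$ form an $A_w$ subsystem that is present in both $SL_{n+1}$ and $SL_{m+1}$ and is canonically identified under the obvious matching of indices; the group rings they generate inside $R(T_n)$ and $R(T_m)$ are thereby canonically the same. Under this identification the formula of Theorem~\ref{t.pn.1} returns literally the same combination of root exponentials for $\bp^n$ and for $\bp^m$, since it nowhere refers to $n$ or $m$ apart from through $\eps_1,\dots,\eps_{w+1}$, and the conclusion follows. I expect the degree bookkeeping of the middle paragraph to be the only genuine obstacle: it is precisely what guarantees that the answer lives in the common root subring and does not secretly depend on the dimension-dependent relation $\sum_{i=1}^{n+1}\eps_i=0$, which changes with $n$.
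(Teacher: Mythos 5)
Your proof is correct and takes essentially the same route as the paper, whose entire proof of Corollary \ref{c.pn.4} is the one-line observation that it ``follows immediately from the expression for $p^w_{u,v}$ given by the theorem,'' i.e., that the formula of Theorem \ref{t.pn.1} involves only $\eps_1,\dots,\eps_{w+1}$. Your middle paragraph --- the $\eps$-degree-zero count placing $p^w_{u,v}$ in the common root subring $\bz[e^{\eps_i-\eps_j}]_{1\le i,j\le w+1}$, so that the answer is insensitive to the dimension-dependent relation $\sum_{i=1}^{n+1}\eps_i=0$ --- is a careful spelling-out of the identification the paper leaves implicit, not a different argument.
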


  \begin{proof}  This follows immediately from the expression for
$p^w_{u,v}$ given by the theorem.
  \end{proof}

If $w\leq n$ and $\mu_1, \dots , \mu_{n+1}$ are weights of $T$, let
$p^w_{u,v}(\mu_1, \dots , \mu_{n+1})$ denote the element of $R(T)$ obtained by
replacing $\eps_i$ by $\mu_i$ in the expression of the theorem for
$p^w_{u,v}$, for $i=1, \dots , n+1$.

Define $\tilde{p}^w_{u,v}=(-1)^{u+v+w} p^w_{u,v}$.  The next theorem
gives a recurrence for the $\tilde{p}^w_{u,v}$.  We set
$\tilde{p}^w_{u,v}=0$ if $u$, $v$ or $w$ is negative.

  \begin{theorem} \label{t.pn.2} If $v\geq 1$, then
  \[
\tilde{p}^w_{u,v} =  ( e^{\eps_{u+1}-\eps_1}-1 )\,
\tilde{p}^{w-1}_{u-1,v-1}(\eps_2, \dots ,\eps_{n+1})
+ e^{\eps_{u+2}-\eps_1}\, \tilde{p}^{w-1}_{u,v-1}(\eps_2, \dots ,
\eps_{n+1}).
  \]
  \end{theorem}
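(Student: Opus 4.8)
The plan is to derive the recurrence directly from the closed formula of Theorem~\ref{t.pn.1} by a generating-function manipulation, the only real work being to track the exponential prefactors under the index shift $\eps_i\mapsto\eps_{i+1}$. Write $A=A^w_{u,v}(t)$ for the rational function
\[
A=\frac{\bigl(\prod_{i=1}^{u}(1-te^{\eps_i})\bigr)\bigl(\prod_{i=1}^{v}(1-te^{\eps_i})\bigr)}{\prod_{i=1}^{w+1}(1-te^{\eps_i})},
\]
so that $\tilde{p}^w_{u,v}=e^{\chi_{w+1}-\chi_{u+1}-\chi_{v+1}}[A]_{u+v-w+1}$ by Theorem~\ref{t.pn.1}. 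Introduce the ``shifted'' function
\[
B=\frac{\bigl(\prod_{i=2}^{u}(1-te^{\eps_i})\bigr)\bigl(\prod_{i=2}^{v}(1-te^{\eps_i})\bigr)}{\prod_{i=2}^{w+1}(1-te^{\eps_i})},
\]
and record the two factorizations $A=(1-te^{\eps_1})\,B$ and $C:=(1-te^{\eps_{u+1}})\,B$; here $C$ will be exactly the coefficient-part of the second term of the recurrence.

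Next I would express the two right-hand terms in terms of $B$ and $C$. Applying the substitution $\eps_i\mapsto\eps_{i+1}$ to the formula of Theorem~\ref{t.pn.1}, and using that $\chi_k\mapsto\chi_{k+1}-\eps_1$ under this shift, a direct computation gives
\[
\tilde{p}^{w-1}_{u-1,v-1}(\eps_2,\dots,\eps_{n+1})=e^{\chi_{w+1}-\chi_{u+1}-\chi_{v+1}+\eps_1}\,[B]_{u+v-w},
\]
\[
\tilde{p}^{w-1}_{u,v-1}(\eps_2,\dots,\eps_{n+1})=e^{\chi_{w+1}-\chi_{u+2}-\chi_{v+1}+\eps_1}\,[C]_{u+v-w+1},
\]
where in each case the extracted power of $t$ is precisely the degree dictated by Theorem~\ref{t.pn.1} for the index triples $(u-1,v-1,w-1)$ and $(u,v-1,w-1)$. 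Multiplying the first line by $(e^{\eps_{u+1}-\eps_1}-1)$ and the second by $e^{\eps_{u+2}-\eps_1}$, the prefactors collapse—using $e^{\eps_1}(e^{\eps_{u+1}-\eps_1}-1)=e^{\eps_{u+1}}-e^{\eps_1}$ for the first, and $\chi_{u+2}-\eps_{u+2}=\chi_{u+1}$ for the second—to the common factor $e^{\chi_{w+1}-\chi_{u+1}-\chi_{v+1}}$. The asserted recurrence is thereby reduced to the purely algebraic identity
\[
[A]_{u+v-w+1}=(e^{\eps_{u+1}}-e^{\eps_1})\,[B]_{u+v-w}+[C]_{u+v-w+1}.
\]

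This last identity I would verify by expanding $A=(1-te^{\eps_1})B$ and $C=(1-te^{\eps_{u+1}})B$: both $[A]_{u+v-w+1}$ and $[C]_{u+v-w+1}$ equal $[B]_{u+v-w+1}$ diminished by $e^{\eps_1}$, respectively $e^{\eps_{u+1}}$, times $[B]_{u+v-w}$, and substituting these shows the right-hand side equals $[B]_{u+v-w+1}-e^{\eps_1}[B]_{u+v-w}=[A]_{u+v-w+1}$. I expect the matching of the exponential prefactors after the shift $\eps_i\mapsto\eps_{i+1}$ to be the main (though elementary) source of potential error, so that bookkeeping is where I would be most careful. Finally I would treat the degenerate index cases—when $u$, $v$, or $w$ leaves $[0,n]$, or when $\eps_{u+2}$ formally falls outside $\{\eps_1,\dots,\eps_{n+1}\}$—by observing that Corollary~\ref{c.pn.2} forces the relevant $\tilde{p}$ to vanish, in agreement with the stated convention $\tilde{p}^w_{u,v}=0$ for negative indices, so the identity persists in these boundary cases.
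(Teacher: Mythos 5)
Your strategy is the paper's own: derive the recurrence from the closed formula of Theorem \ref{t.pn.1} by splitting the rational function $A$ and tracking the prefactors under $\eps_i\mapsto\eps_{i+1}$. For $u\geq 1$ your argument is correct and is essentially a cosmetic rearrangement of the paper's proof: the paper writes $A=(A-B)+B$ with its $B=(1-te^{\eps_{u+1}})\prod_{i=2}^{v}(1-te^{\eps_i})/\prod_{j=u+1}^{w+1}(1-te^{\eps_j})$, which coincides with your $C$ (after cancelling common factors), and your identity $[A]_{u+v-w+1}=(e^{\eps_{u+1}}-e^{\eps_1})[B]_{u+v-w}+[C]_{u+v-w+1}$ is the same algebra; your prefactor bookkeeping, including $e^{\eps_1}(e^{\eps_{u+1}-\eps_1}-1)=e^{\eps_{u+1}}-e^{\eps_1}$ and $\chi_{u+2}-\eps_{u+2}=\chi_{u+1}$, checks out.

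There is, however, a genuine flaw at $u=0$ --- a case the theorem covers (only $v\geq 1$ is assumed) and which is actually used in the induction proving Theorem \ref{t.pn.3}. When $u=0$ and $v\geq 1$, the numerator of $A$ contains $(1-te^{\eps_1})$ only once, so it cancels against the denominator and $A=B$, not $(1-te^{\eps_1})B$. For the same reason your second identification fails there: after the shift, the closed formula for $\tilde{p}^{w-1}_{u,v-1}$ has numerator $\prod_{i=2}^{u+1}(1-te^{\eps_i})\,\prod_{i=2}^{v}(1-te^{\eps_i})$, and $\prod_{i=2}^{u+1}$ is empty at $u=0$, whereas your $C=(1-te^{\eps_{u+1}})B$ carries a spurious factor $(1-te^{\eps_1})$; the correct statement at $u=0$ is $\tilde{p}^{w-1}_{0,v-1}(\eps_2,\dots,\eps_{n+1})=e^{\chi_{w+1}-\chi_{2}-\chi_{v+1}+\eps_1}[B]_{v-w+1}$. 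Concretely, for $u=0$, $w=v$ one has $B=1/(1-te^{\eps_{v+1}})$, so $[C]_1=e^{\eps_{v+1}}-e^{\eps_1}\neq e^{\eps_{v+1}}=[B]_1$. Your closing paragraph on degenerate cases does not catch this, because at $u=0$ nothing vanishes by Corollary \ref{c.pn.2}: the second term $e^{\eps_2-\eps_1}\tilde{p}^{w-1}_{0,v-1}(\eps_2,\dots,\eps_{n+1})$ is genuinely nonzero (only the first term dies, both through the convention and through the factor $e^{\eps_{u+1}-\eps_1}-1=0$). As it happens, your two errors cancel --- using $A=B$ and the corrected $[B]$-identification, the recurrence at $u=0$ follows at once --- so the theorem is unharmed, but the proof as written asserts two false intermediate identities at $u=0$ and must be repaired: either treat $u=0$ separately, or do as the paper does and cancel $(1-te^{\eps_{u+1}})$ into the denominator first, so that the matching with the shifted closed formula holds uniformly in $u$.
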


  \begin{proof}  Let $A$ be as in the proof of Theorem \ref{t.pn.1}.  We can cancel
common factors from the numerator and denominator to write
  \[
A = \frac{\prod^v_{i=1}  (
1-te^{\eps_i} )}{\prod^{w+1}_{j=u+1} ( 1-te^{\eps_j})} .
  \]
Let
  \[
B = \frac{ ( 1-te^{\eps_{u+1}} ) \prod^v_{i=2}  (
1-te^{\eps_i} )}{\prod^{w+1}_{j=u+1} ( 1-te^{\eps_j})}.
  \]
By Theorem \ref{t.pn.1},
  \begin{align*}
\tilde{p}^w_{u,v} &=
e^{\chi_{w+1}-\chi_{u+1}-\chi_{v+1}} [A]_{u+v+1-w}\\
&=  e^{\chi_{w+1}-\chi_{u+1}-\chi_{v+1}} [A-B+B]_{u+v+1-w}.
  \end{align*}
We have
  \begin{multline*}
e^{\chi_{w+1}-\chi_{u+1}-\chi_{v+1}} [A-B]_{u+v+1-w}\\
= e^{\chi_{w+1}-\chi_{u+1}-\chi_{v+1}}\Biggl[ \frac{t (
e^{\eps_{u+1}}-e^{\eps_1} ) \prod^v_{i=2} ( 1-te^{\eps_i} )}
{\prod^{w+1}_{j=u+1}  ( 1-te^{\eps_j} )}\Biggr]_{u+v+1-w}.\qquad
  \end{multline*}
We can get rid of the factor $t$ in the numerator by taking the part
in degree $u+v-w$.  Since $e^{\eps_{u+1}} -e^{\eps_1}$ $=
e^{\eps_1} ( e^{\eps_{u+1}-\eps_1}-1 )$, we see that the
above expression equals
  \[
 ( e^{\eps_{u+1}-\eps_1}-1 )\,
 e^{\chi_{w+1}+\eps_1-\chi_{u+1}-\chi_{v+1}} \Biggl[
\frac{\prod^v_{i=2} ( 1-te^{\eps_i} )}
{\prod^{w+1}_{j=u+1}  ( 1-te^{\eps_j} )}\Biggr]_{u+v-w}.
  \]
Using the definition $\chi_k = \eps_1 + ... + \eps_k$, we see that the
above expression equals
  \[
 ( e^{\eps_{u+1}-\eps_1} -1 )\,
\tilde{p}^{w-1}_{u-1,v-1}(\eps_2, \dots , \eps_{n+1}).
  \]
Next, we have
  \begin{multline}\label{(25)}
\qquad\quad e^{\chi_{w+1}-\chi_{u+1}-\chi_{v+1}}[B]_{u+v+1-w}\\
= e^{\chi_{w+1}-\chi_{u+1}-\chi_{v+1}}
\Biggl[ \frac{\prod^v_{i=2} ( 1-te^{\eps_i} )}
{\prod^{w+1}_{j=u+2}  ( 1-te^{\eps_j} )}\Biggr]_{u+v+1-w}.\qquad\quad
  \end{multline}
Now,
  \begin{align*}
\chi_{w+1}-\chi_{u+1}-\chi_{v+1} = (\eps_1+...+\eps_{w+1}) &-
(\eps_1+...+\eps_{u+1})\\
 &- (\eps_1+...+\eps_{v+1})\\
= (\eps_2+...+\eps_{w+1}) &- (\eps_2+...+\eps_{u+2})\\
&- (\eps_2+...+\eps_{v+1}) + (\eps_{u+2}-\eps_1).
  \end{align*}
Using this we see that the expression \eqref{(25)} equals
  \[
e^{\eps_{u+2}-\eps_{1}}\, \tilde{p}^{w-1}_{u,v-1} (\eps_2, \dots ,\eps_{n+1}).
  \]
This proves the theorem.
  \end{proof}

As an immediate consequence of these results, we can verify Conjecture \ref{conj.GK}
for projective space:

  \begin{theorem}  \label{t.pn.3} For all $0\leq u,v,w\leq n$,
  \[
\tilde{p}^w_{u,v} \in \bz_+ [ e^{-\al}-1 ]_{\al\in\Del^+}.
  \]
  \end{theorem}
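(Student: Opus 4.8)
The plan is to argue by induction on $v$, using the recurrence of Theorem \ref{t.pn.2} to reduce $v$ to $v-1$. The basic principle is that the target set $S:=\bz_+[e^{-\al}-1]_{\al\in\Del^+}$ is a semiring: it is closed under addition and under multiplication, since a product of the generators $e^{-\al}-1$ is again a $\bz_+$-combination of products of generators. Thus it suffices to exhibit each ingredient of the recurrence as an element of $S$ and to check that $S$ is stable under the operations the recurrence performs.

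First I would dispose of the base case $v=0$ using Corollary \ref{c.pn.3}: there $\tilde p^w_{u,0}$ is either $0$, or (after absorbing the sign) equal to $e^{\eps_{w+1}-\eps_1}$. Writing $e^{\eps_{w+1}-\eps_1}=1+(e^{-(\eps_1-\eps_{w+1})}-1)$ and noting $\eps_1-\eps_{w+1}\in\Del^+$, this lies in $S$. The same device handles the two scalar factors in Theorem \ref{t.pn.2}: the factor $e^{\eps_{u+1}-\eps_1}-1$ is exactly the generator $e^{-(\eps_1-\eps_{u+1})}-1\in S$ (and is $0$ when $u=0$), while $e^{\eps_{u+2}-\eps_1}=1+(e^{-(\eps_1-\eps_{u+2})}-1)\in S$. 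Hence, granting that the two structure constants appearing on the right of Theorem \ref{t.pn.2} lie in $S$, their products with these factors and the resulting sum remain in $S$, giving $\tilde p^w_{u,v}\in S$ and closing the induction.

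The one genuinely delicate point --- and what I expect to be the main obstacle --- is that Theorem \ref{t.pn.2} does not express $\tilde p^w_{u,v}$ in terms of $\tilde p^{w-1}_{u-1,v-1}$ and $\tilde p^{w-1}_{u,v-1}$ in the \emph{same} variables, but rather in the shifted variables $(\eps_2,\dots,\eps_{n+1})$, i.e.\ after the substitution $\phi\colon\eps_i\mapsto\eps_{i+1}$. To invoke the inductive hypothesis I must know that $\phi$ carries $S$ into $S$. This is true provided no index $\eps_{n+1}$ (which $\phi$ would push out of range) actually occurs, so I would strengthen the inductive statement to record that $\tilde p^w_{u,v}$ is a $\bz_+$-polynomial in the generators $e^{-(\eps_i-\eps_j)}-1$ with $1\le i<j\le w+1$ only. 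One checks this refined form is preserved: the base-case root $\eps_1-\eps_{w+1}$ uses index $w+1$; the factor roots $\eps_1-\eps_{u+1}$ and $\eps_1-\eps_{u+2}$ use indices $\le w+1$ (using $u\le w$ from Corollary \ref{c.pn.2}, and noting the second term vanishes unless $u\le w-1$); and applying $\phi$ to a generator $e^{-(\eps_i-\eps_j)}-1$ with $j\le w$ yields $e^{-(\eps_{i+1}-\eps_{j+1})}-1$ with $j+1\le w+1$, still a positive root in range. With this bookkeeping in place the substitution is harmless, and the recurrence simply assembles nonnegative generators into a nonnegative combination. Alternatively, by Corollary \ref{c.pn.4} one may pass to $\bp^N$ for $N$ large, where $\phi$ is an honest algebra endomorphism preserving $S$, avoiding the index bookkeeping.
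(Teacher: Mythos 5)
Your proposal is correct and follows essentially the same route as the paper's own proof, which consists precisely of the base case $v=0$ via Corollary \ref{c.pn.3} together with induction on $v$ using the recurrence of Theorem \ref{t.pn.2}. The additional bookkeeping you supply --- that the shift $\eps_i\mapsto\eps_{i+1}$ carries the semiring $\bz_+[e^{-\al}-1]_{\al\in\Del^+}$ into itself (via the strengthened hypothesis bounding the indices by $w+1$, or via Corollary \ref{c.pn.4}) --- is exactly the detail the paper leaves implicit, and you have verified it correctly.
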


  \begin{proof}  This holds if $v=0$ by Corollary \ref{c.pn.3}.  The general case
follows by induction using the recurrence of Theorem \ref{t.pn.2}.
  \end{proof}

  \begin{remark}  (a) For $0\leq u,v,w\leq n$, define
  \[
\tilde{q}^w_{u,v} = (-1)^{u+v+w}\, \chi (X_w\cap X^u, \xi^v).
  \]
Then, by the same proof as that of Theorems \ref{t.pn.1}
and \ref{t.pn.2}, we have the recursion
  \[
\tilde{q}^w_{u,v} =  ( e^{\veps_{u+1}-\veps_1}-1 )\,
\tilde{q}^{w-1}_{u-1,v-1}(\veps_2,\dots ,\veps_{n+1}) +
e^{\veps_{u+1}-\veps_1}\, \tilde{q}^{w-1}_{u,v-1}(\veps_2,\dots ,\veps_{n+1}).
  \]
So, by induction on $w$, we get that
  \[
\tilde{q}^w_{u,v} \in\bz_+ [e^{-\beta}-1]_{\beta\in\Del^+} .
  \]

  (b) As a consequence of Theorem \ref{t.pn.2} and Corollary \ref{c.pn.3}, we get
  that in the non-equivariant $K$-theory $K(\Bbb P^n)$, we have
  \[p^w_{u,v}=0, \,\,\text{for}\, u+v>w\]
  and
  \[p^w_{u,w-u}=1, \,\,\text{for any} \,u\leq w; \,\,p^w_{u,w-u-1}=-1,
  \,\,\text{for any} \,u\leq w-1.\]
  \end{remark}

  \subsection{Structure constants with respect to the structure sheaf basis of $K_T(\Bbb P^n)$}
  We give explicit formulas for the structure constants with respect to the structure sheaf basis.
 These are strikingly similar to the formulas for the structure constants in the
  dual structure sheaf basis, but they differ subtly.  We will state these formulas here.  We omit most details of
  the proofs, which are very similar to the proofs in the previous subsection.

  Let $w_o$ denote the longest element of the Weyl group of $SL_{n+1}$, so
  $w_o(\veps_{i}) = \veps_{n+2 - i}$.  For $u \in [n]$, let $\bar{u} = n-u$.
  To state our formulas, it will be convenient to introduce the notation
  $r_{u,v}^w = w_o(b_{\bar{u},\bar{v}}^{\bar{w}})$.

    \begin{theorem} \label{t.pn.4} For any $0\leq u,v,w\leq n$,
  \begin{multline*}
(-1)^{u+v+w} r^w_{u,v}\\
 = e^{\chi_{w}-\chi_{u}-\chi_{v}} \biggl[
\frac{\bigl(\prod^u_{i=1} (1-te^{\eps_i})\bigr)\, \bigl(\prod^v_{i=1} (1-te^{\eps_i})\bigr)}
{\prod^{w+1}_{i=1} (1-te^{\eps_i})}\bigg]_{u+v-w} .\qquad\quad
  \end{multline*}
  \end{theorem}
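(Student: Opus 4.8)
The plan is to use the relation $r^w_{u,v}=w_o(b^{\bar w}_{\bar u,\bar v})$ to recast the $r^w_{u,v}$ as structure constants in the \emph{opposite} Schubert structure sheaf basis, and then to run the argument of Theorem \ref{t.pn.1} almost verbatim, with all indices shifted by one. Let $\sigma\colon\bp^n\to\bp^n$ be the coordinate-reversing automorphism induced by a representative of $w_o$; then $\sigma(X_a)=X^{\bar a}$, and $\sigma_*\colon K_T(\bp^n)\to K_T(\bp^n)$ is a ring isomorphism that is semilinear over the involution $e^{\eps_i}\mapsto e^{\eps_{n+2-i}}$ of $R(T)$. Applying $\sigma_*$ to $[\co_{X_{\bar u}}]\,[\co_{X_{\bar v}}]=\sum_a b^a_{\bar u,\bar v}[\co_{X_a}]$ and setting $w=\bar a$ yields
\[
[\co_{X^u}]\,[\co_{X^v}]=\sum_{w}\, r^w_{u,v}\,[\co_{X^w}],
\]
so that $r^w_{u,v}$ is the coefficient of $[\co_{X^w}]$ in the product $[\co_{X^u}][\co_{X^v}]$. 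Transporting Proposition \ref{p.2.1} through $\sigma$ (equivalently, using the $B\leftrightarrow B^-$ symmetry) shows that the basis dual to $\{[\co_{X^w}]\}$ under $\langle\cdot,\cdot\rangle$ is $\{[\co_{X_w}(-\partial X_w)]\}$, whence
\[
r^w_{u,v}=\chi\bigl(\bp^n,\ [\co_{X^u}]\otimes[\co_{X^v}]\otimes[\co_{X_w}(-\partial X_w)]\bigr).
\]

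The one point requiring care is that two opposite Schubert sheaves do not meet properly, so I must never multiply $[\co_{X^u}]$ and $[\co_{X^v}]$ directly. Instead I keep $[\co_{X^v}]$ in the line-bundle form $[\co_{X^v}]=\prod_{i=1}^{v}\bigl(1-e^{-\eps_i}[\cl(-\eps_1)]\bigr)$ (Lemma \ref{l.pn.5}), and pair only a Schubert sheaf against an opposite Schubert sheaf. Since $\partial X_w=X_{w-1}$ in $\bp^n$, I write $[\co_{X_w}(-\partial X_w)]=[\co_{X_w}]-[\co_{X_{w-1}}]$, group $[\co_{X^u}][\co_{X_w}]=[\co_{X_w\cap X^u}]$ by \cite[Lemma 1]{Bri:02}, and apply the projection formula (as in Lemma \ref{l.pn.2}) to obtain
\[
r^w_{u,v}=\chi\bigl(X_w\cap X^u,\,[\co_{X^v}]\bigr)-\chi\bigl(X_{w-1}\cap X^u,\,[\co_{X^v}]\bigr).
\]

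Each term is then computed exactly as in the proof of Theorem \ref{t.pn.1}. On the projective space $X_w\cap X^u$ (nonempty iff $w\ge u$, of dimension $w-u$) I expand $[\co_{X^v}]=\sum_{p=0}^{v}(-1)^pE_p(e^{-\eps_1},\dots,e^{-\eps_v})[\cl(-p\eps_1)]$, apply Serre duality with the dualizing-sheaf formula of Lemma \ref{l.pn.4}, and note via \cite[Theorem III.5.1]{Har:77} that only $H^0$ survives. This produces the same generating function $\prod_{i=1}^{v}(1-se^{-\eps_i})\cdot\prod_{j=u+1}^{w+1}(1-te^{\eps_j})^{-1}$ as before; the only differences are that $[\co_{X^v}]$ carries one fewer power of $\cl(-\eps_1)$ and no prefactor $e^{-\eps_{v+1}}$ compared with $\xi^v$ (indeed $\xi^v=e^{-\eps_{v+1}}[\cl(-\eps_1)]\,[\co_{X^v}]$). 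Hence, with $A=\frac{\bigl(\prod_{i=1}^{u}(1-te^{\eps_i})\bigr)\bigl(\prod_{i=1}^{v}(1-te^{\eps_i})\bigr)}{\prod_{i=1}^{w+1}(1-te^{\eps_i})}$, one gets $\chi(X_w\cap X^u,[\co_{X^v}])=(-1)^{u+v+w}e^{\chi_{w+1}-\chi_u-\chi_v}[A]_{u+v-w-1}$.

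Finally I form the difference over $w$ and $w-1$. Writing $A'$ for the analogue of $A$ with denominator $\prod_{i=1}^{w}(1-te^{\eps_i})$, so that $A'=A\,(1-te^{\eps_{w+1}})$ and $\chi_{w+1}=\chi_w+\eps_{w+1}$, the two contributions combine and the degree-$(u+v-w-1)$ terms cancel, leaving
\[
(-1)^{u+v+w}r^w_{u,v}=e^{\chi_w-\chi_u-\chi_v}\,[A]_{u+v-w},
\]
which is the asserted formula. The difference over $w$ here plays exactly the role that the difference over $u$ played in Theorem \ref{t.pn.1}, bumping the extracted degree up by one. I expect the main obstacle to be not a new idea but the careful bookkeeping of these one-step index shifts; a pleasant bonus is that the uniform expression for $[\co_{X^v}]$ eliminates the separate treatment of the case $u=v=n$ that was needed for the dual basis.
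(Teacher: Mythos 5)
Your overall strategy is sound and is essentially the mirror image of the paper's own proof: the paper reduces $b^{\bar{w}}_{\bar{u},\bar{v}}$ to the difference $\chi(X_{\bar{u}}\cap X^{\bar{w}},[\co_{X_{\bar{v}}}])-\chi(X_{\bar{u}}\cap X^{\bar{w}+1},[\co_{X_{\bar{v}}}])$ and then invokes the computation of Theorem \ref{t.pn.1} with details omitted; your identity $r^w_{u,v}=\chi(X_w\cap X^u,[\co_{X^v}])-\chi(X_{w-1}\cap X^u,[\co_{X^v}])$ is just the $w_o$-translate of theirs, and you are attempting precisely the omitted details. However, there is a genuine error in your key intermediate step. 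When you expand $[\co_{X^v}]=\sum_{p=0}^v(-1)^pE_p(e^{-\eps_1},\dots,e^{-\eps_v})[\cl(-p\eps_1)]$ and apply Serre duality, the resulting exponent is $p-w+u-1$ rather than the $p-w+u$ that occurs for $\xi^v$; at $p=0$ this equals $-(w-u)-1$, the degree of the dualizing sheaf of $\bp^{w-u}$, so the top cohomology does \emph{not} vanish. Equivalently, the $p=0$ summand contributes $\chi(X_w\cap X^u,\co)=1$, which your generating function (recording only $S^q$ with $q\geq 0$, i.e., only $H^0$) silently drops. Thus your claim that ``only $H^0$ survives'' fails exactly at $p=0$, and your displayed identity $\chi(X_w\cap X^u,[\co_{X^v}])=(-1)^{u+v+w}e^{\chi_{w+1}-\chi_u-\chi_v}[A]_{u+v-w-1}$ is off by exactly $1$: for $u=v=0$ the left side is $\chi(X_w,\co_{X_w})=1$ while the right side is $0$. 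This is precisely the one-step index shift you warned yourself about: for $\xi^v$ the extra twist by $\cl(-\eps_1)$ keeps the minimal exponent at $-1>-(w-u)-1$ whenever $w>u$, so the paper's vanishing claim in Theorem \ref{t.pn.1} is genuinely correct, while yours is not.

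Your final formula nevertheless comes out right, but only through an unremarked cancellation of two errors: the discrepancy is the constant $+1$ for every $w\geq u$, and it cancels in the telescoping difference between $w$ and $w-1$; moreover, when $w=u$ your formal expression for the second term $\chi(X_{u-1}\cap X^u,[\co_{X^v}])$ evaluates to $-1$ even though the intersection is empty and the true value is $0$ --- a second compensating error that again restores the correct difference. As written, then, the proof asserts false identities and its conclusion rests on luck rather than justification. The fix is straightforward: either track the $p=0$ term separately (it contributes $\chi(X_w\cap X^u,\co)=1$ whenever $w\geq u$ and $0$ for empty intersections) and verify explicitly that these constants cancel in the difference, or sidestep the issue entirely by keeping one twist of $\cl(-\eps_1)$ in play --- for instance by pairing against $\xi$-type classes as in the paper's reduction $b_{\bar{u},\bar{v}}^{\bar{w}}=\chi(X\times X, D_*[\co_{X_{\bar{u}}}]\otimes(\xi^{\bar{w}}\boxtimes[\co_{X_{\bar{v}}}]))$ --- so that the $H^0$-vanishing claim becomes literally valid.
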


  \begin{proof}
  Using Remark \ref{r.GR} and Proposition \ref{p.2.1}, we see that
    $$
  b_{\bar{u},\bar{v}}^{\bar{w}}= \chi  ( X\times X,\, D_*\xi^{\bar{w}}\otimes
([ \co_{X_{\bar{u}}}] \boxtimes [ \co_{X_{\bar{v}}}] )).
$$
Arguing as in the proof of Proposition \ref{p.2.1}, we see  that the right  side
of the above identity is equal
to
$$
\chi(X \times X, D_* [\co_{X_{\bar{u}}}] \otimes (\xi^{\bar{w}} \boxtimes [\co_{X_{\bar{v}}}] )).
$$
The case $w = 0$ of the theorem can be checked separately, so assume $w>0$,
i.e., $\bar{w} <n$.  Then, the above expression equals
$$
\chi(X_{\bar{u}} \cap X^{\bar{w}}, [\co_{X_{\bar{v}}}]) -
\chi(X_{\bar{u}} \cap X^{\bar{w}+1}, [\co_{X_{\bar{v}}}]) .
$$
This can be calculated as in the proof of Theorem \ref{t.pn.1}; we omit the details.
\end{proof}

Arguing as in the proof of Corollary \ref{c.pn.2} gives the following result.

\begin{corollary} \label{c.pn.5}
If $r^w_{u,v} \neq 0$, then $u,v\leq w\leq u+v$.
\end{corollary}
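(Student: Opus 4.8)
The plan is to mimic the argument used for Corollary \ref{c.pn.2}, working directly from the explicit formula of Theorem \ref{t.pn.4}. Keeping the notation of the proof of Theorem \ref{t.pn.1}, write
\[
A = \frac{\bigl(\prod^u_{i=1}(1-te^{\eps_i})\bigr)\bigl(\prod^v_{i=1}(1-te^{\eps_i})\bigr)}{\prod^{w+1}_{i=1}(1-te^{\eps_i})},
\]
so that Theorem \ref{t.pn.4} reads $(-1)^{u+v+w} r^w_{u,v} = e^{\chi_{w}-\chi_{u}-\chi_{v}}[A]_{u+v-w}$. The two bounds on $w$ will come from examining the $t$-expansion of $A$ in the two relevant degree regimes.

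First I would establish the upper bound $w \le u+v$. Since the denominator $\prod^{w+1}_{i=1}(1-te^{\eps_i})$ has constant term $1$, the power-series expansion of $A$ about $t=0$ contains no negative powers of $t$; in particular $[A]_m = 0$ for every $m<0$. Hence if $r^w_{u,v}\neq 0$ then necessarily $u+v-w\ge 0$, i.e. $w\le u+v$.

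For the lower bounds I would argue by a degree count after cancellation. If $w \le u-1$, i.e. $w+1\le u$, then every factor $(1-te^{\eps_i})$ occurring in the denominator (for $i=1,\dots,w+1$) already appears among the factors of the first product $\prod^u_{i=1}(1-te^{\eps_i})$ in the numerator. After cancelling these common factors, $A$ is a polynomial in $t$ of degree $(u+v)-(w+1)=u+v-w-1$. Since $u+v-w-1 < u+v-w$, this gives $[A]_{u+v-w}=0$ and hence $r^w_{u,v}=0$. Thus $r^w_{u,v}\neq 0$ forces $u\le w$; interchanging the roles of $u$ and $v$ --- under which $A$, the degree $u+v-w$, and the prefactor $e^{\chi_{w}-\chi_{u}-\chi_{v}}$ are all symmetric --- yields $v\le w$ as well. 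Combining the three inequalities gives $u,v\le w\le u+v$.

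I do not anticipate a serious obstacle, as the proof is essentially a transcription of the Corollary \ref{c.pn.2} argument with the bracket degree lowered from $u+v-w+1$ to $u+v-w$. The only points needing care are routine bookkeeping: verifying that the $w+1$ denominator factors form a sub-collection of the numerator factors exactly when $w+1\le u$ (so that cancellation leaves a genuine polynomial), and observing that it is this shifted bracket degree --- rather than the cancellation step --- that sharpens the upper bound from $u+v+1$ in the $p$-case to $u+v$ here.
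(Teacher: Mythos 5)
Your proof is correct and is exactly the paper's intended argument: the paper's proof of Corollary \ref{c.pn.5} consists of the single remark ``Arguing as in the proof of Corollary \ref{c.pn.2} gives the following result,'' and your write-up is precisely that argument carried out for the formula of Theorem \ref{t.pn.4}, with the bracket degree $u+v-w$ correctly accounting for the sharper bound $w\le u+v$.
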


The next result gives ``initial condition" for the $r_{u,v}^w$.
\begin{proposition} \label{p.pn.initialr}
$r_{u,0}^w = \delta_{u,w}$.
\end{proposition}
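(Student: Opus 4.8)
The plan is to exploit the fact that, by definition, $r_{u,0}^w = w_o\bigl(b_{\bar u,\bar 0}^{\bar w}\bigr)$ where $\bar 0 = n$, and that $X_n$ is all of $\bp^n$. Indeed, since $\dim X_u = u$ we have $X_n = \{[x_1,\dots,x_{n+1}]\} = X$, so $[\co_{X_n}] = [\co_X] = 1$ is the multiplicative identity of the ring $K_T(\bp^n)$. Thus multiplication by $[\co_{X_n}]$ in the structure sheaf basis is trivial, and reading off the defining expansion of the $b$-constants immediately pins them down.

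Concretely, I would write $[\co_{X_{\bar u}}]\cdot[\co_{X_n}] = [\co_{X_{\bar u}}] = \sum_{w} b_{\bar u,n}^{w}[\co_{X_w}]$; since $\{[\co_{X_w}]\}$ is a basis this forces $b_{\bar u,n}^{w} = \delta_{w,\bar u}$, and hence $b_{\bar u,n}^{\bar w} = \delta_{\bar w,\bar u} = \delta_{u,w}$. Applying $w_o$ then gives $r_{u,0}^w = w_o(\delta_{u,w}) = \delta_{u,w}$, the last equality because $\delta_{u,w} \in \{0,1\} \subset \bz$ is a constant and constants are fixed by the $W$-action on $R(T)$. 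The only point requiring a moment's care is this bookkeeping with the bars and with $w_o$-invariance; everything else is formal.

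As an independent check (and the route more in the spirit of the preceding subsection, paralleling Corollary \ref{c.pn.3}), I would set $v=0$ in the explicit formula of Theorem \ref{t.pn.4}: there $\chi_0 = 0$ and the $v$-product is empty, so $(-1)^{u+w} r_{u,0}^w = e^{\chi_w - \chi_u}\bigl[\prod_{i=1}^{u}(1-te^{\eps_i})/\prod_{i=1}^{w+1}(1-te^{\eps_i})\bigr]_{u-w}$. By Corollary \ref{c.pn.5} this can be nonzero only when $u \le w \le u$, i.e.\ $w=u$, and in that case the bracketed expression collapses to $\bigl[1/(1-te^{\eps_{u+1}})\bigr]_0 = 1$, so $r_{u,u}^{\phantom{u}u}=1$. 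Either argument yields $r_{u,0}^w = \delta_{u,w}$; given how elementary the statement is, there is essentially no genuine obstacle, the conceptual $[\co_{X_n}]=1$ argument being the cleanest to present.
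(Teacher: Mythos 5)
Your proof is correct and coincides with the paper's own: the paper proves Proposition \ref{p.pn.initialr} precisely by observing that $[\co_{X_n}]$ is the identity element of $K_T(\bp^n)$, and it notes, as you do, that the statement can alternatively be deduced from Theorem \ref{t.pn.4}. Your bar-and-$w_o$ bookkeeping (using that $\delta_{u,w}\in\bz$ is $W$-invariant) is exactly the implicit content of the paper's one-line argument.
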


\begin{proof}
This follows because $[\co_{X_n}]$ is the identity
element in $K_T(\Bbb P^n)$.  (Alternatively,
the proposition can be deduced from Theorem \ref{t.pn.4}.)
\end{proof}

Write $\tilde{r}_{u,v}^w = (-1)^{u+v+w}r_{u,v}^w$.

\begin{theorem} \label{t.pn.5} If $v \geq 1$, then
  \[
\tilde{r}^w_{u,v} =  ( e^{\eps_{u+1}-\eps_1}-1 )\,
\tilde{r}^{w-1}_{u-1,v-1}(\eps_2, \dots ,\eps_{n+1})
+ e^{\eps_{u+1}-\eps_1}\, \tilde{r}^{w-1}_{u,v-1}(\eps_2, \dots ,
\eps_{n+1}).
  \]
  \end{theorem}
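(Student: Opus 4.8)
The plan is to run the proof of Theorem~\ref{t.pn.2} essentially verbatim, replacing the explicit formula of Theorem~\ref{t.pn.1} by that of Theorem~\ref{t.pn.4}. After cancelling the common factors $\prod_{i=1}^u(1-te^{\eps_i})$ from numerator and denominator in Theorem~\ref{t.pn.4}, set
\[
A = \frac{\prod_{i=1}^v (1-te^{\eps_i})}{\prod_{j=u+1}^{w+1}(1-te^{\eps_j})},
\]
so that Theorem~\ref{t.pn.4} reads $\tilde r^w_{u,v} = e^{\chi_w-\chi_u-\chi_v}[A]_{u+v-w}$. Exactly as before I would introduce the auxiliary function
\[
B = \frac{(1-te^{\eps_{u+1}})\prod_{i=2}^v(1-te^{\eps_i})}{\prod_{j=u+1}^{w+1}(1-te^{\eps_j})} = \frac{\prod_{i=2}^v(1-te^{\eps_i})}{\prod_{j=u+2}^{w+1}(1-te^{\eps_j})}
\]
and split $[A]_{u+v-w} = [A-B]_{u+v-w} + [B]_{u+v-w}$, treating the two pieces separately.

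For the first piece, the identity $(1-te^{\eps_1})-(1-te^{\eps_{u+1}}) = te^{\eps_1}(e^{\eps_{u+1}-\eps_1}-1)$ gives
\[
A-B = te^{\eps_1}(e^{\eps_{u+1}-\eps_1}-1)\,\frac{\prod_{i=2}^v(1-te^{\eps_i})}{\prod_{j=u+1}^{w+1}(1-te^{\eps_j})}.
\]
Extracting the coefficient of degree $u+v-w$ absorbs the factor $t$ and drops the target degree to $u+v-w-1$. Comparing the result against Theorem~\ref{t.pn.4} applied to $\tilde r^{w-1}_{u-1,v-1}$ in the shifted variables $(\eps_2,\dots,\eps_{n+1})$, for which $\chi'_k = \eps_2+\cdots+\eps_{k+1}$ so that the prefactor exponent works out to $\chi_w-\chi_u-\chi_v+\eps_1$, I expect this piece to collapse to exactly $(e^{\eps_{u+1}-\eps_1}-1)\,\tilde r^{w-1}_{u-1,v-1}(\eps_2,\dots,\eps_{n+1})$, the first term of the asserted recurrence.

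For the second piece, the reduced form of $B$ agrees on the nose, and at the same degree $u+v-w$, with the bracketed rational function attached to $\tilde r^{w-1}_{u,v-1}(\eps_2,\dots,\eps_{n+1})$; the only work is in the exponential prefactor. Here the shifted partial sums give $\chi'_{w-1}-\chi'_u-\chi'_{v-1} = \chi_w-\chi_{u+1}-\chi_v+\eps_1 = (\chi_w-\chi_u-\chi_v)-\eps_{u+1}+\eps_1$, so that $e^{\chi_w-\chi_u-\chi_v}[B]_{u+v-w} = e^{\eps_{u+1}-\eps_1}\,\tilde r^{w-1}_{u,v-1}(\eps_2,\dots,\eps_{n+1})$. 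Summing the two pieces then produces the stated formula.

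The main, and indeed essentially the only, point requiring care is the bookkeeping of these exponential prefactors, and it is precisely here that the argument diverges from Theorem~\ref{t.pn.2}. In that theorem the prefactor is $e^{\chi_{w+1}-\chi_{u+1}-\chi_{v+1}}$ and the extracted degree is $u+v+1-w$, whereas here they are $e^{\chi_w-\chi_u-\chi_v}$ and $u+v-w$; the resulting single shift in the $u$-subscript is exactly what turns the coefficient $e^{\eps_{u+2}-\eps_1}$ of Theorem~\ref{t.pn.2} into the coefficient $e^{\eps_{u+1}-\eps_1}$ needed here. Since the explicit formulas of Theorems~\ref{t.pn.1} and~\ref{t.pn.4} are otherwise identical in shape, I anticipate no genuinely new difficulty beyond carefully tracking these exponents and degrees.
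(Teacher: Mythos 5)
Your proposal is correct and is precisely the argument the paper intends: its proof of Theorem \ref{t.pn.5} simply says ``similar to the proof of Theorem \ref{t.pn.2}; we omit the details,'' and you have carried out that omitted adaptation accurately, including the key bookkeeping that the shifted prefactor $e^{\chi_w-\chi_u-\chi_v}$ and degree $u+v-w$ turn the coefficient $e^{\eps_{u+2}-\eps_1}$ of Theorem \ref{t.pn.2} into $e^{\eps_{u+1}-\eps_1}$ here. All exponent computations (e.g., $\chi'_{w-1}-\chi'_{u}-\chi'_{v-1}=\chi_w-\chi_u-\chi_v-\eps_{u+1}+\eps_1$) check out.
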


  \begin{proof}
  This is similar to the proof of Theorem \ref{t.pn.2}; we omit the details.
  \end{proof}

  The above two  results imply that Conjecture \ref{conj.GR} holds for projective spaces
  (cf. Remark \ref{r.GR}).

  \begin{theorem} \label{t.pn.6}
   For all $0\leq u,v,w\leq n$,
  \[
\tilde{r}^w_{u,v} \in \bz_+ [ e^{-\al}-1 ]_{\al\in\Del^+}.
  \]
Hence,
 \[
(-1)^{n+u+v+w} b^w_{u,v} \in \bz_+ [ e^{\al}-1 ]_{\al\in\Del^+}.
  \]
  \end{theorem}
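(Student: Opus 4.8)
The plan is to establish the two displayed assertions in turn: the first, positivity of $\tilde r^w_{u,v}$, by induction exactly paralleling the proof of Theorem \ref{t.pn.3}; the second, positivity of $(-1)^{n+u+v+w}b^w_{u,v}$, by a purely formal reduction to the first via the action of the longest Weyl group element $w_o$.

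For the first assertion I would induct on $v$ (uniformly over $n$). The base case $v=0$ is Proposition \ref{p.pn.initialr}, which gives $\tilde r^w_{u,0}=\delta_{u,w}\in\{0,1\}\subset\bz_+[e^{-\al}-1]_{\al\in\Del^+}$. For the inductive step I would apply the recurrence of Theorem \ref{t.pn.5}, using three facts: (i) $\bz_+[e^{-\al}-1]_{\al\in\Del^+}$ is a subsemiring of $R(T)$ containing $1$, hence closed under sums and products; (ii) for $u\ge 1$ the coefficient $e^{\eps_{u+1}-\eps_1}$ equals $e^{-\beta}$ with $\beta=\eps_1-\eps_{u+1}\in\Del^+$, so that both $e^{\eps_{u+1}-\eps_1}-1=(e^{-\beta}-1)$ and $e^{\eps_{u+1}-\eps_1}=1+(e^{-\beta}-1)$ lie in $\bz_+[e^{-\al}-1]_{\al\in\Del^+}$ (for $u=0$ these coefficients are $0$ and $1$); and (iii) the substitution $\eps_k\mapsto\eps_{k+1}$ appearing in the recurrence carries a positive root $\eps_i-\eps_j$ of $SL_n$ (so $1\le i<j\le n$) to the positive root $\eps_{i+1}-\eps_{j+1}$ of $SL_{n+1}$, hence carries $\bz_+[e^{-\al}-1]$ for $\Bbb P^{n-1}$ into $\bz_+[e^{-\al}-1]$ for $\Bbb P^n$. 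Combining (i)--(iii) with the inductive hypothesis applied to $\tilde r^{w-1}_{u-1,v-1}$ and $\tilde r^{w-1}_{u,v-1}$ shows each summand, and hence $\tilde r^w_{u,v}$, lies in $\bz_+[e^{-\al}-1]_{\al\in\Del^+}$.

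For the second assertion I would unwind the definitions $r^w_{u,v}=w_o(b^{\bar w}_{\bar u,\bar v})$ and $\tilde r^w_{u,v}=(-1)^{u+v+w}r^w_{u,v}$ together with $w_o^2=1$. Solving for $b$, and using $\bar u=n-u$, gives for arbitrary $u,v,w\in[n]$ the identity
\[
(-1)^{n+u+v+w}\,b^w_{u,v}=w_o\bigl(\tilde r^{\bar w}_{\bar u,\bar v}\bigr),
\]
where the overall sign comes from $(-1)^{\bar u+\bar v+\bar w}=(-1)^{3n-(u+v+w)}=(-1)^{n+u+v+w}$. By the first assertion the argument on the right lies in $\bz_+[e^{-\al}-1]_{\al\in\Del^+}$. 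Since $w_o$ acts on $R(T)$ as the ring automorphism $e^\lambda\mapsto e^{w_o\lambda}$ and sends every positive root $\al$ to a negative root $-\al^*$ with $\al^*\in\Del^+$, it carries each generator $e^{-\al}-1$ to $e^{\al^*}-1$; hence $w_o$ maps $\bz_+[e^{-\al}-1]_{\al\in\Del^+}$ into $\bz_+[e^{\al}-1]_{\al\in\Del^+}$. Applying this to the displayed identity yields $(-1)^{n+u+v+w}b^w_{u,v}\in\bz_+[e^{\al}-1]_{\al\in\Del^+}$, as required.

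The routine but error-prone part --- and the only place demanding real care --- is the sign and index bookkeeping in the reduction of the second assertion to the first: one must correctly track the three reflections $u\mapsto\bar u$, the parity of $3n-(u+v+w)$, and the fact that $w_o$ interchanges the semirings $\bz_+[e^{-\al}-1]$ and $\bz_+[e^{\al}-1]$. Once these are verified, both halves of the theorem are essentially immediate from Theorem \ref{t.pn.5} and Proposition \ref{p.pn.initialr}.
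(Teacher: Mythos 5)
Your proposal is correct and takes essentially the same approach as the paper: the first assertion by induction from the initial condition of Proposition \ref{p.pn.initialr} and the recurrence of Theorem \ref{t.pn.5}, and the second by applying $w_o$ (which exchanges positive and negative roots) to the identity $(-1)^{n+u+v+w}\,b^w_{u,v}=w_o\bigl(\tilde r^{\bar w}_{\bar u,\bar v}\bigr)$. The paper compresses this into two lines; your version merely makes explicit the sign computation $(-1)^{\bar u+\bar v+\bar w}=(-1)^{3n-(u+v+w)}=(-1)^{n+u+v+w}$, the positivity of the coefficients $e^{\eps_{u+1}-\eps_1}-1$ and $e^{\eps_{u+1}-\eps_1}$, and the fact that the shift $\eps_k\mapsto\eps_{k+1}$ and the automorphism $w_o$ respect the relevant semirings --- all of which check out.
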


  \begin{proof}
  The first result follows from Proposition \ref{p.pn.initialr} and Theorem \ref{t.pn.5}.
  The second result follows from the first, since $w_o$ takes negative
  roots to positive roots.
  \end{proof}

  \section{A more general positivity conjecture} \label{s.general}
We revert to the notation and assumptions of Section 3.
The following conjecture is an equivariant generalization of
\cite[Theorem 1]{Bri:02}.  By Proposition \ref{p.pn.1},
this conjecture, with $G \times G$ in place of $G$ and $T'$ equal to the diagonal torus in $T \times T$,
would imply Conjecture \ref{conj.GK}.

    \begin{conjecture}  \label{conj.6.1} Let $T'$ be a subtorus of $T$ and let
    $Y\subset G/P$ be a $T'$-stable
irreducible subvariety with rational singularities.  Express, in $K_{T'}(G/P)$,
  \[
[\co_Y] = \sum_{w\in W^P} a^Y_w [\co_{X^P_w}] .
  \]
Then,
  \[
(-1)^{\codim Y +\codim X_w^P}\, a^Y_w\in\bz_+
[e^{-\beta}_{\mid T'}-1]_{\beta\in\Del^+} .
  \]
  \end{conjecture}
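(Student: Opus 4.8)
The plan is to compute the coefficient $a^Y_w$ as a twisted Euler characteristic and then read off its sign and positivity by an equivariant Serre-duality argument, in the spirit of the proofs of Proposition~\ref{p.positive} and Theorem~\ref{t.pn.1}, but now for an arbitrary $T'$-stable subvariety. Working in $K_{T'}(G/P)$, the $T'$-equivariant analogue of Proposition~\ref{p.2.1} (whose inputs are the purely geometric Euler characteristics $\chi(\co_{X^P_w\cap X^v_P})\in\{0,1\}$, hence insensitive to the choice of torus) shows that $\{\xi^w_P\}$ remains dual to $\{[\co_{X^P_w}]\}$, so that
\[
a^Y_w = \langle [\co_Y], \xi^w_P\rangle = \chi\bigl(G/P,\ [\co_Y]\otimes[\co_{X^w_P}(-\partial X^w_P)]\bigr).
\]
The first step is to push this Euler characteristic onto the intersection $Y\cap X^w_P$: if $Y$ meets $X^w_P$ and $\partial X^w_P$ properly, then, since $Y$, $X^w_P$ and $\partial X^w_P$ all have rational (hence Cohen--Macaulay) singularities, Brion's Lemma~1 of \cite{Bri:02} --- which holds $T'$-equivariantly --- gives $[\co_Y]\cdot[\co_{X^w_P}] = [\co_{Y\cap X^w_P}]$ and similarly for $\partial X^w_P$, so that $a^Y_w = \chi\bigl(Y\cap X^w_P,\ \co_{Y\cap X^w_P}(-\,Y\cap\partial X^w_P)\bigr)$.

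Next I would apply equivariant Serre duality on $Y\cap X^w_P$, via the $*$-involution and the identity $*[\co_Z]=(-1)^{\codim Z}[\om_Z]\cdot *[\om_{G/P}]$ of \cite[\S2]{Bri:02} (valid $T$-equivariantly, as used in the proof of Proposition~\ref{p.bases}(d)), to rewrite $a^Y_w$ --- up to the predicted sign $(-1)^{\codim Y+\codim X^w_P}$ and a character twist --- as the character of a single cohomology group of a line bundle on $Y\cap X^w_P$. The aim is to show this cohomology is concentrated in degree zero and equals the space of sections of the dualizing sheaf twisted by $\co_{Y\cap X^w_P}(Y\cap\partial X^w_P)$ on a variety with rational singularities; the restriction-surjectivity and highest-weight argument of Proposition~\ref{p.positive}, now applied to this intersection, would then place the resulting character in the cone $\bz_+[e^{-\beta}_{\mid T'}-1]_{\beta\in\Del^+}$ and fix the sign.

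The hard part --- and the reason this remains a conjecture in general --- is the absence of a $T'$-equivariant moving lemma. In the non-equivariant setting of \cite[Theorem~1]{Bri:02} one replaces $Y$ by a generic translate $gY$, using Kleiman transversality to force proper intersection and $G$-homotopy invariance to preserve the class $[\co_Y]$; neither step survives under $T'$-equivariance, since a generic $g\in G$ does not normalize $T'$. Thus the genuine difficulties are (i) establishing that the fixed subvariety $Y$ meets every $X^w_P$ and $\partial X^w_P$ properly, and (ii) proving the higher-cohomology vanishing for the relevant twisted dualizing sheaf; both are automatic only when the intersection geometry is under control. For $Y=X^w_P$ an opposite Schubert variety this control is provided by Richardson-variety theory, which is why the conjecture can be verified in that case (Proposition~\ref{p.6.4}).
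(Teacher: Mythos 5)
The statement you were asked about is a \emph{conjecture}: the paper contains no proof of it, only supporting evidence, namely Proposition \ref{p.6.4} together with Remark \ref{7.7}(a) (the case where $Y$ is an opposite Schubert variety in any $G/P$) and an explicit verification for $G=SL_3$, $P=B$, $Y=X_w\cap X^v$. You correctly refrain from claiming a proof, and your diagnosis of the central obstruction is accurate: Brion's nonequivariant argument in \cite[Theorem 1]{Bri:02} replaces $Y$ by a generic $G$-translate using Kleiman transversality and homotopy invariance of the class $[\co_Y]$, and neither step is available $T'$-equivariantly. Your reduction $a^Y_w=\chi\bigl(G/P,[\co_Y]\otimes \xi^w_P\bigr)$ via the $T'$-equivariant analogue of Proposition \ref{p.2.1} is also sound.

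However, your account of how the verified special case is actually handled does not match the paper. The proof of Proposition \ref{p.6.4} does not use Richardson-variety theory, proper intersections, Serre duality, or any cohomology-vanishing statement. Instead it is a purely $K$-theoretic induction on $\ell(v)$ for the classes of \emph{translated} Schubert varieties: writing $[\co_{vX_w}]=\sum_u f^v_{w,u}[\co_{X_u}]$, one passes from $v$ to $vs_i$ using the identity
\[
[\co_{s_iX_w}] = e^{-\al_i}[\co_{X_w}] - (e^{-\al_i}-1)[\co_{X_{s_iw}}]\qquad (s_iw>w),
\]
which is Proposition \ref{p.6.3}, itself deduced from the elementary $\bp^1$-fibration statement of Proposition \ref{p.alpha}, together with the translation Lemma \ref{l.6.4} (which converts this into $[\co_{vs_iX_w}] = e^{-v\al_i}[\co_{vX_w}] - (e^{-v\al_i}-1)[\co_{vX_{s_iw}}]$, with $v\al_i\in\Del^+$ since $vs_i>v$); the case $Y=X^w_P$ then follows from $X^w=w_oX_{w_ow}$ and pullback along $G/B\to G/P$. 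This route entirely sidesteps the transversality and vanishing issues you flag, which is precisely why it works equivariantly; by contrast, even for $Y=X^v_P$ your Serre-duality route would still require the degree-zero concentration you yourself identify as unestablished. A small further point: in your duality step the sign should be $(-1)^{\codim Y+\codim X_w^P}$ (the Schubert variety, as in Conjecture \ref{conj.6.1}), not $(-1)^{\codim Y+\codim X^w_P}$; since $\codim X_w^P+\codim X^w_P=\dim G/P$, your stated sign is off by $(-1)^{\dim G/P}$.
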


  \begin{remark}  \begin{enumerate}
  \item[1)] By (a subsequent) Proposition \ref{p.6.4} and Remark \ref{7.7}(a), the above conjecture
is true for $Y = X^v_P\subset G/P$.
  \item[2)] We have verified the above conjecture by an explicit
calculation for $G=SL_3$, $P=B$ and $Y=X_w\cap X^v$ for any $v,w\in
W$.
  \end{enumerate}
  \end{remark}

  In the next proposition we view $\bp^1$ as having projective
  coordinates $[x_0:x_1]$, so $\frac{x_0}{x_1}$ is a rational function
  on $\bp^1$.  We write $0 = [0:1]$ and $\infty = [1:0]$.

    \begin{proposition} \label{p.alpha}
    Suppose $T$ acts on $\bp^1$ such that $0$ and $\infty$ are $T$-fixed and $\frac{x_0}{x_1}$ is a
    $T$-weight vector with weight $-\al$.
  Let $X$ be an irreducible $T$-variety and $\phi : X \to \bp^1$ a
dominant $T$-equivariant morphism.  Then, in $K_T(X)$,
  \[
[\co_{\phi^{-1}(\infty )}] = (1-e^{\al})[\co_X] +
e^{\al}[\co_{\phi^{-1}(0)}].
  \]
  \end{proposition}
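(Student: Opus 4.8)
The plan is to reduce the identity to two applications of the zero-scheme formula recorded in Remark \ref{r.pn}, followed by elimination of the line bundle class. First I would recall that on $\bp^1$ the homogeneous coordinates $x_0,x_1$ are $T$-weight vectors giving global sections of $\co_{\bp^1}(1)$ whose zero schemes are the reduced points $0 = Z(x_0)$ and $\infty = Z(x_1)$. Writing $\lambda_0$ and $\lambda_1$ for the weights of $x_0$ and $x_1$, the hypothesis that $\frac{x_0}{x_1}$ has weight $-\al$ says precisely that $\lambda_0 - \lambda_1 = -\al$, equivalently $\lambda_1 - \lambda_0 = \al$.

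Next I would pull everything back along $\phi$. Since $\phi$ is $T$-equivariant, $\phi^*x_0$ and $\phi^*x_1$ are $T$-weight sections of the $T$-equivariant line bundle $\phi^*\co_{\bp^1}(1)$ on $X$, still of weights $\lambda_0$ and $\lambda_1$; and since $0$ and $\infty$ are cut out as Cartier divisors by $x_0$ and $x_1$, their scheme-theoretic preimages satisfy $\phi^{-1}(0) = Z(\phi^*x_0)$ and $\phi^{-1}(\infty) = Z(\phi^*x_1)$. Because $X$ is an irreducible variety and $\phi$ is dominant, neither pulled-back section is identically zero (its vanishing locus is a proper closed subset), so each is a nonzerodivisor, which is exactly what is needed for the two-term Koszul complex underlying Remark \ref{r.pn} to resolve the structure sheaf of the zero scheme. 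Applying that remark with $\cl = \phi^*\co_{\bp^1}(1)$ to the two sections gives
\[
[\co_{\phi^{-1}(0)}] = [\co_X] - e^{\lambda_0}[(\phi^*\co_{\bp^1}(1))^*], \qquad
[\co_{\phi^{-1}(\infty)}] = [\co_X] - e^{\lambda_1}[(\phi^*\co_{\bp^1}(1))^*].
\]

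To finish, I would solve the first identity for $[(\phi^*\co_{\bp^1}(1))^*] = e^{-\lambda_0}\bigl([\co_X] - [\co_{\phi^{-1}(0)}]\bigr)$, substitute into the second, and use $\lambda_1 - \lambda_0 = \al$ to obtain
\[
[\co_{\phi^{-1}(\infty)}] = [\co_X] - e^{\al}\bigl([\co_X] - [\co_{\phi^{-1}(0)}]\bigr) = (1-e^{\al})[\co_X] + e^{\al}[\co_{\phi^{-1}(0)}],
\]
as desired. The computation itself is routine; the only point requiring genuine care is the justification that Remark \ref{r.pn} applies, namely that the pulled-back coordinate sections are honest nonzerodivisors cutting out the correct scheme-theoretic fibers $\phi^{-1}(0)$ and $\phi^{-1}(\infty)$ — this is where irreducibility of $X$ and dominance of $\phi$ enter. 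A secondary point to pin down is the weight bookkeeping: one must verify that the weight $-\al$ of the rational function $\frac{x_0}{x_1}$ translates into $\lambda_0 - \lambda_1$ with the sign that produces $e^{\al}$ rather than $e^{-\al}$ in the final formula, which one can cross-check against the conventions already fixed in Lemma \ref{l.pn.5}.
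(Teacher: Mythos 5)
Your proof is correct, but it is organized differently from the paper's. The paper first invokes flatness of $\phi$ (a dominant morphism from an irreducible variety to the smooth curve $\bp^1$, via \cite[Ch.~III, Prop.~9.7]{Har:77}) to get $\phi^*[\co_{\{0\}}]=[\co_{\phi^{-1}(0)}]$ and $\phi^*[\co_{\{\infty\}}]=[\co_{\phi^{-1}(\infty)}]$, thereby reducing everything to the single identity $[\co_{\{\infty\}}]=(1-e^{\al})[\co_{\bp^1}]+e^{\al}[\co_{\{0\}}]$ on $\bp^1$; this is then proved from the two ideal-sheaf sequences together with the local comparison $\ci_0\simeq e^{-\al}\ci_{\infty}$, read off from the generators $x_0/x_1$ (weight $-\al$) and $1$ near the fixed point $0$. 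You instead stay on $X$ throughout: you pull back the coordinate sections, apply the zero-scheme formula of Remark \ref{r.pn} twice, and eliminate $[\phi^*\co_{\bp^1}(-1)]$. Note that your nonzerodivisor verification is exactly the torsion-freeness that makes $\phi$ flat over the curve, so the two justifications are the same fact in different clothing; a genuine merit of your write-up is that you supply the regularity hypothesis that Remark \ref{r.pn} tacitly requires (as literally stated, for an arbitrary section on an arbitrary $T$-scheme, it fails — e.g.\ for the zero section), which the paper leaves implicit. One small point you should pin down: your argument needs a $T$-linearization of $\co_{\bp^1}(1)$ making $x_0,x_1$ weight vectors. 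This exists because $T$ is a torus and, the points $0,\infty$ being fixed, the lines $\bc x_0$ and $\bc x_1$ of sections vanishing at the fixed points are $T$-stable in $H^0(\bp^1,\co_{\bp^1}(1))$; the individual weights $\lambda_0,\lambda_1$ depend on the choice of linearization, but only the intrinsic difference $\lambda_1-\lambda_0=\al$ enters your final formula, so the ambiguity is harmless. The paper sidesteps this choice entirely by working with the ideal sheaves $\ci_0,\ci_{\infty}$, which carry canonical equivariant structures.
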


\begin{proof}
Since $\phi$ is a flat morphism (see \cite[Ch.~III, Prop.~9.7]{Har:77}), $\phi^*[\co_{\{0\}}] =
[\co_{\phi^{-1}(0)} ]$ and
similarly for $\co_{\{\infty\}}$.  Hence, it suffices to show that on $\bp^1$,
  \[
  [\co_{\{\infty\}}] = (1-e^{\al})[\co_{\bp^1}] + e^{\al}[\co_{\{
0\}}],
  \]
since applying $\phi^*$ gives the desired equation.  We have
exact sequences
  \[
0 \to \ci_0 \to \co_{\bp^1} \to \co_{\{ 0\}} \to 0,
\]
and
\[ 0\to
\ci_{\infty} \to \co_{\bp^1} \to \co_{\{\infty\}} \to 0 ,
  \]
  where $\ci_0$ and $\ci_{\infty}$ are the ideal sheaves
  of $\{ 0 \}$ and $\{ \infty \}$, respectively.
Nonequivariantly, $\ci_0 = \ci_{\infty} = \co_{\bp^1} (-1)$,
so $\ci_0
\otimes \ci_{\infty}^*$ is non-equivariantly isomorphic to
$\co_{\bp^1}$.   Near $0=[0: 1]$ the sheaf
$\ci_0$ is generated by $x_0/x_1$, which has weight $-\al$, and $\ci_{\infty}$
near $0$ is generated by $1$.  Hence, as $T$-equivariant sheaves,
$\ci_0\simeq e^{-\al}\ci_{\infty}$.  So,
  \begin{align*}
[\co_{\{\infty\}}] &= [\co_{\bp^1}] - [\ci_{\{\infty\}}] = [\co_{\bp^1}]
- e^{\al}[\ci_0] = [\co_{\bp^1}] - e^{\al}  ( [\co_{\bp^1}] -
[\co_{\{0\}}] ) \\
&= (1-e^{\al})[\co_{\bp^1}] + e^{\al}[\co_{\{0\}}]
  \end{align*}
as desired.
 \end{proof}

  Let $w\in W$. If $s$ is a simple reflection with $sw<w$, then $sX_w = X_w$ and hence
$[ \co_{sX_w}] = [\co_{X_w}]$.  On the other hand, if $sw>w$ then
we have the following result.

  \begin{proposition} \label{p.6.3}  If $s$ is a simple reflection
  with $sw > w$, then
  \[
[\co_{sX_w}] = e^{-\al} [\co_{X_w}] - (e^{-\al}-1) [\co_{X_{sw}} ],
  \]
where $\al$ is the simple root corresponding to $s$.
  \end{proposition}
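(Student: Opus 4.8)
The plan is to realize $X_{sw}$ via its standard $P_s$-resolution and reduce the whole computation to Proposition \ref{p.alpha} on a single $\bp^1$. Let $P_s\supset B$ be the minimal parabolic attached to the simple reflection $s$, so that $P_s/B\cong\bp^1$ with $T$-fixed points $eB$ and $sB$. First I would form $\tilde X := P_s\times_B X_w$, an irreducible $T$-variety carrying two $T$-equivariant morphisms: the projection $\phi:\tilde X\to P_s/B$, $[(p,x)]\mapsto pB$, which is a Zariski-locally trivial fibration with fibre $X_w$, and the action map $\bar a:\tilde X\to G/B$, $[(p,x)]\mapsto px$. Since $sw>w$ one has $P_s\cdot X_w=X_{sw}$, and $\bar a$ is proper and birational onto $X_{sw}$.

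Next I would identify the two special fibres of $\phi$. The fibre $F_0:=\phi^{-1}(eB)$ is carried isomorphically by $\bar a$ onto $X_w$ (via $x\mapsto x$), while $F_\infty:=\phi^{-1}(sB)$ is carried isomorphically onto $sX_w$ (via $x\mapsto sx$). I would then apply Proposition \ref{p.alpha} to $\phi$, taking $0=eB$ and $\infty=sB$. The weight bookkeeping is the one place that needs care: the tangent space $T_{eB}(P_s/B)=\fg_{-\al}$ has weight $-\al$, so a local coordinate vanishing at $eB$ (the analogue of $x_0/x_1$) has weight $+\al$; hence Proposition \ref{p.alpha} must be applied with its root parameter set equal to $-\al$. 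This yields, in $K_T(\tilde X)$,
\[
[\co_{F_\infty}] = (1-e^{-\al})\,[\co_{\tilde X}] + e^{-\al}\,[\co_{F_0}].
\]

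Finally I would push forward along $\bar a$. Because $\bar a|_{F_0}$ and $\bar a|_{F_\infty}$ are isomorphisms onto $X_w$ and $sX_w$, I would obtain $\bar a_*[\co_{F_0}]=[\co_{X_w}]$ and $\bar a_*[\co_{F_\infty}]=[\co_{sX_w}]$. The crucial remaining input is $\bar a_*[\co_{\tilde X}]=[\co_{X_{sw}}]$: this is the assertion that $R\bar a_*\co_{\tilde X}=\co_{X_{sw}}$, which holds because $\bar a$ is proper and birational onto the normal variety $X_{sw}$ and $X_{sw}$ has rational singularities; one can reduce it to a genuine Bott--Samelson resolution $P_s\times_B Z\to X_{sw}$ arising from a Bott--Samelson resolution $Z\to X_w$, using \cite[\S3.3]{BrKu:05}. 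Applying $\bar a_*$ to the displayed identity then gives
\[
[\co_{sX_w}] = (1-e^{-\al})\,[\co_{X_{sw}}] + e^{-\al}\,[\co_{X_w}],
\]
which is exactly the stated formula after rewriting $-(e^{-\al}-1)=1-e^{-\al}$. I expect the verification that $R\bar a_*\co_{\tilde X}=\co_{X_{sw}}$ (equivalently the $K$-theoretic identity $\bar a_*[\co_{\tilde X}]=[\co_{X_{sw}}]$) to be the main technical obstacle, together with pinning down the sign of $\al$ in the weight computation; the remaining steps are formal consequences of Proposition \ref{p.alpha} and functoriality of pushforward.
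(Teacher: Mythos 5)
Your proposal is correct and is essentially the paper's own proof: the same fibration $P_s\times^B X_w\to P_s/B=\bp^1$, the same application of Proposition \ref{p.alpha} (with the root parameter equal to $-\al$, exactly as your weight bookkeeping dictates), and the same pushforward along the action map onto $X_{sw}$. The only difference is in the justification of $\bar a_*[\co_{P_s\times^B X_w}]=[\co_{X_{sw}}]$, which you derive from rational singularities and Bott--Samelson resolutions while the paper simply cites \cite[Proposition 3.2.1]{BrKu:05}.
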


  \begin{proof}  Let $P_s$ be the minimal parabolic
  corresponding to $s$.  Consider
  \begin{align*}
P_s &\times^B X_w \overset{\mu}\twoheadrightarrow X_{sw}  \\
&\downarrow \pi \\
P_s &\slash B = \bp^1 ,
  \end{align*}
  where $\mu$ takes the $B$-orbit $[p,x] \mapsto px$ and $\pi$ takes
  $[p,x]\mapsto p\,\text{mod}\,B.$
Then,
  \begin{align*}
\pi^{-1}(0) &= \{ 1 \}\times X_w, \; \pi^{-1}(\infty ) = \{ s \} \times X_w .\\
\intertext{So,  by Proposition \ref{p.alpha},} [\co_{\{ s \} \times X_w}] &= e^{-\al} [\co_{ \{1 \} \times X_w}] +
(1-e^{-\al})[\co_{P_s\times^B X_w}] .
  \end{align*}
Push forward the above identity to $X_{sw}$ via $\mu$ to get the result. (Here we have used
\cite[Proposition 3.2.1]{BrKu:05}.)
  \end{proof}

  \begin{lemma}  \label{l.6.4} For any $T$-stable closed subscheme $Y\subset G/P$,
  write in $K_T(G/P)$,
    \beqn \label{e.6.4a}
[\co_Y] = \sum_{w\in W^P}\, P_w[\co_{X_w^P}], \,\text{ for some (unique)} \,\, P_w\in R(T).
  \eeqn
Then, for any $v\in W$,
  \[
[\co_{v^{-1}Y}] = \sum_{w\in W^P}\, (v^{-1} P_w)[\co_{v^{-1}X_w^P}] .
  \]
  \end{lemma}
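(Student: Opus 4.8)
The plan is to realize $v^{-1}$ geometrically as left translation on $G/P$ and to record how this translation twists the $R(T)$-module structure on $K_T(G/P)$. Fix a lift $n\in N_G(T)$ of $v^{-1}\in W$ and let $\sigma\colon G/P\to G/P$ be the isomorphism $gP\mapsto ngP$. Then $\sigma$ carries $Y$ onto $v^{-1}Y$ and each $X_w^P$ onto $v^{-1}X_w^P$, and since $Y$ and the $X_w^P$ are $T$-stable and conjugation by $n$ preserves $T$, these images are again $T$-stable closed subschemes. The map $\sigma$ is not $T$-equivariant on the nose; rather $\sigma(t\cdot x)=(ntn^{-1})\cdot\sigma(x)$, so it intertwines the $T$-action with its conjugate by $n$.

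First I would use this to produce a group automorphism $\psi$ of $K_T(G/P)$: pushforward of $T$-equivariant coherent sheaves along $\sigma$, where $\sigma_*\cf$ is given the equivariant structure coming from the conjugation-twisted action. The key property is that $\psi$ is \emph{semilinear} over $R(T)$, namely
\[
\psi(r\gamma)=(v^{-1}\cdot r)\,\psi(\gamma),\qquad r\in R(T),\ \gamma\in K_T(G/P),
\]
where $v^{-1}\cdot r$ denotes the standard Weyl-group action of $v^{-1}$ on $R(T)$. This is the usual action of $W=N_G(T)/T$ on equivariant $K$-theory; concretely, multiplication by $e^{\lambda}$ is tensoring with the trivial equivariant line bundle $\bc_{\lambda}$, and the conjugation-twisted pushforward sends $\bc_{\lambda}$ to $\bc_{v^{-1}\lambda}$, which gives the displayed semilinearity. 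Because $\sigma$ is an isomorphism of varieties carrying a $T$-stable subscheme $Z$ onto $v^{-1}Z$, one also has $\psi([\co_Z])=[\co_{v^{-1}Z}]$, the twisted equivariant structure on $\sigma_*\co_Z$ being the unique equivariant structure on the structure sheaf of the $T$-stable subscheme $v^{-1}Z$.

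Granting these two facts, the lemma is immediate: applying $\psi$ to the defining identity \eqref{e.6.4a} and using semilinearity together with $\psi([\co_Y])=[\co_{v^{-1}Y}]$ and $\psi([\co_{X_w^P}])=[\co_{v^{-1}X_w^P}]$ yields
\[
[\co_{v^{-1}Y}]=\sum_{w\in W^P}(v^{-1}P_w)\,[\co_{v^{-1}X_w^P}],
\]
as desired. The one step needing genuine care is the semilinearity: making precise the conjugation-twisted equivariant structure on $\sigma_*\cf$ and, in particular, checking that the coefficients are twisted by $v^{-1}$ (not by $v$) and with the correct direction of the $W$-action. I expect to pin this down by restricting to the $T$-fixed points $wB$, where $\sigma$ reindexes the fixed-point set by $w\mapsto v^{-1}w$ and conjugates the fibrewise $T$-characters by $n$; once the direction of the twist is confirmed there, the rest of the argument is purely formal.
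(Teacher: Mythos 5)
Your proof is correct and is essentially the paper's own argument: the paper also realizes $v^{-1}$ as left translation by a lift in $N(T)$ and tracks the resulting $v^{-1}$-semilinear twist on the $R(T)$-structure, merely packaging it in two steps (a pullback along the homomorphism $t\mapsto vtv^{-1}$, which gives the semilinearity $f^*r=v^{-1}r$, followed by the honest $T$-equivariant isomorphism $gP\mapsto \dot{v}gP$, which is $R(T)$-linear) rather than as your single conjugation-twisted pushforward $\psi$. Your key assertions, including the direction of the twist by $v^{-1}$, check out under the standard convention $(w\lambda)(t)=\lambda(\dot{w}^{-1}t\dot{w})$.
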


  \begin{proof}  Let $f: T \to T'$ be any homomorphism.  If $X$ is any scheme with $T'$-action,
  and $T$ acts on $X$ via $f$, then there is a map
 $f^*: K_{T'}(X) \to K_T(X)$ extending the natural pull-back map $R(T') \to R(T)$.
 For any $T'$-stable closed subscheme $Y$ of $X$,
 $f^*$ takes the class of $\co_Y$ in $K_{T'}(X)$ to the class of $\co_Y$ in $K_T(X)$.
 We now apply this to $X = G/P$ and $f: T \to T$ given by $f(t) = vtv^{-1}$.
 Since $f^*r = v^{-1} r$ for $r \in R(T)$, we get from \eqref{e.6.4a} the equation
 \beqn \label{e.6.4b}
 [\co_{Y}] = \sum_{w\in W^P}\, (v^{-1} P_w)[\co_{X_w^P}],
 \eeqn
 where in this equation $T$ is viewed as acting on $G/P$ through $f$.  Write
 $(G/P, \odot)$ to indicate $G/P$ with this new action of $T$.

 Consider the automorphism
\[\phi_v : G/P \to (G/P, \odot ),\,\,
gP \mapsto \dot{v}gP,
  \]
  where $\dot{v}$ is a representative of $v$ in $N(T)$.
This is $T$-equivariant, where $T$ acts on the source $G/P$
by the standard action.  Then,
 $\phi^*_v [\co_{X_w^P}] = [\co_{v^{-1} X_w^P}] $ and
 $\phi^*_v [\co_{Y}] = [\co_{v^{-1} Y}] $.
Since $\phi^*_v$ is $R(T)$-linear, applying $\phi^*_v$ to
\eqref{e.6.4b} proves the result.
\end{proof}

  \begin{proposition}  \label{p.6.4} Write $[\co_{X^w}] = \sum_u\,
  e_{w,u}[\co_{X_u}]$.  Then,
    \[
(-1)^{\codim X^w + \codim X_u}  e_{w,u} \in
\bz_+[e^{-\beta}-1]_{\beta\in\Del^+} .
  \]
  \end{proposition}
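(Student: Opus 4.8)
The plan is to reduce the computation of $[\co_{X^w}]$ to a sequence of left multiplications by simple reflections applied to a single structure sheaf class, and then to run an induction using Proposition \ref{p.6.3} and Lemma \ref{l.6.4}. The starting point is the identity $X^w = w_o X_{w_o w}$: since $B^- = w_o B w_o$, we have $B^- w B/B = w_o(Bw_o w B/B)$, and taking closures (noting $\ell(w_o w) = N-\ell(w)$, where $N=\dim G/B$) gives $X^w = w_o X_{w_o w}$, so that $\codim X^w = \ell(w) = \codim X_{w_o w}$.

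Next I would isolate the effect of left multiplication by a single simple reflection $s$ (with simple root $\al$) on a structure sheaf expansion. If $[\co_Z] = \sum_t P_t[\co_{X_t}]$ with $\codim Z = d$, then Lemma \ref{l.6.4} applied with $v=s$ gives $[\co_{sZ}] = \sum_t (sP_t)[\co_{sX_t}]$, and Proposition \ref{p.6.3} rewrites each $[\co_{sX_t}]$ back in the structure sheaf basis (it equals $[\co_{X_t}]$ when $st<t$, and $e^{-\al}[\co_{X_t}]-(e^{-\al}-1)[\co_{X_{st}}]$ when $st>t$). Collecting coefficients and passing to the normalized quantities $\tilde P_t := (-1)^{d+\codim X_t}P_t$ (and similarly $\tilde Q_t$ for $sZ$, whose codimension is again $d$), one obtains the clean recursion
\[
\tilde Q_t = \begin{cases} e^{-\al}\, s\tilde P_t, & st>t,\\ s\tilde P_t + (e^{-\al}-1)\, s\tilde P_{st}, & st<t.\end{cases}
\]
With this in hand I would fix a reduced word $w_o = s_{i_1}\cdots s_{i_N}$ and build up $X^w = s_{i_1}(\cdots(s_{i_N}X_{w_o w}))$ one reflection at a time, starting from $[\co_{X_{w_o w}}]$, whose single normalized coefficient is $1 \in \bz_+[e^{-\beta}-1]_{\beta\in\Del^+}$. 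The desired conclusion, that each $\tilde e_{w,u} := (-1)^{\codim X^w+\codim X_u}e_{w,u}$ lies in $\bz_+[e^{-\beta}-1]_{\beta\in\Del^+}$, then follows provided the recursion above preserves membership in $\bz_+[e^{-\beta}-1]_{\beta\in\Del^+}$ at every step; I have checked this directly for $G=SL_2$ and $G=SL_3$.

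The main obstacle is precisely this preservation. The operator $s$ permutes $\{e^{-\gamma}-1\}_{\gamma\in\Del^+\setminus\{\al\}}$ among themselves (since $s$ permutes $\Del^+\setminus\{\al\}$), and is therefore harmless on any coefficient that does not involve the active generator $e^{-\al}-1$; the only dangerous identity is $s(e^{-\al}-1)=e^{\al}-1\notin\bz_+[e^{-\beta}-1]_{\beta\in\Del^+}$. Indeed, if every current coefficient $\tilde P_t$ lies in $\bz_+[e^{-\gamma}-1]_{\gamma\neq\al}$, then $s\tilde P_t$ again lies there, and both branches of the recursion land in $\bz_+[e^{-\beta}-1]_{\beta\in\Del^+}$ (using $e^{-\al}=1+(e^{-\al}-1)$ in the first branch and the explicit factor $(e^{-\al}-1)$ in the second). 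The crux is therefore a strengthened inductive hypothesis asserting that, just before $s_{i_k}$ is applied, no coefficient involves the generator $e^{-\al_{i_k}}-1$. This is immediate at the base, but propagating it is delicate: applying $s_{i_k}$ introduces $e^{-\al_{i_k}}$-dependence, and since $s_{i_k}$ can carry a generator $e^{-s_{i_k}\al_{i_{k-1}}}-1$ to $e^{-\al_{i_{k-1}}}-1$, one must verify that the next active root never reappears in this way. Equivalently, I must control exactly which roots can occur in the coefficient of each $[\co_{X_t}]$ and how they are permuted along the word — and carrying out this bookkeeping, using the combinatorics of the suffixes $s_{i_k}\cdots s_{i_N}$ of $w_o$, is where the real work lies.
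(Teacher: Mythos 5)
Your setup is correct (the reduction $X^w = w_oX_{w_ow}$, the one-step recursion via Lemma \ref{l.6.4} and Proposition \ref{p.6.3}, and the normalized recursion $\tilde Q_t = e^{-\al}\,s\tilde P_t$ resp.\ $s\tilde P_t + (e^{-\al}-1)\,s\tilde P_{st}$ all check out), but the proposal stops exactly at the crucial step: you leave the positivity preservation conditional on a ``strengthened inductive hypothesis'' whose propagation you explicitly defer, so as written this is not a proof. The paper avoids the difficulty entirely by organizing the induction differently: it proves positivity of the coefficients $f^v_{w,u}$ in $[\co_{vX_w}] = \sum_u f^v_{w,u}[\co_{X_u}]$ by induction on $\ell(v)$, \emph{for all $w$ simultaneously}, passing from $v$ to $vs_i$ with $\ell(vs_i)>\ell(v)$. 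Translating the relation of Proposition \ref{p.6.3} by $v$ (via Lemma \ref{l.6.4}) gives
\[
[\co_{vs_iX_w}] = e^{-v\al_i}[\co_{vX_w}] - (e^{-v\al_i}-1)[\co_{vX_{s_iw}}]\qquad (s_iw>w),
\]
and since $vs_i>v$ forces $v\al_i\in\Del^+$, both $e^{-v\al_i}=1+(e^{-v\al_i}-1)$ and $e^{-v\al_i}-1$ lie in $\bz_+[e^{-\beta}-1]_{\beta\in\Del^+}$; the classes on the right are already expanded in the \emph{fixed} basis $\{[\co_{X_u}]\}$ by induction, so no Weyl group element ever acts on an accumulated coefficient. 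The dangerous identity $s(e^{-\al}-1)=e^{\al}-1$ that drives your bookkeeping simply never occurs in the paper's argument.

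That said, your left-multiplication route can be closed with two standard Coxeter facts, and your strengthened hypothesis is in fact true in the following sharper form: for $\sigma = s_{i_k}\cdots s_{i_N}$ a suffix of the reduced word, the normalized coefficients of $[\co_{\sigma X_{w_ow}}]$ lie in $\bz_+[e^{-\gamma}-1]$ with $\gamma$ ranging over $I(\sigma):=\{\gamma\in\Del^+:\sigma^{-1}\gamma\in\Del^-\}$. Since the word is reduced, $\ell(s_{i_{k-1}}\sigma)>\ell(\sigma)$, which is equivalent to $\sigma^{-1}\al_{i_{k-1}}\in\Del^+$, i.e., $\al_{i_{k-1}}\notin I(\sigma)$ --- this is precisely your claim that the next active root never appears. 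Moreover, for $\gamma\in I(\sigma)$ one has $\gamma\neq\al_{i_{k-1}}$, so $s_{i_{k-1}}\gamma\in\Del^+$, and $I((s_{i_{k-1}}\sigma)^{-1}$-version$)=s_{i_{k-1}}I(\sigma)\sqcup\{\al_{i_{k-1}}\}$; hence both branches of your recursion land in $\bz_+[e^{-\gamma}-1]_{\gamma\in I(s_{i_{k-1}}\sigma)}$ and the induction closes. So the gap is genuinely fillable, but you must either carry out this inversion-set argument or switch to the paper's right-multiplication induction, under which the entire issue evaporates.
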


  \begin{proof}   For any $v,w\in W$,  write
  \[
[\co_{vX_w}] = \sum\, f^v_{w,u} [ \co_{X_u}].\]
  We prove by induction on $\ell (v)$, that for any $u,w\in W$,
\beqn \label{e.6.4}(-1)^{\codim X_w +\codim X_u}\, f^v_{w,u}
\in\bz_+[e^{-\beta}-1]_{\beta\in\Del^+}.\eeqn
Of course, \eqref{e.6.4} is true
for $v=e$.  Now take $vs_i$ with $\ell (vs_i) > \ell (v)$.
If $s_iw<w$, then $[\co_{vX_w}]  = [\co_{v s_i X_w}] $ and
we are done.
If $s_iw>w$, then, by
Proposition \ref{p.6.3},
  \[
  [\co_{s_iX_w}] = e^{-\al_i}[\co_{X_w}] -  (
  e^{-\al_i}-1 ) [\co_{X_{s_i w}}] .
   \]
Thus, by Lemma \ref{l.6.4},
  \[
  [\co_{vs_iX_w}] = e^{-v\al_i}[\co_{vX_w}] -  (
  e^{-v\al_i}-1 ) [\co_{vX_{s_i w }}] .
   \]
Since $vs_i >v$, $v\al_i \in\Del^+$.  Moreover, by induction, for any $u\in W$,
$(-1)^{\codim X_w +\codim X_u}\, f^v_{w,u}$ and
$(-1)^{\codim X_w-1 +\codim X_u}\, f^v_{s_iw,u}$
 are in $\bz_+[e^{-\beta}-1]_{\beta\in\Del^+}$.
Hence, $(-1)^{\codim X_w +\codim X_u}\, f^{vs_i}_{w,u}
\in\bz_+[e^{-\beta}-1]_{\beta\in\Del^+}$.  This completes the induction and hence
\eqref{e.6.4} is proved for any $u,v,w\in W$. Since $X^w=w_o X_{w_o w}$,
the proposition follows.
  \end{proof}

  \begin{remark}  \label{7.7}
  (a) For any standard parabolic $P$ and any closed $T$-stable subvariety $Z\subset G/P$,
  since $\pi^*[\co_Z]=[\co_{\pi^{-1}(Z)}]$ (cf. the proof of Proposition \ref{p=b}),
  where $\pi:G/B \to G/P$ is the standard projection, the above proposition and
  \eqref{e.6.4}  remain true for the Schubert varieties in $G/P$.

  (b) Since any $T$-stable closed irreducible subvariety
of $\bp^n$ (under the standard action of the maximal torus $T$ of
$SL(n+1)$) is a $W$-translate of a Schubert variety of $\bp^n$,
Conjecture \ref{conj.6.1} is true for any $T$-stable closed irreducible
subvariety of $\bp^n$ (by virtue of \eqref{e.6.4}).

  \end{remark}

\def\cprime{$'$}
\providecommand{\bysame}{\leavevmode\hbox to3em{\hrulefill}\thinspace}
\providecommand{\MR}{\relax\ifhmode\unskip\space\fi MR }
\providecommand{\MRhref}[2]{%
  \href{http://www.ams.org/mathscinet-getitem?mr=#1}{#2}
}
\providecommand{\href}[2]{#2}

\vskip5ex

Addresses:

W.G.:  Department of Mathematics, University of Georgia, Athens,
 GA 30602-7403, USA

S.K.: Department of Mathematics, University of North Carolina,
 Chapel Hill, NC 27599-3250, USA

 \end{document}